\numberwithin{equation}{section}
\newcommand{\leftcontour}[2]{{\tiny #1}\,\raisebox{-0.25em}{\includegraphics[scale=0.035]{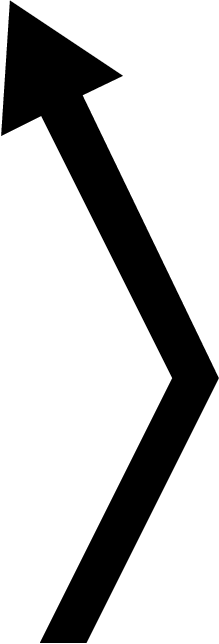}}\,{\tiny #2}}
\newcommand{\rightcontour}[2]{{\tiny #1}\,\raisebox{-0.25em}{\includegraphics[scale=0.035]{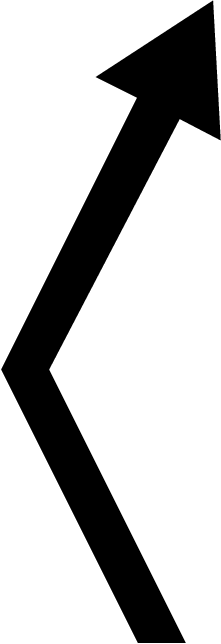}}\,{\tiny #2}}
\newcommand{\Pb}{\mathbb{P}}
\newcommand{\dx}{\mathrm{d}}
\newcommand{\E}{\mathbb{E}}
\newcommand{\I}{\mathrm{i}}
\newcommand{\R}{\mathbb{R}}
\newcommand{\Z}{\mathbb{Z}}
\newcommand{\C}{\mathbb{C}}
\newcommand{\Id}{\mathbbm{1}}
\newcommand{\Ai}{\mathrm{Ai}}
\newcommand{\kai}{K_{{\rm Ai}}}
\newcommand{\Or}{{\mathcal{O}}}
\newcommand{\e}{\varepsilon}
\newcommand{\Exp}{{\rm Exp}}
\newcommand{\Geom}{{\rm Geom}}
\newcommand{\inner}[1]{\langle #1\rangle}
\newcommand{\bfa}{\mathbf{a}}
\newcommand{\bfb}{\mathbf{b}}
\newcommand{\bfr}{\mathbf{r}}
\newcommand{\bfR}{\mathbf{R}}
\newcommand{\bfx}{\mathbf{x}}
\newcommand{\bfy}{\mathbf{y}}
\newcommand{\bma}{\bm\alpha}
\newcommand{\bmb}{\bm\beta}
\newcommand{\lpp}{L^{\bma,\bmb}}
\newcommand{\wt}[1]{\widetilde{#1}}
\newcommand{\wh}[1]{\widehat{#1}}
\newcommand{\mci}{\mathcal I}
\newcommand{\kgeo}{K^{{\rm geo}}}
\newcommand{\kexp}{K^{{\rm exp}}}
\newcommand{\kexpresc}{K^{{\rm exp, resc}}}
\newtheorem{prop}{Proposition}[section]
\newtheorem{thm}[prop]{Theorem}
\newtheorem{lem}[prop]{Lemma}
\newtheorem{defin}[prop]{Definition}
\newtheorem{cor}[prop]{Corollary}
\newtheorem{rem}[prop]{Remark}
\title{Critical fluctuations of last passage\\ percolation with thick boundaries}
\author{Elnur Emrah\thanks{University College Dublin, School of Mathematics and Statistics, Dublin, Ireland.\\ Email: {\tt elnur.emrah@ucd.ie}}
\and Patrik L. Ferrari\thanks{Bonn University, Institute for Applied Mathematics, Bonn, Germany. Email: {\tt ferrari@uni-bonn.de}}
\and Min Liu\thanks{Bonn University, Institute for Applied Mathematics, Bonn, Germany. Email: {\tt liu@iam.uni-bonn.de}}}%}
\date{July 20, 2026}
\begin{document}

\maketitle

\begin{abstract}
We consider the exponential last passage percolation (LPP) with thick two-sided boundary that consists of a few inhomogeneous columns and rows.
Ben Arous and Corwin previously studied the limit fluctuations in this model except in a critical regime, for which they predicted that the limit distribution exists and depends only on the most dominant columns and rows. In this article, we prove their conjecture and identify the limit distribution explicitly in terms of a
Fredholm determinant of a $2 \times 2$ matrix kernel. This result leads in particular to an explicit variational formula for the one-point
marginal of the KPZ fixed point for a new class of initial conditions. Our limit distribution is also a novel generalization of and provides a
new numerically efficient representation for the Baik--Rains distribution.
\end{abstract}

\maketitle

\section{Introduction}

\subsection{Exponential LPP with thick two-sided boundary}

Our first motivation in this article is to address a conjecture of Ben Arous and Corwin concerning the limit fluctuations of the exponential last passage percolation (LPP) with thick two-sided boundary conditions (see (51) of~\cite{BC09}). To describe the model, we consider two vectors $\bma = (\alpha_1, \dotsc, \alpha_\ell) \in \R^\ell$ and $\bmb = (\beta_1, \dotsc, \beta_k) \in \R^k$ with components of the form
\begin{align}
\label{E:Param}
\alpha_i = \wt{\alpha}_i + \frac{x_i}{(16N)^{1/3}} \quad \text{ and } \quad \beta_j = \wt{\beta}_j + \frac{y_j}{(16N)^{1/3}},
\end{align}
where $k, \ell \in \Z_{>0}$, $\wt{\alpha}_i, \wt{\beta}_j > -1/2$, $x_i, y_j \in \R$ are fixed, and the parameter $N \in \Z_{>0}$ is large enough so that $\alpha_i, \beta_j > -1/2$ as well. Let $(\omega_{i,j})$ be a family of independent random weights indexed by the lattice $\{-\ell+1, -\ell+2, \dotsc\} \times \{-k+1, -k+2, \dotsc\}$ with
\begin{equation}
\label{E:ThickBd}
		\omega_{i,j} \sim
		\begin{cases}
			\Exp(1) & \text{if } i > 0 \text{ and } j > 0, \\
			\Exp\left(\frac{1}{2} + \alpha_{i+\ell}\right) & \text{if } i \leq 0 \text{ and } j  > 0, \\
			\Exp\left(\frac{1}{2} + \beta_{j+k} \right) & \text{if } i > 0 \text{ and } j \leq 0, \\
			0 & \text{if } i \leq 0 \text{ and } j\leq 0.
		\end{cases}
\end{equation}
In particular, the weights on the $\ell \times k$ corner block $\{-\ell+1, \dotsc, 0\} \times \{-k+1, \dotsc, 0\}$ are zero. We think of the first integer quadrant $\Z_{>0}^2$ as the \emph{bulk}, and  the first $k$ rows and the first $\ell$ columns of the above lattice as the \emph{thick boundary} where the exponential rates can differ from the bulk rate $1$, see Figure~\ref{F:2Slpp}.
We then define, for each $(i,j),(m,n)\in\Z^2$ with $m \ge i > -\ell$ and $n \ge j > -k$, the associated last passage time by
\begin{equation}\label{estp}
		L^{\bma, \bmb}_{(i,j) \to (m,n)} := \max_{\pi:(i,j)\to(m,n)} \sum_{(r,s) \in \pi} \omega_{r,s},
	\end{equation}
where the maximum runs over all up-right paths from $(i, j)$ to $(m, n)$. In the case $(i,j) = (-k+1,-\ell+1)$, we abbreviate the left-hand side of \eqref{estp} as $L^{\bma, \bmb}_{m,n}$.

\begin{figure}
		\centering
\begin{overpic}[scale = 0.8]{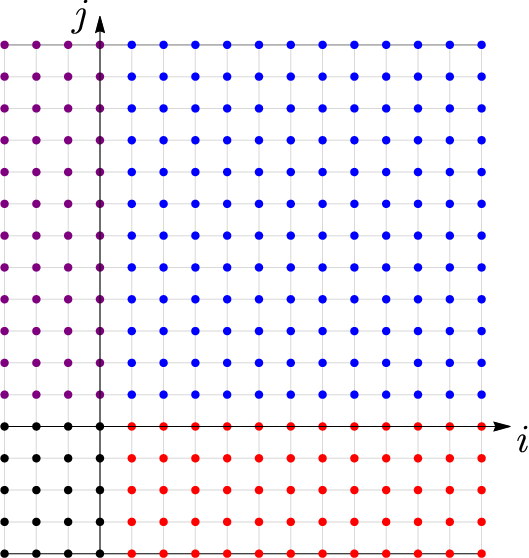}
\end{overpic}
\caption{Illustration of the weights of the exponential LPP with thick two-sided boundary conditions for $k = 5$ and $\ell = 4$. The bulk points (blue) have $ \Exp(1)$ weights. The points on the $\ell \times k$ corner block (black) have zero weights. The remaining points on the thick boundary have exponentially distributed weights with inhomogeneous rates that equal  $1/2+\alpha_i$ on column $i-\ell$ (purple) and $1/2+\beta_j$ on row $j-k$ (red).}
		\label{F:2Slpp}
\end{figure}

\subsection{Background: Pr\"ahofer--Spohn conjecture}

To explain the origin of the preceding model, we revisit the Pr\"ahofer--Spohn conjecture \cite{PS01} that was eventually settled in \cite{BC09}. Let us consider for now the basic case of the model with $k = \ell = 1$ and $x_1 = y_1 = 0$. We also assume that $\alpha = \alpha_1 \le 1/2$ and $\beta = \beta_1 \le 1/2$. Then the last passage time $L_{m, n}^{\alpha, \beta}$ is closely related in distribution to the current in the totally asymmetric simple exclusion process (TASEP) with two-sided Bernoulli initial conditions \cite{PS01, BCS06, BC09}. More specifically, at time zero, each site $i \in \Z$ is occupied independently with probability $\rho^- = 1/2 + \alpha$ if $i \le 0$ and with probability $\rho^+ = 1/2 -\beta$ if $i > 0$. This implies that the initial particle density is $\rho^-$ to the left of the origin and $\rho^+$ to the right. When $\rho^- = \rho^+ = \rho$, one obtains the stationary initial condition for the TASEP.  In the article \cite{PS01}, Pr\"{a}hofer and Spohn predicted the limiting current fluctuations for each choice of $\rho^-$ and $\rho^+$, relying in part on the analogous results of Baik and Rains \cite{BR00} for the polynuclear growth model (PNG), and also on a result of Johansson \cite{Jo00b} for the step initial condition where $\rho^- = 1$ and $\rho^+ = 0$. Translated to the LPP language, the Pr\"ahofer--Spohn conjecture asserts the limit distribution of suitably rescaled $L_{m, n}^{\alpha, \beta}$ as the point $(m, n)$ tends to infinity in a fixed direction $r$, which can be defined as the limit of the ratio $m/n$. The limit exhibited an interesting phase diagram with respect to the parameters $\rho^+$, $\rho^-$ and $r$, and it became a significant problem to verify this picture rigorously due to the role of the TASEP as a prototypical nonreversible interacting particle system.

The proof of the Pr\"ahofer--Spohn conjecture spanned several works. In the stationary case and for any direction $r \neq r_c = \rho^{-2}(1-\rho)^{2}$, it was already known that the fluctuations are governed by the central limit theorem \cite{FF94}. In this case, one side of the boundary completely determine the fluctuation behavior. For $\rho^- = 1$ and $\rho^+ = 0$, there is a rarefaction fan and, for any $r$, the limit is the Tracy--Widom GUE distribution~\cite{TW94} as proved in \cite{Jo00b}. For the stationary case, in the critical direction $r_c$, both the bulk and the two boundaries have non-trivial contributions in the limit distribution, which is the Baik--Rains distribution~\cite{BR00} as shown in \cite{FS05a}. In a subsequent work \cite{BBP06}, Baik, Ben Arous and P\'{e}ch\'{e} resolved the conjecture for $\rho^+ = 0$ and arbitrary $\rho^-$; see also \cite{NS04}. Using the results of \cite{BBP06} as inputs, Ben Arous and Corwin completed the proof of the conjecture in \cite{BC09}.

\subsection{A conjecture of Ben Arous and Corwin}

As the authors noted, the probabilistic approach in \cite{BC09} works more generally to also identify the limit fluctuations for the exponential LPP with thick boundaries. The basic idea is that, through definitions \eqref{E:ThickBd} and \eqref{estp}, one can write
\begin{align}
\label{E:maxId}
L_{m, n}^{\bma, \bmb} = \max \{L_{(-\ell+1, 0) \to (m, n)}^{\bma, \bmb}, L_{(0, -k+1) \to (m, n)}^{\bma, \bmb}\}
\end{align}
because the weights are zero on the $\ell \times k$ corner block. The two last passage times on the right-hand side depend only on the one side of the thick boundary (either the purple or the red part in Figure \ref{F:2Slpp}). Therefore, the results of \cite{BBP06} immediately yield the limit distribution for each term of the maximum in \eqref{E:maxId}. From this knowledge, the article \cite{BC09} determined the limit distribution of $L_{m, n}^{\bma, \bmb}$ in all regimes except the critical one, which generalizes taking $r = r_c$ in the stationary case where $\rho^\pm = \rho$ and $k = \ell = 1$. When the critical direction is along the diagonal ($r_c = 1$), the critical regime is precisely when $\wt{\alpha}_i = \wt{\beta}_j = 0$ for some $i$ and $j$ in \eqref{E:Param}. The definition is the same for other choices of $r_c$ except that $1/2$ in the second and third lines of \eqref{E:ThickBd} need to be modified as $1/(1+\sqrt{r_c})$ and $\sqrt{r_c}/(1+\sqrt{r_c})$, respectively. The approach of \cite{BC09} breaks down for this regime because then neither term in the maximum \eqref{E:maxId} dominates the other, and these terms are significantly correlated. For $k = \ell = 1$, the critical regime was handled in \cite{FS05a} through a \emph{shift argument} (see also~\cite{SI04}), which has since become a standard technique to establish convergence to the Baik--Rains distribution. However, the shift argument does not generalize to the thick boundary setting.

It was conjectured in (51) of \cite{BC09} that, with appropriate rescaling, $L_{m, n}^{\bma, \bmb}$ converges in distribution in the critical regime, and the limit distribution depends only on those $x_i$ and $y_j$ parameters for which $\wt{\alpha}_i = \wt{\beta}_j = 0$. One of our main result verifies the preceding conjecture for $r_c = 1$ with an explicit identification of the limit, see Theorem~\ref{thm:AC_conj}. The choice of the diagonal direction is only for notational simplicity; a version of our result for any other direction can be formulated and proved with minor modifications.

The proof of Theorem~\ref{thm:AC_conj} begins with a derivation of the distribution of $L_{m, n}^{\bma, \bmb}$ for arbitrary vectors $\bma$ and $\bmb$ with components $\alpha_i, \beta_j  > -1/2$ but not necessarily of the form \eqref{E:Param}. This intermediate result, which is recorded as Theorem~\ref{thm:expKernel}, is also new and might be of independent interest. In its proof, we first assume that $\alpha_i,  \beta_j > 0$ for all $i, j$. Then we can compute the joint distribution of the two terms of the maximum in \eqref{E:maxId} through the associated Schur process \cite{OR01} and a limit transition from the geometric to the exponential LPP. Doing so yields the distribution of $L_{m, n}^{\bma, \bmb}$ as a Fredholm determinant of an explicit $2 \times 2$ kernel with entries expressed as contour integrals. Through analytic continuation of these integrals, we then show that the distributional formula remains valid even without the positivity restriction on the parameters $\alpha_i$ and $\beta_j$. Such an extension is essential for our treatment of the critical regime where $\wt{\alpha}_i = \wt{\beta}_j = 0 $ and so $\alpha_i$ and $\beta_j$ can be nonpositive. We then apply Theorem~\ref{thm:expKernel} choosing $\bma$ and $\bmb$ as in \eqref{E:Param} and $(m, n) = (N, N) + \tau  (2N)^{2/3} (-1, 1)$ where $\tau$ is a real parameter. For the asymptotics as $N \to \infty$, we perform a saddle-point, steep-descent analysis along the lines of \cite{BFP09}, which leads to Theorem~\ref{thm:AC_conj}.

\subsection{A new representation of the Baik--Rains distribution}

The novel limit distribution that appears in Theorem~\ref{thm:AC_conj} is given by a Fredholm determinant of a $2 \times 2$ limit kernel, see Definition~\ref{def:gBR}. To connect it to the Baik--Rains distribution, recall that the case $k = \ell = 1$ and $\alpha = \beta = 0$ of our setting corresponds to the stationary TASEP with density $\rho = 1/2$. It was proved in \cite{FS05a} that the last passage time $L_{m, n}^{0, 0}$ under the same scaling as in Theorem~\ref{thm:AC_conj} converges to the Baik--Rains distribution (with parameter $\tau$). Therefore, our limit distribution recovers the Baik--Rains distribution in the above special case. However, it is not straightforward to see this directly because our representation is significantly far from the original definition of the Baik--Rains distribution \cite{BR00}, which is in terms of a Painlev\'e transcendent. In Section~\ref{sec:BR}, we provide a direct proof that the two definitions are indeed the same.

In Corollary~\ref{cor:toBR}, we rewrite our limit kernel for the Baik--Rains distribution in terms of the Airy functions and the Airy kernel. This yields a new Fredholm determinant formula for the Baik--Rains distribution that is simpler than the earlier such representation obtained in \cite{FS05a}, which we recall in Appendix~\ref{AppBR}. Our representation is particularly well-suited for efficient numerical computation via Bornemann's method \cite{Born08}, which provides an alternative to the earlier evaluations of the Baik--Rains distribution via Painlev\'e transcendents \cite{PS02b,PSKPZ}. In Figure~\ref{fig:compare} ahead, we employ both methods to produce matching plots of the Baik--Rains distribution for two choices of the $\tau$ parameter.

\subsection{A generalization of the Baik--Rains distribution}

We next return to the setting of thick boundary, and assume that $\alpha_i = \beta_j = 0$ for all $i, j$. For this case, we show in Theorem~\ref{thm:var} that our limit distribution also admits an explicit variational description involving the Airy$_2$ process and $k+\ell$ independent Brownian motions. For $k = \ell = 1$, our result specializes to the variational formula in Corollary 2.4 of~\cite{CFS16} for the Baik--Rains distribution.

Variational formulas are also available for other initial conditions, see~\cite{Jo03,QR13,QR16,BL13,CLW16,CFS16,FO18}. However, for the generic case, the kernel is expressed in terms of hitting time of the hypograph of the initial condition \cite{MQR17} and, thus, is not fully explicit. Indeed, the cases of variational formulas with explicit kernels essentially reduce to the initial conditions scaling to wedge, flat, Brownian initial conditions and their mixtures, for which the joint distributions are also known~\cite{PS02,Jo03b,SI03,BFPS06,BFS07,BFP06,BFP09}. With our result, we obtain a new family of initial conditions for which the one-point marginal of the KPZ fixed point has explicit determinantal formulas.

In the proof of Theorem~\ref{thm:var}, a main step is to control the exit point where the geodesic from the origin leaves the thick boundary. We achieve this with an exponential tail bound in Proposition~\ref{p:exit} by adapting the approach in \cite{BHA20} for the case $k = \ell = 1$.

Our final result concerns the following variation of the model in \eqref{E:ThickBd}. First, setting $x_1 = y_1 = 0$ and $\wt{\alpha}_i = \wt{\beta}_j = 0$ for all $i, j$, we assume that $x_i + y_j > 0$ for all $i, j$ except when $i = j = 1$.  We then introduce independent weights $(\wh{\omega}_{i, j})$ such that
\begin{align}
\label{E:StThickBd}
\wh{\omega}_{i,j} \sim
		\begin{cases}
			\Exp(1), & \text{if } i > 0 \text{ and } j > 0, \\
			\Exp\left(\frac{1}{2} + \alpha_{i+\ell}\right), & \text{if } i \leq 0 \text{ and } j  > 0, \\
			\Exp\left(\frac{1}{2} + \beta_{j+k} \right), & \text{if } i > 0 \text{ and } j \leq 0, \\
			\Exp\left(\alpha_{i+\ell} + \beta_{j+k}\right) &\text{if } i \leq 0 \text{ and } j\leq 0 \text{ and } (i, j) \neq (-\ell+1, -k+1), \\
			0, & \text{if } i = -\ell+1 \text{ and } j = -k + 1.
		\end{cases}
\end{align}
In other words, we replace the zero weights on the $\ell \times k$ corner block (shown in black in Figure~\ref{F:2Slpp}) except the one at the corner point $(-\ell+1, -k+1)$ with independent rate $\alpha_{i+\ell} + \beta_{j+k} = (16N)^{-1/3} (x_{i+\ell}+y_{j+k}) > 0$ exponentials. We define the last passage time $\wh{L}_{m, n}^{\bma, \bmb}$ from the origin as in \eqref{estp} using the $\wh{\omega}$ weights in place of $\omega$.

One can view \eqref{E:StThickBd} as an intermediate model between \eqref{E:ThickBd} and the LPP model introduced by Borodin and P\'ech\'e \cite{BP07} where the weight at $(-\ell+1, -k+1)$ is also an independent exponential with rate $\alpha_{1}+\beta_{1} = (16N)^{-1/3} (x_{1}+y_{1})$, assuming further that $x_1 + y_1 > 0$. Alternatively, one can think of \eqref{E:StThickBd} as the basic case $k = \ell = 1$ of \eqref{E:ThickBd} with a few inhomogeneous columns and rows.

Our interest in \eqref{E:StThickBd} stems from the fact that the process $\wh{L}^{\bma, \bmb}$ has stationary increments \cite{E16} and, thus, generalizes the stationary LPP process $L^{0, 0}$. Given that the  Baik--Rains distribution governs the limit fluctuations of $L^{0, 0}$ along the critical direction, it is natural to wonder what form the preceding statement takes for $\wh{L}^{\bma, \bmb}$. We address this question in Proposition~\ref{P:gBR2} and find that the associated limit distribution is a variation of the distribution in Definition~\ref{def:gBR}. We will only briefly indicate how to obtain this result since its proof is very similar to that of Theorem~\ref{thm:AC_conj}.

\subsection{Further related literature}

Under mild assumptions, LPP with general IID weights is expected to belong to the Kardar--Parisi--Zhang (KPZ) universality class \cite{KPZ86}, which has been a subject of intense research in the past several decades. For a detailed introduction, we refer to the review articles and lecture notes \cite{FS10,Cor11,QS15,BG16,Qua11,Fer10b,Tak16, Zyg18}. Briefly, the KPZ class is a conjectural collection of stochastic models with similar large-scale fluctuations, and is believed to include various growth models, interacting particle systems and random matrix ensembles. In the appropriate scaling limit, the fluctuations of any KPZ class model are supposedly described by each of the two closely related universal objects, the KPZ fixed point and the directed landscape. The KPZ fixed point is a Markov process on the space of upper-semicontinuous functions on $\R$, and was originally constructed in \cite{MQR17} as a limit of the TASEP. The directed landscape is a random directed metric on $\R^4$ and was first obtained in \cite{DOV22} as the limit of the Brownian LPP. The relation between the two is that the KPZ fixed point can be represented through a variational formula in terms of its initial condition and the directed landscape \cite{NQR20}. For several integrable and adjacent models, the convergence to the directed landscape and the KPZ fixed point has been established \cite{DV21, ACH24, QS20, Wu23, Vir20}, giving rigorous evidence for the universality of these limits. Integrability here means the possibility of deriving exact distributional formulas that can be studied through asymptotic techniques to access limit statistics. For the exponential LPP and TASEP in particular, a source of integrability is the connection to the Schur measures and processes \cite{Jo00b, Ok01, OR01}.

The Baik--Rains distribution is believed to describe the limit fluctuations along the critical direction in any KPZ model under stationary initial conditions. A distributional convergence statement to this effect was first proved for the PNG \cite{BR00} where the Baik--Rains distribution was introduced. For the TASEP and exponential LPP, the analogous result was obtained in \cite{FS05a} and subsequently extended to multi-point convergence in \cite{BFP09}.  The latter type of result was then derived for the Brownian LPP in \cite{FSW15}. By now, convergence to the Baik--Rains distribution has also been established in several positive-temperature models including the KPZ equation \cite{BCFV14}, the asymmetric simple exclusion process (ASEP) and the stochastic six vertex model \cite{Agg16}, $q$-TASEP \cite{SI17}, O'Connell--Yor polymer \cite{SI17b}, and the stochastic higher spin six vertex model \cite{IMS19}. The half-space analogue of the Baik--Rains distribution and the associated multi-point process were introduced in the articles \cite{BFO19} and \cite{BFO20}, respectively. In the setting of stationary TASEP on a ring, a periodic version of the Baik--Rains distribution was found by \cite{L18}, see also \cite{Prol16}. More recently,  the two-time extension of the Baik--Rains distribution was described in \cite{Rah25}. We also mention the very recent work \cite{Ze25} that obtained the two-parameter counterpart of the half-space Baik--Rains distribution under the critical scaling limit of the stationary half-space geometric LPP.

Inhomogeneous extensions of integrable KPZ models have also attracted significant research attention lately, thanks in part to featuring rich limit statistics and interesting phase transitions, a prominent example being the Baik--Ben Arous--P\'ech\'e (BBP) transition \cite{BBP06}. In the exponential LPP, varying the exponential rates in the form $a_i+b_j$ at each site $(i, j)$ retains integrability. Consequently, many aspects of the exponential LPP with such inhomogeneous rates have been studied extensively. A nonexhaustive list of prior works covers exact distributional identities \cite{BP07, DW08, Def08, AvMW13, JR22}, hydrodynamics and limit shape \cite{BFC96, SK99, E16, EJS21}, large deviations \cite{EJ17}, limit fluctuations \cite{BBP06, BP07, Kar07, Joh08, E16PhD, BC09, BV13, BV16, CLW16, Dim26, JR22} and infinite geodesics \cite{EJS25}. See also the further related works on other integrable inhomogeneous LPP \cite{GTW02, GTW02b, IS07, JR22, KLO21, DW07, FAvM08} and the generalizations of the TASEP with particlewise, spatial or temporal inhomogeneity \cite{KPS19, RS06, Arai20, BLSZ23, Assi20, Petr20}.

\subsection{Notation}

For $k\in\Z_{\geq 1}$, we write $\llbracket k \rrbracket := \{1,2,\ldots,k\}$. Bold font denotes vectors. Given two vectors $\bma := (\alpha_1, \ldots, \alpha_\ell) \in \R^{\ell}$ and $\bmb := (\beta_1, \ldots, \beta_k) \in \R^k$, we denote their concatenation by $ \bma, \bmb$,
that is,
	$\bma, \bmb := (\alpha_1, \ldots, \alpha_\ell, \beta_1, \ldots, \beta_k).$
We also denote the indices of the zero components of $\bma$ by $\mathcal{I}(\bma) := \{ i \mid \alpha_i = 0 \}$
  and for $i\in\llbracket\ell\rrbracket$, we define $\bma_{[i:\ell]}:=\{\alpha_i,\alpha_{i+1},\ldots,\alpha_\ell\}.$

For a meromorphic function $f$ and a finite subset $I$ of the complex plane, we will write
$\Gamma_I$ for any simple anticlockwise oriented contour that encloses $I$ but not any of the poles of $f$ in the complement of $I$. Following the convention in~\cite{BFO20}, we also introduce notation for the standard Airy contours: For subsets $I, J \subset \mathbb{C}$, we denote by $\leftcontour{I}{J} $ (resp.\ $\rightcontour{I}{J}$) an upward-oriented contour from $e^{-2\pi \I /3} \infty$ (resp.\ $e^{-\pi\I /3} \infty$) to $e^{2\pi\I /3} \infty$ (resp.\ $ e^{\pi \I/3}\infty $) such that all points in $I$ are to its left and all points in $J$ are to its right.
Also, we denote by $B_r$ the anticlockwise-oriented circle centered at the origin and of radius $r > 0$.

The Fredholm determinant of a $2\times 2$ matrix kernel $K$ can be defined by its Fredholm series expansion as
\begin{equation*}
\det\left(\Id-P_sK P_s\right)_{L^2(\R)} = \sum_{\ell = 0}^\infty \frac{(-1)^\ell}{\ell!}\sum_{k_1,\ldots,k_\ell=1}^2\int_s^\infty \dx x_1 \cdots \int_s^\infty \dx x_\ell \det_{1\leq i,j\leq \ell}[K_{k_i k_j}(x_i,x_j)].
\end{equation*}

\subsection{Acknowledgments}
We are grateful to Michael Pr\"ahofer for kindly providing the numerical data for the Painlev\'e transcendents and for explaining how to use it to compute the Baik--Rains distribution. E.\ Emrah would also like to thank G\'erard Ben Arous, Ivan Corwin and Herbert Spohn for their helpful comments at the preliminary stages of this project.
E.\ Emrah was partially supported by the EPSRC grant EP/W032112/1 and by the grant KAW 2015.0270 from the Knut and Alice Wallenberg Foundation. P.L.\ Ferrari and M.\ Liu were partly funded by the Deutsche Forschungsgemeinschaft (DFG, German Research Foundation) by the CRC 1720 – 539309657 and under Germany’s Excellence Strategy - EXC 2047/1 – 390685813.

\section{Main results}\label{sectResults}

In this section, we state our main results.

\subsection{One-point distribution}

Our first result gives the distribution of the last passage time $L^{\bma, \bmb}_{m,n}$ for any $\bma, \bmb > -1/2$ componentwise. In its statement, we use the function $E =E_{m, n}$ defined by
	\begin{equation} \label{E:P}
		E_{m,n}(x, w; y, v) := \left(\frac{1 / 2-w}{1 / 2-v}\right)^m\left(\frac{1/2 + v}{1/2 + w} \right)^{n} e^{w x - v y} \quad \text{ for } x, w, y, v \in \C.
	\end{equation}
	
	\begin{thm}[Distribution of Exponential LPP]\label{thm:expKernel}
		For any $\bma,\bmb>-\dfrac12$ and $m,n\in \Z_{>0}$,
		\begin{equation}\label{right}
			\Pb\left(L^{\bma, \bmb}_{m,n}\leq s\right)=\det\left(\Id-P_sK_{m,n}^{\exp}P_s\right)_{L^2(\R)},
		\end{equation}
        where $P_s$ is the projection operator onto $(s,\infty)$ and
		 $K_{m,n}^{\exp}$ is a $2\times 2$ matrix kernel with entries:
		\begin{align}
				K_{m,n,11}^{\exp}(x, y) &:= \frac{-1}{(2\pi\I)^2} \oint_{\Gamma_{-1/2,-\bmb}} \dx w\oint_{\Gamma_{1/2}} \dx v  \,
				\frac{E(x, w; y, v)}{v-w} \prod_{j=1}^{k} \frac{v + \beta_j}{w + \beta_j}, \nonumber\\		
				K_{m,n,22}^{\exp}(x, y) &:= \frac{-1}{(2\pi\I)^2}\oint_{\Gamma_{-1/2}} \dx w \oint_{\Gamma_{1/2,\bma}} \dx v  \,
				\frac{E(x, w; y, v)}{v-w} \prod_{i=1}^{\ell} \frac{w - \alpha_i}{v - \alpha_i}, \nonumber\\ 		
				K_{m,n,21}^{\exp}(x, y) &:= \frac{-1}{(2\pi\I)^2} \oint_{\Gamma_{-1/2}} \dx w \oint_{\Gamma_{1/2}} \dx v \,
				\frac{E(x, w; y, v)}{v-w} \prod_{j=1}^{k} (v + \beta_j) \prod_{i=1}^{\ell} (w - \alpha_i), \\ 	
				K_{m,n,12}^{\exp}(x, y) &:= \frac{-\Id_{x>y}}{(2\pi\I)^2}
				\oint_{\Gamma_{-1/2,-\bmb}} \dx w\oint_{\Gamma_{1/2,\bma,w}} \dx v
				\,\frac{E(x, w; y, v)}{v-w}\frac{1}{ \prod_{j=1}^{k} (w + \beta_j) \prod_{i=1}^{\ell} (v - \alpha_i) }\nonumber\\
&+\frac{-\Id_{x\leq y}}{(2\pi\I)^2}
				\oint_{\Gamma_{1/2,\bma}} \dx v\oint_{\Gamma_{-1/2,-\bmb,v}} \dx w
				\,\frac{E(x, w; y, v)}{v-w}\frac{1}{ \prod_{j=1}^{k} (w + \beta_j) \prod_{i=1}^{\ell} (v - \alpha_i) }.\nonumber
		\end{align}
\end{thm}
This theorem is proved in Section~\ref{SectFiniteN}. The well-definedness of the Fredholm determinant on the right hand side of~\eqref{right} is proved in Appendix~\ref{Sec:Well}.

\subsection{Limit fluctuations in the critical regime}

We next choose the parameters $\bma$ and $\bmb$ as in \eqref{E:Param}, that is,
\begin{equation} \label{eq2.11}
				\bma = \tilde\bma + (16N)^{-1/3} \bfx, \quad
				\bmb = \tilde\bmb + (16N)^{-1/3} \bfy,
		\end{equation}
		where $\tilde\bma, \bfx \in \R^\ell$ and $\tilde\bmb,\bfy \in \R^k$ are such that
		\begin{equation}
			-\frac12<\tilde\bma + (16N)^{-1/3} \bfx,\tilde\bmb + (16N)^{-1/3} \bfy.
		\end{equation}
We also take the endpoint of the last passage time $L_{m, n}^{\bma, \bmb}$ to be
\begin{equation}\label{eq2.10}
(m, n) = (N - \tau(2N)^{2/3},\, N + \tau(2N)^{2/3})
\end{equation}
for some parameter $\tau \in \R$. Our aim is to identify the limit fluctuations of $L_{m, n}^{\bma, \bmb} = L_{N,\tau}^{\tilde\bma, \tilde\bmb, \bfx, \bfy}$ as $N \to \infty$.
				The scaling of $\bfx$ and $\bfy$ by $N^{-1/3}$ will ensure that these parameters contribute nontrivially in the scaling limit if the corresponding $\tilde\alpha_i$, $\tilde\beta_i$ are  $0$.
				
As mentioned in the introduction, the asymptotic behavior of the last passage time $L^{\tilde\bma, \tilde\bmb, \bfx, \bfy}_{N,\tau}$ was analyzed in~\cite{BC09} for various regimes of the parameters.  We summarize below some results from \cite{BC09} and refer to that article for further details.
		\begin{thm}[Theorem~2.1 of~\cite{BC09}]$ $
			\begin{enumerate}[(i)]
				\item If
                $-1/2<\min\left\{\tilde\alpha_i,\tilde\beta_j \mid i\in\llbracket \ell \rrbracket,\, j\in\llbracket k \rrbracket \right\} < 0,$
				the fluctuation of $L^{\tilde\bma, \tilde\bmb, \bfx, \bfy}_{N,\tau}$ is of order $\Or(N^{1/2})$ with known limiting distribution;
				\item If $\tilde\bma,\tilde\bmb\geq 0$ with $|\mci(\tilde\bma)|=|\mci(\tilde\bmb)|=0$, then the fluctuation of $L^{\tilde\bma, \tilde\bmb, \bfx, \bfy}_{N,\tau}$ is of order $\Or(N^{1/3})$ and the limiting distribution is GUE Tracy-Widom distribution.
				\item If $\tilde\bma,\tilde\bmb\geq 0$ with $|\mci(\tilde\bma)|=0$ and $|\mci(\tilde\bmb)|>0$, then the fluctuation of $L^{\tilde\bma, \tilde\bmb, \bfx, \bfy}_{N,\tau}$ is of order $\Or(N^{1/3})$ and the limiting distribution is $F_{|\mci(\tilde\bmb)|,\bfy_{\mci(\tilde\bmb)}}$ introduced in~\cite{BBP06}.
			\end{enumerate}
		\end{thm}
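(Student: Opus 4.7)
The plan is to exploit the max decomposition
\begin{equation*}
L_{m,n}^{\bma,\bmb} = \max\bigl\{L_{(-\ell+1,0)\to(m,n)}^{\bma,\bmb},\, L_{(0,-k+1)\to(m,n)}^{\bma,\bmb}\bigr\}
\end{equation*}
from \eqref{E:maxId}: each of the two terms on the right depends only on one side of the thick boundary, and, modulo the deterministic $\ell\times k$ corner block of zero weights (which only shifts the starting corner), is a one-sided exponential LPP whose one-point asymptotics are supplied by the Baik--Ben Arous--P\'ech\'e theorem \cite{BBP06}. In each regime, I would first read off the marginal limit of each term from \cite{BBP06} and then argue that the maximum collapses onto the dominant marginal on the relevant fluctuation scale.

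For (i), assume $\tilde\alpha_i<0$ for some $i\in\llbracket\ell\rrbracket$ (the case $\tilde\beta_j<0$ is symmetric). The $i$-th boundary column carries i.i.d.\ $\Exp(1/2+\tilde\alpha_i)$ weights with mean strictly greater than $2$. I would lower bound the $\alpha$-term by traversing the entire $i$-th column up to row $N$ and then crossing through the bulk to $(m,n)$, producing a CLT sum of order $N$ with Gaussian fluctuations of order $N^{1/2}$. A matching upper bound follows from the exit-point analysis in the spirit of \cite{BCS06,BBP06}, forcing the geodesic to stick near the steepest boundary column up to lower-order corrections. Since $N^{1/2}\gg N^{1/3}$, the $\beta$-term is overwhelmed on this scale. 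Repeating the argument over all indices with $\tilde\alpha_i<0$ or $\tilde\beta_j<0$, and using the asymptotic independence of the corresponding column/row sums, yields the limit as a maximum of independent Gaussians with the $\bfx,\bfy$ corrections entering only through the means.

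For (ii) and (iii), all boundary parameters are non-negative, so no row or column is ballistic, the deterministic leading order of both terms is $4N$, and both fluctuate on the KPZ scale $N^{1/3}$. Applying \cite{BBP06} to each term individually gives Tracy--Widom GUE for a one-sided boundary with only strictly positive parameters and $F_{r,\bfy}$ for one with $r$ critical parameters. In (ii), both terms have the same Tracy--Widom GUE limit, and a sandwich between the boundary-free LPP $L_{(1,1)\to(m,n)}$ from below and the LPP with all boundary rates raised to $1$ from above --- both of which converge to the same Tracy--Widom GUE limit by \cite{Jo00b} --- forces the max to share this limit. In (iii), one term (the $\beta$-term) carries all the critical parameters while the other has only strictly positive ones; I would argue, via a shift-type analysis of the joint distribution as in the stationary arguments of \cite{FS05a,BC09}, that the critical $\beta$-term dominates the $\alpha$-term in the limit so that the max reduces to the $\beta$-term and yields $F_{|\mci(\tilde\bmb)|,\bfy_{\mci(\tilde\bmb)}}$.

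The main obstacle will be controlling the joint law of the two terms in \eqref{E:maxId}, since their geodesics typically share substantial portions of the bulk and are strongly correlated. The strategy above sidesteps this by showing, in each of (i)--(iii), that one term dominates the other up to $o(1)$ on the relevant fluctuation scale, so the joint law collapses onto the marginal of the dominant term. The truly difficult case, $\tilde\bma,\tilde\bmb\geq 0$ with both $|\mci(\tilde\bma)|,|\mci(\tilde\bmb)|>0$ positive, is precisely where no such domination is available and where both terms contribute in a genuinely correlated way; this is what the present paper resolves in Theorem~\ref{thm:AC_conj} by a direct Fredholm determinantal computation rather than a domination argument.
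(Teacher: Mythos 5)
This statement is cited from \cite{BC09} (Theorem~2.1 there) and is not proved in the present paper; the authors only recall it and outline the strategy, which is indeed the one you sketch: the max decomposition \eqref{E:maxId}, plus the marginal BBP limits from \cite{BBP06}, plus a domination argument. So your high-level plan is correct. However, the mechanism by which you implement the domination has a genuine gap in cases (ii) and (iii).

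Your sandwich in case (ii) does not work: the boundary weights are $\Exp(1/2+\tilde\alpha_i)$ with $\tilde\alpha_i>0$. When $\tilde\alpha_i\in(0,1/2)$, the rate $1/2+\tilde\alpha_i$ is \emph{less} than $1$, so the boundary weights are stochastically \emph{larger} than the bulk ones; raising the boundary rate to $1$ then \emph{decreases} the weights and hence the LPP, giving another lower bound rather than the upper bound you need. The sandwich only closes for $\tilde\alpha_i>1/2$, not on the whole regime. The argument that actually works (and is what \cite{BC09} exploits) uses the monotone coupling $A:=L^{\bma,\bmb}_{(-\ell+1,0)\to(m,n)}\geq L_{(1,1)\to(m,n)}$ and $B:=L^{\bma,\bmb}_{(0,-k+1)\to(m,n)}\geq L_{(1,1)\to(m,n)}$ together with the elementary fact that if $X_N\geq Y_N$ almost surely and both $X_N$ and $Y_N$ converge in distribution (after the same centering and scaling) to the same limit, then $X_N-Y_N\to 0$ in probability; this is what forces $\max\{A,B\}=L_{(1,1)\to(m,n)}+o_{\Pb}(N^{1/3})$ in (ii). For (iii), the same lemma applied to $A$ alone gives $A-L_{(1,1)\to(m,n)}\to 0$ in probability, and since $B\geq L_{(1,1)\to(m,n)}$ pointwise one gets $\max\{A,B\}=B+o_{\Pb}(N^{1/3})$, so the limit is $F_{|\mci(\tilde\bmb)|,\bfy_{\mci(\tilde\bmb)}}$; this replaces the vague ``shift-type analysis'' you invoke, which is a different technique suited to the \emph{critical} case, not this one. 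In case (i), your claim of ``asymptotic independence'' of the ballistic column/row contributions when both $\tilde\bma$ and $\tilde\bmb$ have negative entries is plausible but requires justification — the geodesics share the starting corner and the bulk — and the centering of $L^{\tilde\bma,\tilde\bmb,\bfx,\bfy}_{N,\tau}$ also shifts away from $4N$ in this regime, which you do not address.
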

		
		In the case where $\tilde\bma, \tilde\bmb \geq 0$ with $\min\{|\mci(\tilde\bma)|, |\mci(\tilde\bmb)|\} > 0$, Ben Arous and Corwin conjectured the existence of a limit distribution $F_{|\mci(\tilde\bma)|, |\mci(\tilde\bmb)|, \bfx_{\mci(\tilde\bma)}, \bfy_{\mci(\tilde\bmb)}}$ that governs the fluctuations of $L_{N,\tau}^{\tilde\bma, \tilde\bmb, \bfx, \bfy}$ under the KPZ scaling in the large-$N$ limit (see (51) of~\cite{BC09}). Our next result establishes this conjecture and provides an explicit formula for the limit distribution. This new distribution, which generalizes the Baik--Rains distribution, can be introduced as follows.
		
	\begin{defin}[Generalized Baik--Rains distribution]\label{def:gBR}
		Let $k, \ell \in \mathbb{Z}_{\geq 1}$, $\bfx \in \R^{\ell}$, $\bfy \in \R^{k}$. We define the CDF of the generalized Baik–Rains distribution by
		\begin{equation}
			F_{\ell,k,\bfx,\bfy}(s)=\det\left(\Id-P_s K^{\bfx,\bfy}P_s\right)_{L^2(\R)},
		\end{equation}
        where $P_s$ is the projection operator onto $(s,\infty)$ and $K^{\bfx,\bfy}$ the $2\times 2$ matrix kernel with entries
\begin{align}
		K_{11}^{\bfx,\bfy}(\xi, \zeta) &:= \frac{1}{(2\pi \I)^2}
		\int_{\; \leftcontour{-\bfy}{}} \dx w
		\int_{\; \rightcontour{-\bfy,w}{}} \dx v \,
		\frac{e^{v^3/3 - v\zeta}}{e^{w^3/3 - w\xi}}  \frac{1}{v - w}
		\prod_{i=1}^{k}\frac{v + y_i}{w + y_i} , \nonumber \\
		K_{22}^{\bfx,\bfy}(\xi, \zeta) &:= \frac{1}{(2\pi \I)^2}\int_{\; \rightcontour{}{\bfx}} \dx v
		\int_{\; \leftcontour{}{\bfx,v}} \dx w\,
		\frac{e^{v^3/3 - v\zeta}}{e^{w^3/3 - w\xi}}  \frac{1}{v - w}
		\prod_{i=1}^{\ell} \frac{w - x_i}{v - x_i}, \label{E:GBR} \\
		K_{21}^{\bfx,\bfy}(\xi, \zeta) &:= \frac{1}{(2\pi \I)^2}
		\int_{\leftcontour{}{0}} \dx w
		\int_{\rightcontour{w}{}} \dx v \,
		\frac{e^{v^3/3 - v\zeta}}{e^{w^3/3 - w\xi}}  \frac{1}{v - w}
		\prod_{i=1}^{k} (v + y_i) \prod_{j=1}^{\ell} (w - x_j), \nonumber \\
		K_{12}^{\bfx,\bfy}(\xi,\zeta)&:=\frac{\Id_{\xi>\zeta}}{\left(2\pi \I\right)^2}\int_{\;\rightcontour{}{-\bfy,\bfx}}\dx v \bigg[ \int_{\;\leftcontour{}{-\bfy, \bfx,v}}\dx w f_{12}(\xi,w;\zeta,v)+\oint_{ \Gamma_{-\bfy}} \dx w f_{12}(\xi,w;\zeta,v)\bigg] \nonumber \\
		&\quad+\frac{\Id_{\xi\leq \zeta}}{\left(2\pi \I\right)^2}
		\int_{\leftcontour{-\bfy,\bfx}{}}\dx w\bigg[\int_{\rightcontour {-\bfy,\bfx,w}{}}\dx v f_{12}(\xi,w;\zeta,v)- \oint_{\Gamma_{\bfx}} \dx v f_{12}(\xi,w;\zeta,v)\bigg], \nonumber
\end{align}
where
\begin{equation}\label{E:f12}
	f_{12}(\xi,w;\zeta,v):=\frac{e^{v^3/3 - v\zeta}}{e^{w^3/3 - w\xi}}  \frac{1}{v-w}\frac{1}{\prod_{i=1}^{k}\left(w+y_i\right)\prod_{j=1}^{\ell}\left(v-x_j\right)}.
\end{equation}
In particular, the contours for $w,v$ are chosen not to intersect.
	\end{defin}
	
 	\begin{thm}\label{thm:AC_conj}
		Let $\tilde\bma \in \R^{\ell}_{\geq 0}$ and $\tilde\bmb \in \R^{k}_{\geq 0}$ with $\min\{|\mathcal{I}(\tilde\bma)|,\, |\mathcal{I}(\tilde\bmb)|\} > 0$. Then, for all $s \in \R$, the following convergence holds:
		\begin{equation} \label{E:AC_Conj}
			\lim_{N \to \infty} \Pb \left(L^{\tilde\bma, \tilde\bmb, \bfx, \bfy}_{N,\tau} \leq 4N + (16N)^{1/3} s \right)
			= F_{|\mathcal{I}(\tilde\bma)|,\, |\mathcal{I}(\tilde\bmb)|,\, \bfx_{\mathcal{I}(\tilde\bma)} - \tau,\, \bfy_{\mathcal{I}(\tilde\bmb)} + \tau}(s + \tau^2),
		\end{equation}
		where the distribution on the right-hand side is the generalized Baik–Rains distribution introduced in Definition~\ref{def:gBR}. With the notation $\bfx_{\mathcal{I}(\tilde \bma)}$ we mean the subset of $x_i$ such that $i\in \mathcal{I}(\tilde \bma)$, and similarly for $\bfy_{\mathcal{I}(\tilde \bmb)}$.
	\end{thm}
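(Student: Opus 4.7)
The plan is to start from the finite-$N$ Fredholm determinant formula of Theorem~\ref{thm:expKernel} with the parameters \eqref{eq2.11} and the endpoint \eqref{eq2.10}, and carry out a saddle-point, steep-descent asymptotic analysis as $N \to \infty$ along the lines of \cite{BFP09}. Set $\wt K_N(\xi, \zeta) := (16N)^{1/3} K_{m, n}^{\exp}\bigl(4N + (16N)^{1/3}\xi,\; 4N + (16N)^{1/3}\zeta\bigr)$; the change of variable in the Fredholm series gives
\begin{equation*}
\Pb\bigl(L_{N, \tau}^{\tilde\bma, \tilde\bmb, \bfx, \bfy} \le 4N + (16N)^{1/3}s\bigr) = \det\bigl(\Id - P_s \wt K_N P_s\bigr)_{L^2(\R)},
\end{equation*}
so it suffices to prove entrywise pointwise convergence of $\wt K_N$ to the shifted limit kernel $K^{\bfx_{\mathcal{I}(\tilde\bma)} - \tau,\, \bfy_{\mathcal{I}(\tilde\bmb)} + \tau}(\cdot + \tau^2, \cdot + \tau^2)$, together with a uniform bound $|\wt K_{N, ab}(\xi, \zeta)| \le C e^{-c(\xi + \zeta)}$ on $[s, \infty)^2$ that is strong enough to control the Fredholm series by Hadamard's inequality and dominated convergence.

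In each contour integral making up $\wt K_N$, I would substitute $w = W/(16N)^{1/3}$ and $v = V/(16N)^{1/3}$, and expand the saddle-point exponent
\begin{equation*}
H(u, z) := m \log(1/2 - u) - n \log(1/2 + u) + u z
\end{equation*}
at the double saddle $u = 0$. The vanishing of the first two derivatives at $m = n = N$, $z = 4N$, and the cubic coefficient $H'''(0) = -32 N$, combined with the quadratic contribution from the off-diagonal shift $m, n = N \mp \tau(2N)^{2/3}$ via the numerology $(2N)^{2/3}/(16N)^{2/3} = 1/4$, yield
\begin{equation*}
H\bigl(U/(16N)^{1/3},\, 4N + (16N)^{1/3}\xi\bigr) \to -\tfrac13 U^3 + \tau U^2 + \xi U
\end{equation*}
uniformly on compacts, and the shift $U \mapsto U + \tau$ completes the cube to $-\tfrac13 U^3 + (\xi + \tau^2) U$ up to a constant absorbed into a harmless kernel conjugation. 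The analogous treatment of the $v$-integral produces $\tfrac13 V^3 - (\zeta + \tau^2) V$ after $V \mapsto V + \tau$. Simultaneously, the product $\prod_{j = 1}^k (v + \beta_j)/(w + \beta_j)$ splits according to whether $j \in \mathcal{I}(\tilde\bmb)$ or not: indices with $\tilde\beta_j > 0$ give ratios tending to $1$ uniformly on the rescaled contours, while indices $j \in \mathcal{I}(\tilde\bmb)$ give exactly $(V + y_j + \tau)/(W + y_j + \tau)$, matching the product in Definition~\ref{def:gBR} under the substitution $\bfy \mapsto \bfy_{\mathcal{I}(\tilde\bmb)} + \tau$. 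The symmetric analysis of the $\bma$-side product yields the $\bfx_{\mathcal{I}(\tilde\bma)} - \tau$ parameterisation.

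Next I would deform the rescaled contours $(16N)^{1/3} \Gamma_{-1/2, -\bmb}$ and $(16N)^{1/3} \Gamma_{1/2, \bma}$ to the Airy contours from Definition~\ref{def:gBR}, on which the cubic exponent has $\operatorname{Re} U^3 \to +\infty$ and the integrand decays like $\exp(-c |\operatorname{Re} U|^3)$. The assumption $\tilde\bma, \tilde\bmb \ge 0$ ensures that all finite poles in the rescaled plane sit at $-\bfy_{\mathcal{I}(\tilde\bmb)} - \tau$ and $\bfx_{\mathcal{I}(\tilde\bma)} - \tau$, while the other candidate poles escape to $\pm\infty$ and do not obstruct the deformation. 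For the three entries $K_{11}^{\exp}$, $K_{22}^{\exp}$, and $K_{21}^{\exp}$ this deformation is direct and yields pointwise convergence to the corresponding entries of \eqref{E:GBR} together with the needed tail estimate by dominated convergence in the contour integrals.

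The main obstacle is the analysis of the entry $K_{m, n, 12}^{\exp}$, whose representation splits into two pieces via $\Id_{x > y}$ and $\Id_{x \le y}$ and uses nested contours $\Gamma_{1/2, \bma, w}$ and $\Gamma_{-1/2, -\bmb, v}$ that couple the $W$ and $V$ variables. When one unfolds the rescaled inner circles into the unbounded Airy contours of Definition~\ref{def:gBR}, the non-crossing constraint on the $(W, V)$ pair forces the extraction of residues at the simple pole $V = W$ of $1/(v - w)$, together with residues at the poles coming from the factors $\prod_{i = 1}^k (w + y_i)$ and $\prod_{j = 1}^\ell (v - x_j)$ in the denominator of $f_{12}$ in \eqref{E:f12}. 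These residues assemble precisely into the small-circle integrals $\oint_{\Gamma_{-\bfy}}$ and $\oint_{\Gamma_{\bfx}}$ present in the expression \eqref{E:GBR} for $K_{12}^{\bfx, \bfy}$. Bookkeeping of these contour exchanges, while simultaneously verifying uniform tail bounds for both pieces of the $\Id_{x > y}/\Id_{x \le y}$ decomposition, is the delicate step; the overall structure follows the template for the Baik--Rains asymptotics developed in \cite{BFP09}. Once all four kernel entries are controlled, the Fredholm determinant convergence and the identification of the limit with $F_{|\mathcal{I}(\tilde\bma)|,\, |\mathcal{I}(\tilde\bmb)|,\, \bfx_{\mathcal{I}(\tilde\bma)} - \tau,\, \bfy_{\mathcal{I}(\tilde\bmb)} + \tau}(s + \tau^2)$ are routine.
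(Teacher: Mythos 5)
Your proposal follows essentially the same route as the paper's proof in Section~\ref{sectAsymptotics}: rescale the finite-$N$ kernel of Theorem~\ref{thm:expKernel}, do a saddle-point analysis around the double critical point at the origin, shift by $\tau$ to complete the cube, separate the critical parameters from the non-critical ones, and handle $K^{\exp}_{m,n,12}$ by extracting residues that produce the small-circle contours, then pass to the Fredholm determinant via a Hadamard/dominated-convergence argument. That is precisely Lemmas~\ref{lem:pt_cvg}--\ref{lem:trace} and the subsequent proof in the paper.

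There is, however, a concrete gap in the way you set up the convergence: entrywise pointwise convergence of the plain rescaled kernel $\wt K_N$ does not hold. In $K^{\exp}_{m,n,21}$ the product $\prod_{j=1}^{k}(v+\beta_j)\prod_{i=1}^{\ell}(w-\alpha_i)$ evaluated at $v,w = O(N^{-1/3})$ contains $|\mathcal{I}(\tilde\bma)|+|\mathcal{I}(\tilde\bmb)|$ factors of size $O(N^{-1/3})$ together with the fixed non-critical factors $\tilde\beta_j$ and $-\tilde\alpha_i$; so $\wt K_{N,21}\to 0$, and dually $\wt K_{N,12}$ diverges. One must conjugate the $2\times 2$ kernel by the diagonal factor $\Phi_{N,\tilde\bma,\tilde\bmb}^{\pm 1}$ with $\Phi_{N,\tilde\bma,\tilde\bmb} = (16N)^{-(\ell_0+k_0)/3}\prod_{i>\ell_0}(-\tilde\alpha_i)\prod_{j>k_0}\tilde\beta_j$, which leaves the Fredholm determinant unchanged but is indispensable for the off-diagonal entries to have a finite nonzero limit. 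Similarly, your proposed bound $|\wt K_{N,ab}(\xi,\zeta)|\le Ce^{-c(\xi+\zeta)}$ cannot hold as stated: the $w$-contour necessarily dips into $\operatorname{Re}(w)>0$ (since some $-y_j$ may be positive), so $e^{w\xi}$ grows in $\xi$ unless you conjugate by an additional factor such as the $T_{ij}$ of the paper. Both conjugations are standard and compatible with your overall plan, but neither is the ``constant absorbed into a harmless kernel conjugation'' you mention, which refers only to the exponential factor coming from the cube completion; without the matrix conjugation and the tail-bound conjugation the individual convergence claims you rely on are false.
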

	
This theorem is proved in Section~\ref{sectAsymptotics}.

	\subsection{New determinantal formula for the Baik--Rains distribution}
		
We now connect Definition~\ref{def:gBR} to the classical Baik--Rains distribution and then record a new representation for the latter. In the special case $k=\ell=1$, it is known that
	\begin{equation}\label{E:brcvg}
		\lim_{N \to \infty} \mathbb{P}\left( L_{N,\tau}
		\leq 4N + (16N)^{1/3} s\right) = F_{{\rm BR},\tau}(s),
	\end{equation}
	where  $L_{N,\tau} = L_{N, \tau}^{0, 0, 0, 0}$ and $F_{{\rm BR},\tau}$ is the (CDF of the) Baik--Rains distribution. The distribution $F_{{\rm BR},\tau}$ was first discovered as a limiting law in the study of the LPP with Poisson weights, where it was expressed in terms of the solution to a certain Painlev\'e equation~\cite{BR00}. The analogous limit for the exponential LPP is \eqref{E:brcvg}, which was initially conjectured by Prähofer and Spohn~\cite{PS01}, and proved in~\cite{FS05a}.
	It follows from \eqref{E:brcvg} and Theorem~\ref{thm:AC_conj} that
	\begin{equation}\label{E:gT}
		F_{{\rm BR},\tau}(s)=F_{1,1,-\tau,\tau}(s+\tau^2).
	\end{equation}
	In Section~\ref{sec:BR}, we give a direct analytic proof of the preceding identity for $\tau=0$.

	In view of \eqref{E:gT}, using the integral representation of Airy function together with~\eqref{E:GBR}, we obtain a new representation of $F_{{\rm BR},\tau}$ as a Fredholm determinant of a $2\times 2$ kernel given in Corollary~\ref{cor:toBR} below. Our formula is simpler than the one available in~\cite{FS05a}; see Appendix~\ref{AppBR} for comparison.
	
	\begin{cor}\label{cor:toBR}
		For any $s, \tau \in \R$, we have
		\begin{equation}\label{E:oldAndNew}
			F_{{\rm BR},\tau}(s) = \det(\Id-P_{s+\tau^2} K^{{\rm BR},\tau} P_{s+\tau^2})_{L^2(\R)}
		\end{equation}
		where $K^{{\rm BR},\tau}$ is a $2\times 2$ matrix kernel with entries
		\begin{align}\label{E:brtau}
				K^{{\rm BR},\tau}_{11}(\xi, \zeta) &:=\kai(\xi,\zeta)-\Ai(\zeta)\int_0^\infty\dx\lambda\Ai(\xi+\lambda)e^{-\tau\lambda}+e^{-\frac{\tau^3}{3}+\tau \xi}\Ai(\zeta)\nonumber\\
				K^{{\rm BR},\tau}_{22}(\xi, \zeta) &:=\kai(\xi,\zeta)-\Ai(\xi)\int_0^\infty\dx\lambda\Ai(\zeta+\lambda)e^{\tau\lambda}+e^{\frac{\tau^3}{3}-\tau \zeta}\Ai(\xi)\nonumber\\
				K^{{\rm BR},\tau}_{21}(\xi, \zeta) &:=\tau^2\kai(\xi,\zeta)-\tau(\partial_\xi\kai(\xi,\zeta)-\partial_\zeta\kai(\xi,\zeta))-\partial_\xi\partial_\zeta \kai(\xi,\zeta)\\
				K^{{\rm BR},\tau}_{12}(\xi, \zeta) &:=-\int_0^\infty\dx\lambda\int_0^\infty\dx\gamma\; \kai(\xi+\gamma,\zeta+\lambda)e^{\tau(\lambda-\gamma)}\nonumber\\
				&\quad+e^{\frac{\tau^3}{3}-\tau\zeta}\int_0^\infty\dx\gamma\Ai(\xi+\gamma)\gamma e^{-\tau\gamma}+e^{-\frac{\tau^3}{3}+\tau\xi}\int_0^\infty\dx\lambda\Ai(\zeta+\lambda)\lambda e^{\tau\lambda}\nonumber\\
				&\quad-e^{\tau(\xi-\zeta)}(\tau^2-\min\{\xi,\zeta\}),\nonumber
		\end{align}
		where $\kai$ is the Airy kernel
		\begin{equation}\label{E:airk}
			\kai(\xi,\zeta):=\int_0^\infty\dx\lambda\Ai(\xi+\lambda)\Ai(\zeta+\lambda).
		\end{equation}
	\end{cor}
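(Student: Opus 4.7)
The plan is to start from the identity \eqref{E:gT}, $F_{{\rm BR},\tau}(s) = F_{1,1,-\tau,\tau}(s+\tau^2)$, which follows by specializing Theorem~\ref{thm:AC_conj} to $k=\ell=1$ with $\tilde{\bma}=\tilde{\bmb}=0$, $\bfx=\bfy=0$, and invoking the known convergence \eqref{E:brcvg}. By Definition~\ref{def:gBR}, the right-hand side is already the Fredholm determinant $\det(\Id-P_{s+\tau^2}K^{-\tau,\tau}P_{s+\tau^2})$ of a $2\times 2$ matrix kernel, so the task reduces to rewriting each entry of $K^{-\tau,\tau}$ in the explicit Airy form appearing in \eqref{E:brtau}.

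For each of $K_{11}$, $K_{22}$, and $K_{21}$, I would decompose the rational prefactor to isolate a $\tfrac{1}{v-w}$ contribution from simple-pole corrections. Concretely, use $\tfrac{v+\tau}{w+\tau}=1+\tfrac{v-w}{w+\tau}$ for $K_{11}$, the analogous $\tfrac{w+\tau}{v+\tau}=1-\tfrac{v-w}{v+\tau}$ for $K_{22}$, and expand $(v+\tau)(w+\tau)=vw+\tau v+\tau w+\tau^2$ term-by-term for $K_{21}$. The $\tfrac{1}{v-w}$ piece, combined with the Airy representations $\Ai(x)=\tfrac{1}{2\pi\I}\int_{\leftcontour{}{}} e^{-w^3/3+wx}\,\dx w=\tfrac{1}{2\pi\I}\int_{\rightcontour{}{}} e^{v^3/3-vx}\,\dx v$ and the identity $\tfrac{1}{v-w}=\int_0^\infty e^{-\lambda(v-w)}\,\dx\lambda$ (valid since $\operatorname{Re}(v)>\operatorname{Re}(w)$ on the chosen contours), reproduces the Airy kernel $\kai$ or its mixed derivatives; in $K_{21}$, the extra $v,w$ factors translate into $\partial_\zeta,\partial_\xi$ via $\partial_x\Ai(x)=-\tfrac{1}{2\pi\I}\int v e^{v^3/3-vx}\,\dx v=\tfrac{1}{2\pi\I}\int w e^{-w^3/3+wx}\,\dx w$. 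The residual pieces containing $\tfrac{1}{w+\tau}$ or $\tfrac{1}{v+\tau}$ factor as a single Airy function times a one-dimensional integral of the form $\tfrac{1}{2\pi\I}\int_{\leftcontour{-\tau}{}} \tfrac{e^{-w^3/3+w\xi}}{w+\tau}\,\dx w$, which I would evaluate either by the Laplace representation $\tfrac{1}{w+\tau}=\int_0^\infty e^{-(w+\tau)\lambda}\,\dx\lambda$ or by deforming the contour across the pole at $w=-\tau$ to pick up the residue $e^{\tau^3/3-\tau\xi}$. The Airy Laplace identity $\int_{-\infty}^\infty e^{st}\Ai(t)\,\dx t=e^{s^3/3}$ then recombines these into the Airy convolutions and exponential boundary terms present in \eqref{E:brtau}.

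The entry $K_{12}$ is the most delicate part of the computation. Its piecewise definition in \eqref{E:GBR} contains, in each of the cases $\xi>\zeta$ and $\xi\leq\zeta$, a main two-contour double integral plus a small-loop contour around $-\bfy$ or $\bfx$. The strategy is to push one of the Airy-type contours across the pole at $-\tau$ (respectively $\tau$) so as to absorb the small loop into the main contour at the cost of a residue. Once the main integrals are clean, a double Laplace representation of $\tfrac{1}{(w+\tau)(v+\tau)}$ identifies the main piece with $\int_0^\infty\!\int_0^\infty \kai(\xi+\gamma,\zeta+\lambda) e^{\tau(\lambda-\gamma)}\,\dx\gamma\,\dx\lambda$; the two single-residue terms produce the $\int_0^\infty\Ai(\xi+\gamma)\gamma e^{-\tau\gamma}\,\dx\gamma$ and $\int_0^\infty\Ai(\zeta+\lambda)\lambda e^{\tau\lambda}\,\dx\lambda$ contributions, and the double residue combined with the $\Id_{\xi>\zeta}$ splitting yields the piecewise polynomial $-e^{\tau(\xi-\zeta)}(\tau^2-\min\{\xi,\zeta\})$ term.

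The hard part will be the bookkeeping of signs, residue orientations, and indicator contributions, particularly for $K_{12}$. A minor expected subtlety is that the direct computation produces the kernel entries with $\tau\mapsto -\tau$ relative to \eqref{E:brtau}; this is reconciled by the Fredholm-determinant-preserving operation combining matrix transposition $K_{ij}(\xi,\zeta)\mapsto K_{ji}(\zeta,\xi)$ with the index swap $\sigma=(12)$, which together send $K^{-\tau,\tau}$ to $K^{{\rm BR},\tau}$ and reflect the antidiagonal reflection symmetry of the stationary LPP under $\tau\mapsto -\tau$. Verifying this matching entry by entry closes the proof.
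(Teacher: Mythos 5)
Your proposal is correct and follows essentially the same route as the paper: specialize the generalized Baik--Rains kernel at $k=\ell=1$, split off a $\tfrac{1}{v-w}$ piece via partial fractions, convert the remaining simple-pole factors via contour deformations (picking up residues) and Laplace representations such as $\tfrac{1}{w-\tau}=-\int_0^\infty e^{(w-\tau)\gamma}\,\dx\gamma$, and reassemble using the Airy integral formula \eqref{E:ai}. The one place where you diverge from the paper is the handling of the $\tau\mapsto-\tau$ mismatch: the paper simply invokes $F_{{\rm BR},\tau}=F_{{\rm BR},-\tau}$ at the level of distribution functions, so that one may take $\bfx=\tau$, $\bfy=-\tau$ in \eqref{E:gT} and the direct computation then produces exactly the entries of \eqref{E:brtau}; you instead keep $\bfx=-\tau$, $\bfy=\tau$ as dictated literally by \eqref{E:gT} and reconcile at the end via the Fredholm-determinant-preserving operation $K_{ij}(\xi,\zeta)\mapsto K_{\sigma(j)\sigma(i)}(\zeta,\xi)$ with $\sigma=(12)$. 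Both work and yield the same identity; the paper's shortcut is cleaner, while yours requires a careful entry-by-entry check of the claimed kernel symmetry (which indeed holds). One small imprecision worth fixing: for the residual integrals $\tfrac{1}{2\pi\I}\int_{\leftcontour{-\tau}{}}\tfrac{e^{-w^3/3+w\xi}}{w+\tau}\,\dx w$, the Laplace representation is not an alternative to the contour deformation but must follow it --- on $\leftcontour{-\tau}{}$ the sign of $\operatorname{Re}(w+\tau)$ is not constant, so one first deforms to $\leftcontour{}{-\tau}$ (picking up the residue) and only then applies the convergent Laplace formula. The paper's own proof writes out only the $K_{12}$ entry (stating the others are analogous), so your sketched treatment of $K_{11}$, $K_{22}$, $K_{21}$ supplies details the paper omits.
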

We will derive Corollary~\ref{cor:toBR} in Section~\ref{subsectBR}.

Relative to the classical representation of the Baik--Rains distribution, which is typically evaluated using Painlev\'e transcendents~\cite{PS02b,PSKPZ}, the alternative expression on the right-hand side of~\eqref{E:oldAndNew} enables a more efficient numerical computation via Bornemann's method~\cite{Born08}. Figure~\ref{fig:compare} illustrates the graph of $F_{{\rm BR},\tau}$ computed using both approaches.

 	\begin{figure}
 		\centering
 		\includegraphics[scale=1.3]{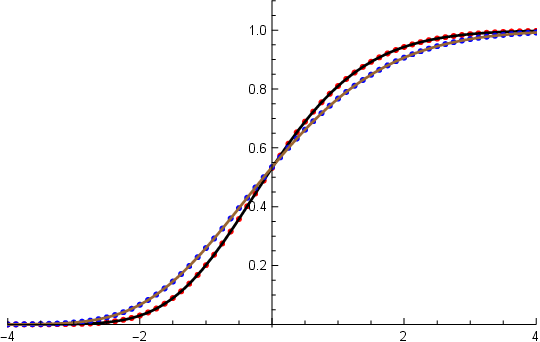}
 		\caption{Numerical comparison of both sides of~\eqref{E:oldAndNew} for $\tau \in \{1/2, 1\}$. The red (resp.\ blue) dots represent the right-hand side of~\eqref{E:oldAndNew} with $\tau = 1/2$ (resp.\ $\tau = 1$), computed using Bornemann's method~\cite{Born08}. The black (resp.\ purple) curve corresponds to the left-hand side of~\eqref{E:oldAndNew} for $\tau = 1/2$ (resp.\ $\tau = 1$), evaluated via the Painlev\'e transcendents. The corresponding code can be found in the BonnData repository~\cite{FK2/5PUB1P_2025}.	
 		}\label{fig:compare}
 	\end{figure}

\subsection{Variational formula}
	We next deduce a variational formula for the generalized Baik--Rains distribution. Let $k,\ell\in\Z_{>0}$, $t\geq 0$ and $B_1,\ldots,B_k,\tilde B_1,\ldots,\tilde B_\ell$ be independent standard Brownian motions. Define two independent Brownian last passage percolation~\cite{OCY01,Bar01,GTW00}
\begin{equation}\label{E:blpp}
	\begin{aligned}
		&\mathcal B_k(t):=\sup _{0\leq u_1\leq \ldots\leq u_{k-1}\leq t} \sum_{r=1}^{k}[B_r(u_r)-B_r(u_{r-1})],\\
		&\wt{\mathcal B}_\ell(t):=\sup _{0\leq v_1\leq \ldots\leq v_{\ell-1}\leq t} \sum_{r=1}^\ell[\tilde B_r(v_r)-\tilde B_r(v_{r-1})],
	\end{aligned}
\end{equation}
where we identify $u_0=v_0=0$, $u_k=v_\ell=t$.

Also, we denote by $\mathcal A_2(t)$ the Airy$_2$ process~\cite{PS02}. For the case of $\bma=\bmb=0$, the boundaries in our LPP under scaling limit converge to the processes in \eqref{E:blpp}, while the bulk to an Airy$_2$ process. In particular, the limiting distribution of our LPP, for which we have an explicit Fredholm determinant formula, also admits the following variational formula.
\begin{thm}[Variational Formula]\label{thm:var}
	For any $s\in\R$,
	\begin{equation}
		\Pb\left(\max_{t\in\R}\left\{\sqrt{2}\mathcal B_k(t)\Id_{t\geq 0}+\sqrt{2}\wt{\mathcal B}_\ell(-t)\Id_{t<0}+\mathcal A_2(t)-t^2\right\}\leq s\right)=F_{k,\ell,\mathbf{0},\mathbf{0}}(s),			
	\end{equation}
	where the right hand side is the generalized Baik--Rains distribution.
\end{thm}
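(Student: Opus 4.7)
My plan is to combine Theorem~\ref{thm:AC_conj}, which identifies the limit as the Fredholm determinant $F_{\ell,k,\mathbf{0},\mathbf{0}}$, with a direct variational derivation of the same limit obtained by decomposing the LPP according to where its geodesic leaves the thick boundary and enters the bulk. Specialising Theorem~\ref{thm:AC_conj} to $\tilde\bma=\mathbf{0}$, $\tilde\bmb=\mathbf{0}$, $\bfx=\mathbf{0}$, $\bfy=\mathbf{0}$, $\tau=0$ gives
\[
\lim_{N\to\infty}\Pb\bigl(L^{\mathbf{0},\mathbf{0}}_{N,N}\le 4N+(16N)^{1/3}s\bigr)=F_{\ell,k,\mathbf{0},\mathbf{0}}(s),
\]
which, by the row--column reflection symmetry of the model at zero parameters, equals $F_{k,\ell,\mathbf{0},\mathbf{0}}(s)$. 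It therefore suffices to show that the same prelimit converges to the CDF of the maximum appearing in the theorem.

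Using \eqref{E:maxId} and conditioning further on the first bulk point of the geodesic, I would write
\[
L^{\mathbf{0},\mathbf{0}}_{N,N}=\max\Bigl\{\max_{1\le i\le N}\bigl[L^{\text{row}}_{i}+L^{\text{bulk}}_{(i,1)\to(N,N)}\bigr],\ \max_{1\le j\le N}\bigl[L^{\text{col}}_{j}+L^{\text{bulk}}_{(1,j)\to(N,N)}\bigr]\Bigr\},
\]
where $L^{\text{row}}_{i}:=L^{\mathbf{0},\mathbf{0}}_{(0,-k+1)\to(i,0)}$ and $L^{\text{col}}_{j}:=L^{\mathbf{0},\mathbf{0}}_{(-\ell+1,0)\to(0,j)}$ are LPPs inside the $k$-row and $\ell$-column boundary blocks, and the bulk LPPs use independent $\Exp(1)$ weights and are therefore independent of both boundary pieces. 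With $i$, $j$ of order $(2N)^{2/3}$, a Donsker invariance principle applied to the $k$ rows (resp.\ $\ell$ columns) of i.i.d.\ $\Exp(1/2)$ weights yields the joint functional convergence of the boundary pieces to $\sqrt{2}\,\mathcal{B}_k$ and $\sqrt{2}\,\wt{\mathcal{B}}_\ell$, while the standard convergence of the exponential point-to-point LPP to the Airy$_2$ process minus a parabola (see \cite{Jo00b,BFP09}) gives the two bulk pieces as $\mathcal{A}_2(t)-t^2$ with $t\ge 0$ indexing the row arm and $t<0$ indexing the column arm (the deterministic linear shifts $2i$ and $2j$ cancel exactly against the dominant bulk displacement on the scale $(2N)^{2/3}$).

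The delicate point is the interchange of the maximum with the scaling limit, since \emph{a priori} the maximum in the prelimit could be attained at an exit index escaping any compact rescaled window. This is where I would invoke Proposition~\ref{p:exit}, which provides an exponential tail bound for the exit point (measured in units of $(2N)^{2/3}$) uniformly in $N$. With this tail bound I can truncate the optimisation to $|t|\le T$ up to a probability error vanishing as $T\to\infty$, pass to the joint functional limit on the compact window, and then send $T\to\infty$, using the analogous superexponential tail of the limit variational process (immediate from standard Brownian and Airy$_2$ estimates) to conclude that the limit of the truncated maxima agrees with the maximum over all of $\R$. Matching the resulting limit with Theorem~\ref{thm:AC_conj} yields Theorem~\ref{thm:var}.

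I expect the principal obstacle to be exactly Proposition~\ref{p:exit}: the argument of \cite{BHA20} is built around the single-boundary case $k=\ell=1$ and a careful comparison with a stationary model, and adapting it to the thick boundary requires controlling the interaction of several inhomogeneous rows and columns while preserving sub-exponential decay of the rescaled exit distance. Once that bound is in hand, the remaining steps are essentially a routine combination of Donsker's theorem, the Johansson-type bulk asymptotics, and a truncation/compactness argument.
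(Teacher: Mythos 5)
Your proposal reproduces the paper's proof essentially step by step: specialise Theorem~\ref{thm:AC_conj} to zero parameters for the right-hand side, decompose the LPP according to the exit point of the geodesic from the thick boundary, control that exit point via the exponential tail bound in Proposition~\ref{p:exit}, pass to the joint functional limits (Glynn--Whitt for the thin-strip boundary arms, Johansson/Pr\"ahofer--Spohn for the Airy$_2$ bulk piece), and then truncate the optimisation to $|t|\le M$ and let $M\to\infty$. The only point where you add something the paper leaves implicit is the swap $F_{\ell,k,\mathbf{0},\mathbf{0}}=F_{k,\ell,\mathbf{0},\mathbf{0}}$, which you justify by the row--column reflection symmetry of the model at zero parameters.
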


As in~\cite{CFS16,CLW16}, one can consider also non-integrable boundaries. For instance, if the boundaries are replaced by generic i.i.d. random variables such that under scaling they converges (weakly) to the two-sided Brownian last passage percolation (and some control of the exit time of the geodesics as in the proof of Theorem~\ref{thm:var}), then Theorem~\ref{thm:var} still holds true.

This theorem is proved in Section~\ref{sec:var}. In the special case with $k = \ell = 1$, the result recovers Corollary 2.4 of~\cite{CFS16}.

\subsection{Inhomogeneous stationary LPP}

To state our final result, we turn to the model with the weights $\wh{\omega}$ given by \eqref{E:StThickBd}. As mentioned in the introduction, the increments of the associated last passage process $\wh{L}^{\bma, \bmb}$ are stationary in the following sense. For any $m, n \in \Z$ with $m > -\ell$ and $n > -k$,
\begin{align}
\label{E:IncSt}
\begin{split}
(\wh{L}^{\bma, \bmb}_{i, n}-\wh{L}^{\bma, \bmb}_{i-1, n}: i \in \Z \text{ and } i > -\ell+1) &\stackrel{\rm{(d)}}{=} (\wh{\omega}_{i, -k+1}: i \in \Z \text{ and } i > -\ell+1), \\
(\wh{L}^{\bma, \bmb}_{m, j}-\wh{L}^{\bma, \bmb}_{m, j-1}: j \in \Z \text{ and } j > -k+1) &\stackrel{\rm{(d)}}{=} (\wh{\omega}_{-\ell+1, j}: i \in \Z \text{ and } j > -k+1).
\end{split}
\end{align}
In other words, the last passage increments along any row (resp.\ column) have the same joint distribution as the weights on the lowest row $-k+1$ (resp.\ column $-\ell+1$). The stationary property in \eqref{E:IncSt} is obtained from Proposition 4.1 of~\cite{E16} by choosing the parameters there as
\begin{equation}
\begin{aligned}
a_i &=
\begin{cases}
\alpha_{i+1} = (16N)^{-1/3}x_{i+1} \quad &\text{ if } i < \ell, \\
1/2 \quad &\text{ if } i \ge \ell,
\end{cases} \\
b_j &= \begin{cases}
\beta_{j+1} = (16N)^{-1/3}y_{j+1} \quad &\text{ if } i < k, \\
1/2 \quad &\text{ if } j \ge k,
\end{cases}
\end{aligned}
\end{equation}
for $i, j \in \Z_{>0}$ and $z = 0$.

Our next result concerns the distribution function of \( \wh L^{\bma, \bmb}_{m,n} \). We will obtain this result for arbitrary parameters $\bma$ and $\bmb$ subject to the assumption that $\alpha_i + \beta_j \ge 0$ for all $i, j$.
The corresponding kernel is a slight modification of the one in Theorem~\ref{thm:expKernel}. Define
\begin{equation}\label{E:ehat}
	\widehat E(x,w;y,v):=E(x,w;y,v)\prod_{i=2}^\ell\frac{w-\alpha_i}{v-\alpha_i}\prod_{j=2}^k\frac{v+\beta_j}{w+\beta_j}.
\end{equation}
\begin{prop}\label{p:sta}
	For $m,n \in \Z_{>0}$ and $\bma \in \R^\ell$, $\bmb \in \R^k$ such that $\alpha_i+\beta_j\geq 0$ for all $(i,j)\in\llbracket\ell\rrbracket\times\llbracket k\rrbracket$, we have
	\begin{equation}
		\Pb\left(\wh L^{\bma, \bmb}_{m,n} \leq s\right) = \det\left(\Id - P_s \wh K_{m,n}^{\exp} P_s\right)_{L^2(\R)},
	\end{equation}
	where $\wh K_{m,n}^{\exp}$ is a $2\times 2$ matrix kernel with entries
	\begin{equation}
		\begin{aligned}
			\wh K_{m,n,11}^{\exp}(x, y) &:= \frac{-1}{(2\pi\I)^2} \oint_{\Gamma_{-1/2,-\bmb}} \dx w\oint_{\Gamma_{1/2,\bma_{2:\ell}}} \dx v  \,
			\frac{\wh E(x, w; y, v)}{v-w}  \frac{v + \beta_1}{w + \beta_1}, \\		
			K_{m,n,22}^{\exp}(x, y) &:= \frac{-1}{(2\pi\I)^2}\oint_{\Gamma_{-1/2,-\bmb_{2:k}}} \dx w \oint_{\Gamma_{1/2,\bma}} \dx v  \,
			\frac{\wh E(x, w; y, v)}{v-w} \frac{w - \alpha_1}{v - \alpha_1}, \\ 		
			K_{m,n,21}^{\exp}(x, y) &:= \frac{-1}{(2\pi\I)^2} \oint_{\Gamma_{-1/2,-\bmb_{2:k}}} \dx w \oint_{\Gamma_{1/2,\bma_{2:\ell}}} \dx v \,
			\frac{\wh E(x, w; y, v)}{v-w}   (v + \beta_1)   (w - \alpha_1), \\ 	
			K_{m,n,12}^{\exp}(x, y) &:= \frac{-\Id_{x>y}}{(2\pi\I)^2}
			\oint_{\Gamma_{-1/2,-\bmb}} \dx w\oint_{\Gamma_{1/2,\bma,w}} \dx v
			\,\frac{\wh E(x, w; y, v)}{v-w}\frac{1}{   (w + \beta_1)   (v - \alpha_1) }\\
			&+\frac{-\Id_{x\leq y}}{(2\pi\I)^2}
			\oint_{\Gamma_{1/2,\bma}} \dx v\oint_{\Gamma_{-1/2,-\bmb,v}} \dx w
			\,\frac{\wh E(x, w; y, v)}{v-w}\frac{1}{   (w + \beta_1)   (v - \alpha_1) },
		\end{aligned}		
	\end{equation}
	where the contours are chosen to be disjoint.
\end{prop}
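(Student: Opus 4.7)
The plan is to parallel the proof of Theorem~\ref{thm:expKernel} after first reducing to a clean LPP model via a max decomposition. The first step uses $\wh\omega_{-\ell+1,-k+1}=0$: since every up-right path from $(-\ell+1,-k+1)$ to $(m,n)$ takes its first step either to $(-\ell+1,-k+2)$ or to $(-\ell+2,-k+1)$,
\begin{equation*}
\wh L^{\bma,\bmb}_{m,n} = \max\left\{\wh L_{(-\ell+1,-k+2)\to(m,n)},\ \wh L_{(-\ell+2,-k+1)\to(m,n)}\right\}.
\end{equation*}
Unlike in \eqref{E:maxId}, both LPPs on the right cross the corner block. Crucially, however, if one sets $a_i:=\alpha_{i+\ell}$ for $-\ell<i\leq 0$ and $a_i:=1/2$ for $i>0$, and analogously $b_j:=\beta_{j+k}$ for $-k<j\leq 0$ and $b_j:=1/2$ for $j>0$, then every weight $\wh\omega_{i,j}$ visited by either LPP satisfies $\wh\omega_{i,j}\sim\Exp(a_i+b_j)$. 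Hence the pair above are simply two marginal last passage times in a standard, unbroken Borodin--P\'ech\'e inhomogeneous LPP.

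From here I would follow the scheme of Section~\ref{SectFiniteN}. First I assume $\alpha_i,\beta_j>0$, so that all rates are positive. Using the Schur process associated to the Borodin--P\'ech\'e model, obtained as a scaling limit from geometric LPP, I would compute the joint distribution of the pair $(\wh L_{(-\ell+1,-k+2)\to(m,n)},\wh L_{(-\ell+2,-k+1)\to(m,n)})$ and apply the Eynard--Mehta theorem to express it as a $2\times 2$ Fredholm determinant; the formula for the max then gives the stated kernel $\wh K_{m,n}^{\exp}$. The asymmetric appearance of $\alpha_1,\beta_1$ versus $\alpha_{2:\ell},\beta_{2:k}$ in the four entries reflects exactly the shifted starting points: $\alpha_1$ and $\beta_1$ are the parameters of the starting column $i=-\ell+1$ and starting row $j=-k+1$ of the two shifted LPPs, whereas $\alpha_{2:\ell}$ and $\beta_{2:k}$ play the role of interior boundary parameters in the Schur specialization and therefore enter the product factors of $\wh E$ in the symmetric form $(w-\alpha_i)/(v-\alpha_i)$ and $(v+\beta_j)/(w+\beta_j)$.

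Finally, analytic continuation in the parameters extends the formula from the positivity regime to the stated regime $\alpha_i+\beta_j\geq 0$; the disjointness condition imposed on the $v$- and $w$-contours is what enables this extension without picking up spurious residues as some $\alpha_i$ or $-\beta_j$ migrate past $\pm 1/2$ or past each other. The main obstacle is the Schur-process / Eynard--Mehta calculation with two distinct shifted starting points and the entry-by-entry matching with the explicit formulas for $\wh K_{m,n}^{\exp}$, in particular the rather intricate two-case structure of $\wh K_{m,n,12}^{\exp}$, which splits according to $\Id_{x>y}$ versus $\Id_{x\leq y}$ and encodes the $v=w$ residue contribution that arises when the relative ordering of the $v$- and $w$-contours is reversed.
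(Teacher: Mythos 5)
Your proposal is correct and follows essentially the same route as the paper: you perform the max decomposition at the zero-weight corner $(-\ell+1,-k+1)$, observe that both branches live in a Borodin--P\'ech\'e inhomogeneous LPP (with the shifted starting points absorbing $\alpha_1$ and $\beta_1$ while $\alpha_{2:\ell},\beta_{2:k}$ merge into the bulk specializations), extract the kernel from the associated Schur process via the geometric-to-exponential limit, and finish by analytic continuation to $\alpha_i+\beta_j\ge 0$. This is precisely the paper's proof, which records the corresponding Schur weight $s_{\lambda^{(1)}}(\sqrt{q},\ldots,\sqrt{q},a_2,\ldots,a_\ell)\,s_{\lambda^{(1)}/\mu}(b_1)\,s_{\lambda^{(2)}/\mu}(a_1)\,s_{\lambda^{(2)}}(\sqrt{q},\ldots,\sqrt{q},b_2,\ldots,b_k)$ and the modified function $\wh Q$, then defers the remaining computations to the proof of Theorem~\ref{thm:expKernel}.
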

This theorem is proved in Section~\ref{SectFiniteN}.

We now investigate last-passage percolation in the large-$N$ regime.
As in~\eqref{eq2.10}, we fix the endpoint and denote the associated last-passage time by $\wh L_{N,\tau}^{\bfx,\bfy}$.
The discussion above shows that, in the limit, only those $x_i$ and $y_j$ with $\tilde\alpha_i=\tilde\beta_j=0$ in~\eqref{eq2.11} contribute.
Accordingly, we choose the parameters as
\begin{equation}
	\bma = (16N)^{-1/3} \bfx, \quad
	\bmb = (16N)^{-1/3} \bfy,
\end{equation}
with $x_i + y_j > 0$ for all $(i,j) \in \llbracket \ell \rrbracket \times \llbracket k \rrbracket \setminus \{(1,1)\}$ and $x_1 + y_1 = 0$.  Define
\begin{equation}
	h(\xi,w;\zeta,v) := \frac{e^{v^3/3 - v\zeta}}{e^{w^3/3 - w\xi}} \frac{1}{v - w} \prod_{i=2}^{k} \frac{v + y_i}{w + y_i} \prod_{j=2}^\ell \frac{w - x_j}{v - x_j}.
\end{equation}
Let $\wh f_{ij}(\xi,w;\zeta,v)$ denote the integrand of $K_{ij}^{\bfx,\bfy}$ as defined in Theorem~\ref{thm:AC_conj}, with the term $\frac{e^{v^3/3 - v\zeta}}{e^{w^3/3 - w\xi}}  \frac{1}{v - w}$ replaced by $h(\xi,w;\zeta,v)$, and with $k = \ell = 1$.
\begin{prop}\label{P:gBR2}
	For all $s \in \R$, the following convergence holds:
	\begin{equation}
		\lim_{N \to \infty} \Pb \left(\wh L^{\bfx, \bfy}_{N,\tau} \leq 4N + (16N)^{1/3} s \right)
		= \det(\Id-P_s\wh K^{\bfx-\tau, \bfy+\tau}P_s)_{L^2(\R)},
	\end{equation}
	where $\wh K^{\bfx, \bfy}$ is a $2\times 2$ matrix kernel with entries
	\begin{equation}
		\begin{aligned}
			\wh K^{\bfx, \bfy}_{11}(\xi,\zeta)&:=\frac{1}{\left(2\pi \I\right)^2}\int_{\;\leftcontour{-\bfy}{}}\dx w   \int_{\;\rightcontour{w}{\bfx_{2:\ell}}}\dx v\wh f_{11}(\xi,w;\zeta,v) \\
			\wh K^{\bfx, \bfy}_{22}(\xi,\zeta)&:=\frac{1}{\left(2\pi \I\right)^2} \int_{\;\rightcontour{}{\bfx}}\dx v  \int_{\;\leftcontour{-\bfy_{2:k}}{ v}}\dx w\wh  f_{22}(\xi,w;\zeta,v) \\
			\wh K^{\bfx, \bfy}_{21}(\xi,\zeta)&:=\frac{1}{\left(2\pi \I\right)^2} \int_{\;\rightcontour{}{\bfx_{2:\ell} }}\dx v  \int_{\;\leftcontour{-\bfy_{2:k}}{ v}}\dx w\wh  f_{21}(\xi,w;\zeta,v) \\
			\wh K^{\bfx, \bfy}_{12}(\xi,\zeta)&:=\frac{\Id_{\xi>\zeta}}{\left(2\pi \I\right)^2}\int_{\;\rightcontour{}{-\bfy,\bfx}}\dx v \bigg[ \int_{\;\leftcontour{}{-\bfy, \bfx,v}}\dx w\wh  f_{12}(\xi,w;\zeta,v)+\oint_{ \Gamma_{-\bfy}} \dx w \wh f_{12}(\xi,w;\zeta,v)\bigg]\\
			&\quad+\frac{\Id_{\xi\leq \zeta}}{\left(2\pi \I\right)^2}
			\int_{\leftcontour{-\bfy,\bfx}{}}\dx w\bigg[\int_{\rightcontour {-\bfy,\bfx,w}{}}\dx v\wh  f_{12}(\xi,w;\zeta,v)- \oint_{\Gamma_{\bfx}} \dx v\wh  f_{12}(\xi,w;\zeta,v)\bigg].
		\end{aligned}
	\end{equation}
\end{prop}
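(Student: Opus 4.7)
The plan is to mirror the proof of Theorem~\ref{thm:AC_conj}, using Proposition~\ref{p:sta} as the finite-$N$ starting point in place of Theorem~\ref{thm:expKernel}. Under the standing hypothesis $x_i + y_j \geq 0$ (with equality only at $(1,1)$), one has $\alpha_i + \beta_j = (16N)^{-1/3}(x_i + y_j) \geq 0$ for all $(i,j) \in \llbracket \ell \rrbracket \times \llbracket k \rrbracket$, so Proposition~\ref{p:sta} applies and gives
\begin{equation*}
\Pb\bigl(\wh L^{\bma, \bmb}_{m, n} \leq s\bigr) = \det\bigl(\Id - P_s \wh K^{\exp}_{m, n} P_s\bigr)_{L^2(\R)}
\end{equation*}
with the explicit $2 \times 2$ kernel whose entries carry the extra prefactor $\wh E / E$ from~\eqref{E:ehat}.

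Starting from this identity, one performs the same saddle-point, steep-descent asymptotic analysis as in Section~\ref{sectAsymptotics}. The relevant saddle of the exponent of $E_{m, n}(x, w; y, v)$ is degenerate and located at $w = v = 0$, motivating the rescalings $w = W/(16N)^{1/3}$, $v = V/(16N)^{1/3}$, $x = 4N + (16N)^{1/3}\xi$, $y = 4N + (16N)^{1/3}\zeta$, followed by the Galilean shift $W \mapsto W + \tau$, $V \mapsto V + \tau$ forced by the endpoint $(m, n) = (N - \tau(2N)^{2/3},\, N + \tau(2N)^{2/3})$; this shift is precisely what induces the parameter change $\bfx \mapsto \bfx - \tau$, $\bfy \mapsto \bfy + \tau$ in the limit kernel, and the associated $\tau^2$ constants from completing the cube are absorbed via the standard conjugation of the integrand. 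Under this change of variables, each ratio $(w - \alpha_i)/(v - \alpha_i)$ converges to $(W - x_i)/(V - x_i)$ and each $(v + \beta_j)/(w + \beta_j)$ to $(V + y_j)/(W + y_j)$, so the factor $\wh E / E$ passes to $\prod_{i=2}^{\ell}(W - x_i)/(V - x_i) \prod_{j=2}^{k}(V + y_j)/(W + y_j)$; combining this with the remaining single factors that appear in each entry of $\wh K^{\exp}_{m, n}$ reproduces precisely the integrands $\wh f_{ij}$ built from the function $h$ in the proposition. Meanwhile, the finite-$N$ contours $\Gamma_{\pm 1/2, \ldots}$ deform to steep-descent paths of Airy type $\leftcontour{}{}$ and $\rightcontour{}{}$, with the poles $\bma_{2:\ell}$ and $-\bmb_{2:k}$ becoming the limiting pole sets $\bfx_{2:\ell} - \tau$ and $-(\bfy_{2:k} + \tau)$ in the rescaled complex plane.

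The principal technical obstacle, just as for Theorem~\ref{thm:AC_conj}, is to produce $N$-uniform dominant bounds for the rescaled kernel entries along the deformed contours in order to justify passage of the limit inside the Fredholm series, together with pointwise steep-descent estimates away from the critical point. The entry $\wh K^{\exp}_{m, n, 12}$ is the most delicate because of its $\Id_{x > y}$ versus $\Id_{x \leq y}$ split: deforming the two double integrals onto the Airy contours requires pushing one variable contour through the poles at $-\bmb$ or $\bma$, and the residues harvested in this process are exactly what produce the circular integrals $\oint_{\Gamma_{-\bfy}}$ and $\oint_{\Gamma_{\bfx}}$ appearing in the formula for $\wh K^{\bfx - \tau, \bfy + \tau}_{12}$. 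Once the uniform tail bounds and the residue bookkeeping are handled as in Section~\ref{sectAsymptotics}, combining them with the three convergence statements above identifies the limit as $\det\bigl(\Id - P_s \wh K^{\bfx - \tau, \bfy + \tau} P_s\bigr)_{L^2(\R)}$, completing the proof sketch.
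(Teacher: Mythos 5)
Your proposal is correct and takes essentially the same route as the paper, which disposes of Proposition~\ref{P:gBR2} in two sentences by pointing to the proof of Theorem~\ref{thm:AC_conj} with Proposition~\ref{p:sta} replacing Theorem~\ref{thm:expKernel} as the finite-$N$ input, together with the remark that $x_i + y_j > 0$ off $(1,1)$ keeps the limiting contours well-defined. Your sketch simply fleshes out the same steps — rescaling near the degenerate saddle $w=v=0$, the $\tau$-shift producing $\bfx \mapsto \bfx-\tau$, $\bfy \mapsto \bfy+\tau$, the convergence of the $\wh E/E$ products to the corresponding finite products in $\wh f_{ij}$, the residue bookkeeping for the $12$-entry, and the dominated-convergence/Hadamard argument for the Fredholm series — all of which mirror Section~\ref{sectAsymptotics} as the paper intends.
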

\begin{proof}	
	The proof follows the same steps as that of Theorem~\ref{thm:AC_conj}, once Proposition~\ref{p:sta} is established.
	Therefore, we omit the details.In particular, observe that under our assumption
	$x_i+y_j>0$ for all $i\in\llbracket\ell\rrbracket$ and
	$j\in\llbracket k\rrbracket$, the contour associated with
	each entry is well-defined.
\end{proof}

\section{Proof of the finite-$N$ formula}\label{SectFiniteN}
In this section, we prove Theorem~\ref{thm:expKernel}. To derive the distribution function of the last passage time $\lpp_{m,n}$ under the setting of~\eqref{E:ThickBd}, we relate it to a Schur process. For this purpose it is more convenient to shift the picture to have the bottom-left point to be at $(1,1)$ instead of $(-\ell+1,-k+1)$, and place the thick boundaries on the north and east sides. More precisely, for fixed $m,n \in \mathbb{Z}_{\geq 1}$, let $(\omega_{i,j})_{1 \leq i \leq m+\ell,\ 1 \leq j \leq n+k}$ be a family of independent random variables distributed as
	\begin{equation} \label{E:Exp_Lpp2}
		\omega_{i,j} =
		\begin{cases}
			\Exp\left(\frac{1}{2} + \alpha_{m+\ell+1-i}\right), & \text{if } m+1 \leq i \leq m+\ell,\ 1 \leq j \leq n, \\
			\Exp\left(\frac{1}{2} + \beta_{n+k+1-j}\right), & \text{if } 1 \leq i \leq m,\ n+1 \leq j \leq n+k, \\
			\Exp(1), & \text{if } 1 \leq i \leq m,\ 1 \leq j \leq n, \\
			0, & \text{if } m+1 \leq i \leq m+\ell,\ n+1 \leq j \leq n+k,
		\end{cases}
	\end{equation}
	where $\bma$ and $\bmb$ are as in~\eqref{E:ThickBd}.

Then $\lpp_{m,n}$ under the setting of~\eqref{E:ThickBd} coincides with the LPP from $ (1,1)$ to $ (m+\ell, n+k)$ under the setting of~\eqref{E:Exp_Lpp2}, which we continue to denote by $\lpp_{m+\ell, n+k}$. This LPP is a standard limit of a geometric LPP, which is described by a Schur process with a well-known correlation kernel.

\subsection{Geometric LPP}
The geometric LPP is given as follows. Let $m,n \in \Z_{>0}$ be fixed. Choose vectors $ \boldsymbol{a} \in \R_{>0}^{\ell}$, $\boldsymbol{b} \in \R_{>0}^{k}$, and a parameter $q \in (0,1)$ such that
	\begin{equation}\label{asmp: glpp}
		0 < \sqrt{q}\, \boldsymbol{a},\, \sqrt{q}\, \boldsymbol{b} < 1.
	\end{equation}
	For $\gamma \in [0,1]$, we denote by $\mathrm{Geom}[\gamma]$ the geometric distribution with parameter $\gamma$. That is, if $X \sim \mathrm{Geom}[\gamma]$, then
	\begin{equation}
		\Pb(X = k) = (1 - \gamma) \gamma^k \quad \text{ for } k \in \mathbb{Z}_{\geq 0}.
	\end{equation}
For $ (i,j)\in\llbracket m+\ell\rrbracket\times\llbracket n+k\rrbracket$, the weights for the geometric LPP are then defined independently with the following marginals (see also Figure~\ref{F:glpp}):
	\begin{figure}
		\centering
		\includegraphics[height=7cm]{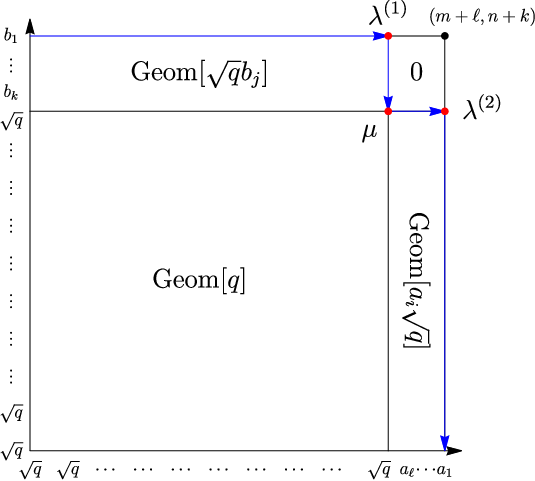}
		\caption{Illustration for geometric LPP. The weight parameters are the product of the numbers noted along the axis, except for the ones which are equal to zero. The latter random variables are however irrelevant for the values of the LPP at $(m,n+k)$ and $(m+\ell,n)$, related to the partitions $\lambda^{(1)}$ and $\lambda^{(2)}$, see \eqref{eq3.10}.}
\label{F:glpp}
	\end{figure}
	\begin{align}\label{E:PrelimitGeo}
		\omega_{(i, j)} =
		\begin{cases}
			\Geom[q], & \text{if } 1\leq i\leq m,\ 1\leq j\leq n, \\
			\Geom[\sqrt{q} a_{m+\ell+1-i}], & \text{if } m+1\leq i\leq m+\ell ,\ 1\leq j \le n, \\
			\Geom[\sqrt{q} b_{n+k+1-j}], & \text{if } 1\leq i \leq m,\ n+1\leq j\leq n+k, \\
			0, & \text{if } m+1\leq i\leq m+\ell,\ n+1\leq j\leq n+k.
		\end{cases}
	\end{align}
	Let $L_{(m,n)}^{\mathrm{geo}}$ denote the geometric last passage time from $ (1,1)$ to $ (m,n)$. Define
	\begin{equation}\label{E:Q}
		Q(x,w;\, y,v) := \left(\frac{1 - \sqrt{q}\, w}{1 - \sqrt{q}\, v} \right)^m \left(\frac{1 - \sqrt{q}/v}{1 - \sqrt{q}/w} \right)^n \frac{w^{x-1}}{v^y}.
	\end{equation}
	We start by giving an explicit formula for the distribution function of $L_{(m,n)}^{\mathrm{geo}}$.	
	\begin{prop}\label{P:geo}
		For the geometric LPP defined in~\eqref{E:PrelimitGeo} with $0<\bfa,\bfb<1$,
		\begin{equation}
			\Pb\left(L_{(m+\ell,n+k)}^{\mathrm{geo}}\leq s\right)=\det\left(\Id-P_sK_{m,n}^{{\rm geo}}P_{s}\right)_{\ell^2(\Z)}
		\end{equation}
where $P_s=\Id_{x>s}$ and $K_{m,n}^{{\rm geo}}$ is a $2 \times 2$ matrix kernel whose entries are given by
		\begin{align}
				K_{m,n,11}^{{\rm geo}}(x,y)&:=\frac{-1}{\left(2\pi\I\right)^2} \oint_{\Gamma_{\sqrt{q},\bfb}}\dx w \oint_{\Gamma_{1/\sqrt{q}}} \dx v  \frac{Q(x,w;y,v)}{v-w} \prod_{j=1}^k \frac{1-b_j/v}{1-b_j/w} ,\nonumber \\
				K_{m,n,22}^{{\rm geo}}(x,y)&:=\frac{-1}{\left(2\pi\I\right)^2} \oint_{\Gamma_{\sqrt{q}}}\dx w \oint_{\Gamma_{1/\sqrt{q},1/\bfa}} \dx v\frac{Q(x,w;y,v)}{v-w} \prod_{j=1}^\ell \frac{1-a_jw}{1-a_jv} ,\nonumber \\
				K_{m,n,21}^{{\rm geo}}(x,y)&:=\frac{-1}{\left(2\pi\I\right)^2} \oint_{\Gamma_{\sqrt{q}}}\dx w \oint_{\Gamma_{1/\sqrt{q} }}\dx v\frac{Q(x,w;y,v)}{v-w} \prod_{i=1}^\ell\left(1-a_iw\right) \prod_{j=1}^k \left(1-b_j/v\right),\label{E:geoKernel}\\
				K_{m,n,12}^{{\rm geo}}(x,y)&:=\frac{-1}{\left(2\pi\I\right)^2} \oint_{\Gamma_{\sqrt{q},\bfb}}\dx w \oint_{\Gamma_{1/\sqrt{q},1/\bfa}} \dx v\frac{Q(x,w;y,v)}{v-w}\frac{1}{\prod_{i=1}^\ell\left(1-a_iv\right) \prod_{j=1}^k \left(1-b_j/w\right)} \nonumber\\
&\quad+R_{m,n,12}^{{\rm geo}}(x,y). \nonumber
		\end{align}
		with
		\begin{equation}
			R_{m,n,12}^{{\rm geo}}(x,y):=\begin{cases}
				-\frac{1}{2\pi \I}\oint_{\Gamma_{\bfb}}\dx v \frac{ v^{x-1-y}}{\prod_{i=1}^\ell\left(1-a_iv\right) \prod_{j=1}^k \left(1-b_j/v\right)},\quad&{\rm if}\ x>y,\\
				\frac{1}{2\pi \I}\oint_{\Gamma_{1/\bfa}}\dx v \frac{ v^{x-1-y}}{\prod_{i=1}^\ell\left(1-a_iv\right) \prod_{j=1}^k \left(1-b_j/v\right)},\quad&{\rm if}\ x\leq y.\\
			\end{cases}
		\end{equation}	
	\end{prop}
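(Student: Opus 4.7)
The plan is to reduce $\Pb(L^{\mathrm{geo}}_{(m+\ell,n+k)}\leq s)$ to a joint event at two intermediate LPP values and then exploit the determinantal structure of the resulting two-level Schur process. Since the weights on the corner block $[m+1,m+\ell]\times[n+1,n+k]$ vanish identically, any up-right path from $(1,1)$ to $(m+\ell,n+k)$ collects no weight once it enters this block. Partitioning according to where the path first enters the block (via either an up-step from the right boundary strip or a right-step from the top boundary strip) and using monotonicity of LPP under non-negative weights, one obtains
\begin{equation*}
L^{\mathrm{geo}}_{(m+\ell,n+k)} \;=\; \max\!\bigl(L^{\mathrm{geo}}_{(m+\ell,n)},\,L^{\mathrm{geo}}_{(m,n+k)}\bigr),
\end{equation*}
so the CDF in question equals $\Pb(L^{\mathrm{geo}}_{(m+\ell,n)}\leq s,\,L^{\mathrm{geo}}_{(m,n+k)}\leq s)$.

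Next I would realize the joint law of $(L^{\mathrm{geo}}_{(m,n+k)},L^{\mathrm{geo}}_{(m+\ell,n)})$ as the pair of first parts $(\lambda^{(1)}_1,\lambda^{(2)}_1)$ of a two-level Schur process. Via the RSK/Schur correspondence for geometric LPP, the shapes obtained from the $m\times(n+k)$ right-extended subgrid and the $(m+\ell)\times n$ top-extended subgrid share the bulk contribution and can be coupled through a common intermediate partition $\mu$ obtained from the $m\times n$ bulk. This yields an Okounkov--Reshetikhin Schur process on $(\lambda^{(1)},\mu,\lambda^{(2)})$, with $\mu$ distributed as the Schur measure with $m$ row and $n$ column parameters equal to $\sqrt{q}$, and with the skew steps $\lambda^{(1)}/\mu$ and $\lambda^{(2)}/\mu$ carrying the boundary specializations $\sqrt{q}\,\bfb$ and $\sqrt{q}\,\bfa$ respectively. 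The descent configuration $\{\lambda^{(r)}_j-j\colon j\geq 1,\,r\in\{1,2\}\}$ on $\Z\times\{1,2\}$ is then a determinantal point process by the Eynard--Mehta theorem, and its $2\times 2$ matrix correlation kernel admits a double-contour-integral representation via the standard Borodin--Rains formula for Schur processes.

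The gap event $\{\lambda^{(r)}_1\leq s\}_{r=1,2}$ translates to ``no particle exceeds $s-1$ at either level'', which yields the Fredholm determinant
\begin{equation*}
\Pb\bigl(L^{\mathrm{geo}}_{(m+\ell,n+k)}\leq s\bigr) \;=\; \det\!\bigl(\Id - P_s K^{\mathrm{geo}}_{m,n} P_s\bigr)_{\ell^2(\Z)}.
\end{equation*}
Matching the Schur-process kernel against the stated formulas becomes a direct computation: the bulk specialization produces the rational factor $\bigl(\tfrac{1-\sqrt{q}\,w}{1-\sqrt{q}\,v}\bigr)^m\bigl(\tfrac{1-\sqrt{q}/v}{1-\sqrt{q}/w}\bigr)^n$ appearing in $Q$, the boundary specializations produce the products in $\bfa,\bfb$ at the appropriate level, and the diagonal entries $K^{\mathrm{geo}}_{m,n,11},K^{\mathrm{geo}}_{m,n,22}$ reduce to the single-level Schur-measure kernels.

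The main technical obstacle will be the identification of the off-diagonal entry $K^{\mathrm{geo}}_{m,n,12}$ together with its additional single-contour correction $R_{m,n,12}$ split by $\Id_{x>y}$ and $\Id_{x\leq y}$. This extra term is the standard ``single-step transition kernel'' that enters off-diagonal entries of extended determinantal processes. At the level of contour integrals, it is the residue at $v=w$ picked up when the $w$- and $v$-contours must be deformed past each other in order to reach their stated positions. The resulting single-variable integrand has poles at $\{0,1/\bfa\}$ and at $\{\bfb,\infty\}$, and the decay behaviour of the power $v^{x-1-y}$ at $0$ and at $\infty$ dictates which side one may close the contour, producing exactly the case split according to the sign of $x-y$. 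A final routine check verifies that all contours $\Gamma_{\sqrt{q},\bfb}$ and $\Gamma_{1/\sqrt{q},1/\bfa}$ can be chosen mutually disjoint under the assumption $0<\sqrt{q}\,\bfa,\sqrt{q}\,\bfb<1$, so that the Fredholm determinant converges and all contour integrals are well defined.
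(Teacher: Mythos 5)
Your proposal follows essentially the same route as the paper: use the zero weights on the corner block to reduce to the joint law of $(L^{\mathrm{geo}}_{(m,n+k)},L^{\mathrm{geo}}_{(m+\ell,n)})$, identify these with the first parts $(\lambda^{(1)}_1,\lambda^{(2)}_1)$ of an Okounkov--Reshetikhin Schur process weighted by $s_{\lambda^{(1)}}\,s_{\lambda^{(1)}/\mu}\,s_{\lambda^{(2)}/\mu}\,s_{\lambda^{(2)}}$ with the same specializations, invoke the Eynard--Mehta/Borodin--Rains double-contour kernel (the paper cites \cite{OR01,Jo03b,RB04}), and obtain the extra single-contour term $R^{\mathrm{geo}}_{m,n,12}$ as the residue at $v=w$ when the $12$-entry's nested contours are swapped, with the $x>y$ versus $x\leq y$ split governed by the behaviour of $v^{x-1-y}$ at $0$ and $\infty$. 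The paper's proof carries out exactly these steps, so your plan is correct and not a genuinely different argument.
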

	\begin{proof}
		Since $\omega_{i,j}=0$ for any $i> m$ and $j> n$, we have
		\begin{equation}
			L_{(m+\ell,n+k)}^{\mathrm{geo}}=\max\left\{L_{(m+\ell,n)}^{\mathrm{geo}},L_{(m,n+k)}^{\mathrm{geo}}\right\}.
		\end{equation}
		Hence, it suffices to compute the distribution function of the right-hand side. We achieve this by exploiting the connection to the Schur process~\cite{Ok01,OR01}. The relationship between geometric last passage percolation and the Schur process was first observed in~\cite{Jo00b}; see also~\cite{Jo03b} for further developments.
		
The LPP to the points $ (m, n+k)$, $ (m,n)$, and $ (m+\ell, n)$ are the first row of the partitions denoted by $\lambda^{(1)}$, $\mu$, and $\lambda^{(2)}$, whose joint distribution is given by the Schur process
\begin{equation}\label{eq3.10}
\varnothing \subset \lambda^{(1)}\supset \mu \subset \lambda^{(2)}\supset \varnothing
\end{equation}
with weight
\begin{equation}\label{E:schur}
s_{\lambda^{(1)}}(\overbrace{\sqrt{q},\ldots,\sqrt{q}}^{m\textrm{ times}}) s_{\lambda^{(1)}/\mu}(b_1,\ldots,b_k) s_{\lambda^{(2)}/\mu}(a_1,\ldots,a_{\ell}) s_{\lambda^{(2)}}(\overbrace{\sqrt{q},\ldots,\sqrt{q}}^{n\textrm{ times}}).
\end{equation}

		The Schur process $(\lambda^{(1)},\ \lambda^{(2)})$ is determinantal point process satisfying \cite{OR01} (see also~\cite{Jo03b} or Theorem~2.2 in~\cite{RB04} for the explicit expression below)
		\begin{equation}
			\Pb\left(L_{m,n+k}^{{\rm geo}}\leq s_1,L_{m+\ell,n}^{{\rm geo}}\leq s_2\right)=\det\left(\Id-\begin{pmatrix}
				P_{s_1}K_{m,n,11}^{{\rm geo}}P_{s_1} &P_{s_1}K_{m,n,12}^{{\rm geo}}P_{s_2}\\
				P_{s_2}K_{m,n,21}^{{\rm geo}}P_{s_1} &P_{s_2}K_{m,n,22}^{{\rm geo}}P_{s_2}\\
			\end{pmatrix}\right)_{L^2(\Z)}
		\end{equation}
		with
		\begin{equation}
			\begin{aligned}
				&K_{m,n,11}^{{\rm geo}}(x,y):=\frac{1}{\left(2\pi\I\right)^2} \oint_{}\oint_{\mathcal C_1} \dx v\dx w \frac{ Q(x,w;y,v)}{v-w} \prod_{j=1}^k\left(\frac{1-b_j/v}{1-b_j/w}\right),\\
				&K_{m,n,22}^{{\rm geo}}(x,y):=\frac{1}{\left(2\pi\I\right)^2} \oint_{}\oint_{\mathcal C_1} \dx v\dx w  \frac{Q(x,w;y,v)}{v-w} \prod_{j=1}^\ell\left(\frac{1-a_jw}{1-a_jv}\right),\\
				&K_{m,n,21}^{{\rm geo}}(x,y):=\frac{1}{\left(2\pi\I\right)^2} \oint_{}\oint_{\mathcal C_1} \dx v\dx w \frac{Q(x,w;y,v)}{v-w} \prod_{i=1}^\ell\left(1-a_iw\right) \prod_{j=1}^k \left(1-b_j/v\right),\\
				&K_{m,n,12}^{{\rm geo}}(x,y):=\frac{1}{\left(2\pi\I\right)^2}\oint_{}\oint_{\mathcal C_2} \dx v\dx w  \frac{Q(x,w;y,v)}{v-w}\frac{1}{\prod_{i=1}^\ell\left(1-a_iv\right) \prod_{j=1}^k \left(1-b_j/w\right)},
			\end{aligned}
		\end{equation}
        where the contours $\mathcal C_1$ and $\mathcal C_2$ are given in Figure~\ref{F:contourC}.
		
		Applying Cauchy's residue theorem on $K_{m,n,12}^{{\rm geo}}(x,y)$, we can deform the contour $\mathcal C_2$ to $\mathcal C_1$ leading to the extra term from the residue at $v=w$, namely
		\begin{equation}
			\begin{aligned}
				K_{m,n,12}^{{\rm geo}}(x,y)=&\frac{1}{\left(2\pi\I\right)^2}\oint_{}\oint_{\mathcal C_1} \dx v\dx w  \frac{Q(x,w;y,v)}{v-w}\frac{1}{\prod_{i=1}^\ell\left(1-a_iv\right) \prod_{j=1}^k \left(1-b_j/w\right)}\\
				&-\frac{1}{2\pi \I}\oint_{\Gamma_{0,\bfb}}\dx v\frac{1}{v^{y-x+1}}\frac{1}{\prod_{i=1}^\ell\left(1-a_iv\right) \prod_{j=1}^k \left(1-b_j/v\right)}.
			\end{aligned}
		\end{equation}
		The claimed representations are obtained by deforming these contours appropriately. For instance, for $K_{m,n,11}^{{\rm geo}}$, we can deform the contour for $v$ to include $1/\sqrt{q}$ since $v=\infty$ is not a pole.	
		\begin{figure}
			\centering
			\begin{subfigure}
				\centering
				\includegraphics[scale=0.8]
				{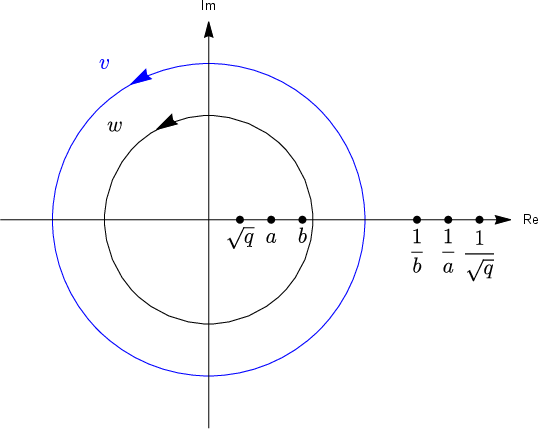}
			\end{subfigure}
			\begin{subfigure}
				\centering
				\includegraphics[scale=0.8]
				{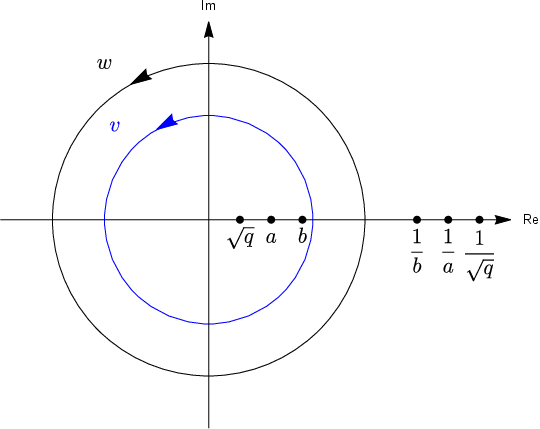}
			\end{subfigure}
			\caption{The contours $\mathcal{C}_1$ (left) and $\mathcal{C}_2$ (right) are depicted. Each contour encloses the points $\bfa$, $\bfb$, and $\sqrt{q}$, while excluding $\tfrac{1}{\bfa}$, $\tfrac{1}{\bfb}$, and $\tfrac{1}{\sqrt{q}}$. In $\mathcal{C}_1$, the $v$-contour encloses the $w$-contour, whereas in $\mathcal{C}_2$, the nesting order is reversed. In particular, under the assumption $0<\bfa,\bfb<1$, we have $\tfrac{1}{\bfa} > 1 > \bfa$ and $\tfrac{1}{\bfb} > 1 > \bfb$. }\label{F:contourC}
		\end{figure}
	\end{proof}

\subsection{Limit to exponential LPP}		
	As previously mentioned, the exponential LPP defined in~\eqref{E:ThickBd} can be viewed as the scaling limit of a certain geometric LPP: for $\varepsilon > 0$, define $q := 1 - \varepsilon$ and
	\begin{equation}\label{E:geoRate}
		\begin{aligned}
			a_i &= 1 - \varepsilon \alpha_i, \quad \text{for all } i \in \{ 1, \ldots, \ell\}, \\
			b_j &= 1 - \varepsilon \beta_j, \quad \text{for all } j \in \{ 1, \ldots, k\},
		\end{aligned}
	\end{equation}
	where $\bma$ and $\bmb$ are given in Theorem~\ref{thm:expKernel}. Substituting this into~\eqref{E:PrelimitGeo} and denote $L_{(i,j)\to(m,n)}^{{\rm geo},\varepsilon}$ as the last passage time from $ (i,j)$ to $ (m,n)$, we have
	\begin{equation}\label{E:geo_to_exp}
		\lim_{\varepsilon\to 0}\varepsilon L_{(i,j)\to(m,n)}^{{\rm geo},\varepsilon}\stackrel{d}{=}\lpp_{(i,j)\to(m,n)}.
	\end{equation}

    As often is the case, the limit of the correlation kernel is well-defined only after considering a proper conjugation. Thus, we introduce the conjugation matrix $C=(C_{i,j})_{1\leq i,j\leq 2}$ with entries
    \begin{equation}
		C_{i,j} := \begin{cases}
			1, & \text{if } (i,j)=(1,1)\textrm{ or }(i,j)=(2,2), \\
			(-1)^{\ell} \varepsilon^{k + \ell }, & \text{if } (i, j) = (1,2), \\
			(-1)^{\ell} \varepsilon^{-(k + \ell)}, & \text{if } (i, j) = (2,1).
		\end{cases}
	\end{equation}
	The limiting kernel is given by
	\begin{equation} \label{E:kernel_geo_to_exp}
		\kexp_{m,n,ij}(x, y) := \lim_{\varepsilon \to 0}\frac1\varepsilon C_{i,j}  \kgeo_{m,n,ij}(x/\varepsilon, y/\varepsilon).
	\end{equation}
An elementary computation gives the following limits.
	\begin{lem}\label{lem:limit}
		For any fixed $x,y\in\R$ and $w,v\in\C$, we have
		\begin{equation}
			\begin{aligned}
				&\lim_{\varepsilon\to0}\left(1-(1-\varepsilon \beta)(1+\varepsilon x)\right)/\varepsilon=\beta-x,\\
				&\lim_{\varepsilon\to0}\left(1-(1-\varepsilon \beta)/(1+\varepsilon x)\right)/\varepsilon=\beta+x,\\
				&\lim_{\varepsilon\to0} Q(x/\varepsilon,1+\varepsilon w;y/\varepsilon,1+\varepsilon v)=E(x,w;y,v),
			\end{aligned}
		\end{equation}
where $E$ is defined in~\eqref{E:P}.
	\end{lem}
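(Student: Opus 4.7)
The lemma reduces to three elementary Taylor expansions in $\varepsilon$, and my plan is to verify them in order.

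For the first identity, I would expand $(1-\varepsilon\beta)(1+\varepsilon x)=1+\varepsilon(x-\beta)+O(\varepsilon^2)$; subtracting from one and dividing by $\varepsilon$ leaves $\beta-x+O(\varepsilon)$, giving the claimed limit. The second identity is analogous: using $(1+\varepsilon x)^{-1}=1-\varepsilon x+O(\varepsilon^2)$, one obtains $(1-\varepsilon\beta)/(1+\varepsilon x)=1-\varepsilon(\beta+x)+O(\varepsilon^2)$, so the limit is $\beta+x$.

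The third identity is the substantive one, and I would handle the three factors of $Q$ in turn. Using $\sqrt q=\sqrt{1-\varepsilon}=1-\varepsilon/2+O(\varepsilon^2)$, direct expansion yields
\[
1-\sqrt q(1+\varepsilon w)=\varepsilon\bigl(\tfrac12-w\bigr)+O(\varepsilon^2),\qquad 1-\frac{\sqrt q}{1+\varepsilon v}=\varepsilon\bigl(\tfrac12+v\bigr)+O(\varepsilon^2),
\]
and analogous expressions with $w,v$ swapped. The leading $\varepsilon$ prefactors cancel in each ratio, and because the integer exponents $m$ and $n$ are fixed as $\varepsilon\to 0$, the first two factors of $Q$ converge to $\bigl(\tfrac{1/2-w}{1/2-v}\bigr)^m$ and $\bigl(\tfrac{1/2+v}{1/2+w}\bigr)^n$ respectively. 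For the remaining factor $(1+\varepsilon w)^{x/\varepsilon-1}/(1+\varepsilon v)^{y/\varepsilon}$, I would write $(1+\varepsilon w)^{x/\varepsilon}=\exp\bigl(\tfrac{x}{\varepsilon}\log(1+\varepsilon w)\bigr)$, use the expansion $\log(1+\varepsilon w)=\varepsilon w+O(\varepsilon^2)$ so that the exponent converges to $xw$, and absorb the harmless factor $(1+\varepsilon w)^{-1}\to 1$; the same argument for $v$ gives $(1+\varepsilon v)^{y/\varepsilon}\to e^{vy}$. Multiplying the three limits produces exactly $E(x,w;y,v)$ as defined in~\eqref{E:P}.

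There is no genuine obstacle: every quantity involved is a fixed analytic function of finitely many complex parameters, and the convergence is pointwise with $m,n,\bma,\bmb,x,y,w,v$ held fixed. The only care needed is to track the leading $\varepsilon$-orders of numerator and denominator in each ratio in $Q$ so that the resulting finite limit is correctly identified; in particular, to make the $m,n$-fold powers tractable it suffices to take the scalar limit first and then raise to the (fixed) integer power. This, together with the overall $\varepsilon$-bookkeeping prescribed by the definition of the limiting kernel in~\eqref{E:kernel_geo_to_exp}, will in the next subsection feed directly into the derivation of $K^{\mathrm{exp}}_{m,n}$ from $K^{\mathrm{geo}}_{m,n}$ via the change of variables $w\mapsto 1+\varepsilon w$, $v\mapsto 1+\varepsilon v$ in each contour integral.
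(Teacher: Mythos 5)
The paper offers no proof for this lemma beyond the remark that it is ``an elementary computation,'' so there is no written argument to compare against; your Taylor expansions are exactly what the authors have in mind and they are correct as calculations. However, for the third identity you should have flagged an inconsistency with the printed statement. You correctly establish that each of the three factors of $Q(x/\varepsilon, 1+\varepsilon w; y/\varepsilon, 1+\varepsilon v)$ has a finite nonzero limit, hence $Q(x/\varepsilon, 1+\varepsilon w; y/\varepsilon, 1+\varepsilon v)\to E(x,w;y,v)$; but the lemma as printed claims $\lim_{\varepsilon\to0}\varepsilon\,Q(x/\varepsilon,1+\varepsilon w;y/\varepsilon,1+\varepsilon v)=E(x,w;y,v)$, and the extra factor of $\varepsilon$ would force that left-hand side to $0$, not $E$. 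That $\varepsilon$ is a typo: in the derivation of $\kexp_{m,n}$ from $\kgeo_{m,n}$, the prefactor $\varepsilon^{-1}$ in~\eqref{E:kernel_geo_to_exp}, the Jacobian $\varepsilon^2$ from the substitution $w\mapsto 1+\varepsilon w$, $v\mapsto 1+\varepsilon v$, and the factor $\varepsilon^{-1}$ from the denominator $v-w$ multiply to $1$, so $Q$ alone must supply the finite limit $E$ --- which is precisely what your computation delivers. Your argument is sound, but as written it silently proves the corrected statement while appearing to endorse the printed one; you should note the spurious $\varepsilon$ explicitly rather than leaving the reader to reconcile the mismatch.
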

	As previously discussed, the distribution of the last passage time from $(-\ell+1,-k+1)$ to $ (m,n)$ under the setting of~\eqref{E:ThickBd} coincides with that from $ (1,1)$ to $(m+\ell,n+k)$ under the setting of~\eqref{E:Exp_Lpp2}. Taking the limit as described in~\eqref{E:kernel_geo_to_exp} we obtain the following result.
	\begin{thm}\label{thm:hatExp}
		Under the Assumption~\ref{asmp: glpp}, and that $\bma, \bmb>0$,
		\begin{equation}
		\label{E:2}
			\Pb\left(\lpp_{m,n}\leq s\right)=\det\left(\Id-P_{s}K_{m,n}^{\exp}P_{s} \right)_{L^2(\R)},
		\end{equation}
		where $P_s(x)=\Id_{x>s}$ and $K_{m,n}^{\exp}$ is given in Theorem~\ref{thm:expKernel}.
	\end{thm}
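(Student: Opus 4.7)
The plan is to establish Theorem~\ref{thm:hatExp} as the scaling limit of Proposition~\ref{P:geo}, using the parametrization $q=1-\varepsilon$, $a_i = 1-\varepsilon\alpha_i$, $b_j = 1-\varepsilon\beta_j$. The positivity hypothesis $\bma,\bmb>0$ ensures $0<\sqrt{q}\bfa,\sqrt{q}\bfb<1$ for all small $\varepsilon$, so that the geometric assumption~\eqref{asmp: glpp} is valid. Combining Proposition~\ref{P:geo} at level $s/\varepsilon$ with the convergence in distribution~\eqref{E:geo_to_exp} yields
\[
\Pb\!\left(L^{\bma,\bmb}_{m,n}\leq s\right) = \lim_{\varepsilon\to 0} \det\!\left(\Id - P_{s/\varepsilon} K^{\rm geo}_{m,n} P_{s/\varepsilon}\right)_{\ell^2(\Z)},
\]
and the task is to recognize the right-hand side as $\det(\Id - P_s K^{\exp}_{m,n} P_s)_{L^2(\R)}$.

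First I would rewrite both Fredholm series explicitly. On the geometric side, the variable change $x\mapsto x/\varepsilon$, $y\mapsto y/\varepsilon$ turns each discrete sum $\sum_{x>s/\varepsilon}$ into a Riemann sum with mesh $\varepsilon$ that approximates $\int_s^\infty\dx x$; the factor $1/\varepsilon$ in \eqref{E:kernel_geo_to_exp} is precisely what this change requires. Since the Fredholm determinant is invariant under similarity transforms, I then conjugate by the diagonal matrix $C$ of \eqref{E:kernel_geo_to_exp}, which produces finite, nontrivial pointwise limits for all four entries. In every contour integral of $K^{\rm geo}_{m,n,ij}$ I substitute $w=1+\varepsilon\tilde w$, $v=1+\varepsilon\tilde v$; under this change the contours $\Gamma_{\sqrt{q},\bfb}$ and $\Gamma_{1/\sqrt q,1/\bfa}$ in $w,v$ become, as $\varepsilon\to 0$, small contours in $\tilde w,\tilde v$ around $\{-1/2,-\bmb\}$ and $\{1/2,\bma\}$ respectively. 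Lemma~\ref{lem:limit} identifies the integrand limits, including the scalings $1-a_iv = -\varepsilon(\tilde v-\alpha_i)+O(\varepsilon^2)$ and $1-b_j/w = \varepsilon(\tilde w+\beta_j)+O(\varepsilon^2)$, and these $\varepsilon^{\pm(k+\ell)}$ powers are exactly cancelled by the entries of $C$. This immediately produces $K^{\exp}_{m,n,11}$, $K^{\exp}_{m,n,22}$, $K^{\exp}_{m,n,21}$ of Theorem~\ref{thm:expKernel}.

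The delicate case is $K^{\exp}_{m,n,12}$, where the formula splits according to $\{x>y\}$ and $\{x\leq y\}$. The double contour integral in $K^{\rm geo}_{m,n,12}$ converges (after rescaling) to the double integral over $\Gamma_{-1/2,-\bmb}\times\Gamma_{1/2,\bma}$ with \emph{disjoint} contours, whereas the target $K^{\exp}_{m,n,12}$ uses nested contours (the $v$-contour contains $w$, or the $w$-contour contains $v$). The correction is supplied by $R^{\rm geo}_{m,n,12}$. By Cauchy's theorem applied in the exponential variables, pulling the $v$-contour past $w$ adds the residue
\[
-\frac{1}{2\pi\I}\oint_{\Gamma_{-1/2,-\bmb}}\dx w\,\frac{e^{w(x-y)}}{\prod_{j=1}^{k}(w+\beta_j)\prod_{i=1}^{\ell}(w-\alpha_i)},
\]
and symmetrically for the other case. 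On the geometric side, I would scale the $\Gamma_{\bfb}$-integral in $R^{\rm geo}_{m,n,12}$ for $x>y$ by $v=1+\varepsilon\tilde v$; the integrand has no pole at $\tilde v=-1/2$, so the resulting $\Gamma_{-\bmb}$ contour can be inflated to $\Gamma_{-1/2,-\bmb}$, matching the displayed residue exactly. For $x\leq y$, I would split $\Gamma_{0,1/\bfa}=\Gamma_{1/\bfa}\cup\Gamma_0$: the $\Gamma_{1/\bfa}$ piece rescales to the analogous residue in $\tilde v$ around $\bma$, while the $\Gamma_0$ piece (which collapses to $\tilde w=-\infty$) is shown to give no contribution because the integrand decays sufficiently fast at infinity in $\tilde w$, as follows from the factor $e^{(x-y)\tilde w}$ with $x\leq y$.

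The remaining and, in my view, main obstacle is justifying the exchange of limit and Fredholm expansion. This requires uniform estimates of the form $|\frac{1}{\varepsilon}C_{ij}K^{\rm geo}_{m,n,ij}(x/\varepsilon,y/\varepsilon)|\leq C e^{-c(x+y)}$ for $x,y>s$ and all sufficiently small $\varepsilon$. The standard route, following the strategy in~\cite{BFP09}, is to deform the $w$- and $v$-contours in the finite-$\varepsilon$ kernel to steep-descent curves for the function $Q$ defined in~\eqref{E:Q}; along such contours $|Q(x/\varepsilon,w;y/\varepsilon,v)|$ admits explicit exponential decay in $x$ and $y$, uniformly in $\varepsilon$, and this transfers to the conjugated kernel entries after accounting for the $C_{ij}$ factors. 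Once these bounds are available, Hadamard's inequality dominates the Fredholm series by an absolutely convergent series, and dominated convergence together with the pointwise identifications above completes the proof.
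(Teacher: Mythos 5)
Your overall strategy is the paper's: pass from Proposition~\ref{P:geo} to the exponential limit via the parametrization in \eqref{E:geoRate}, change variables $w\mapsto1+\varepsilon w$, conjugate by $C$, read off integrand limits from Lemma~\ref{lem:limit}, and for the $(1,2)$ entry first obtain the disjoint-contour form plus a residue correction, then deform to the nested contours; the a-priori bounds via steep descent are also what the paper alludes to (Appendix~C.3 of \cite{BFO19}).

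However, the argument for the $\Gamma_0$ piece in $R^{\rm geo}_{m,n,12}$ for $x\le y$ is incorrect, and if carried out it would produce the wrong limit for $R^{\exp}_{m,n}$. The rescaled circle around $w=0$ sits near $\tilde w=-1/\varepsilon\to-\infty$; with $x\le y$ the exponent $(x-y)\tilde w$ there is nonnegative, so $e^{(x-y)\tilde w}$ \emph{grows} rather than decays, and in any case the approximation $w^{(x-y)/\varepsilon-1}\approx e^{(x-y)\tilde w}$ from Lemma~\ref{lem:limit} requires $\varepsilon\tilde w$ bounded and is invalid near $w=0$. The residue at $w=0$ does not in fact vanish in the scaling limit: for $k=\ell=1$ one can check that keeping the pole at $0$ inside the contour yields $e^{\beta_1(y-x)}/(\alpha_1+\beta_1)$ instead of the correct $e^{\alpha_1(x-y)}/(\alpha_1+\beta_1)$. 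The right argument is that for $x\le y$ the residue-correction integrand decays as $w^{-1-\ell}$ at infinity, so there is no residue at $\infty$ and $-\tfrac{1}{2\pi\I}\oint_{\Gamma_{0,\bfb}}=+\tfrac{1}{2\pi\I}\oint_{\Gamma_{1/\bfa}}$; in other words the $x\le y$ contour in $R^{\rm geo}_{m,n,12}$ should be $\Gamma_{1/\bfa}$, not $\Gamma_{0,1/\bfa}$ (the pole at $0$ is already accounted for by the global residue sum, and should not be re-enclosed). With $\Gamma_{1/\bfa}$, the rescaling $w=1+\varepsilon\tilde w$ sends $1/a_i$ to $\alpha_i+O(\varepsilon)$ and directly reproduces $R^{\exp}_{m,n}$, with no extra $\Gamma_0$ piece to dispose of.
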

	\begin{proof}
		Applying Lemma~\ref{lem:limit},~\eqref{E:kernel_geo_to_exp},~\eqref{E:geoRate} and change of variables $w\mapsto 1+\varepsilon w$, $v\mapsto 1+\varepsilon v$ on~\eqref{E:geoKernel}, we obtain that the kernel converges to $K_{m,n}^{\exp}$.

The only entry which is not straightforward of the claimed form is $K_{m,n,12}^{\exp}$. For this we first get
\begin{equation}
\label{E:1}
\begin{aligned}
K_{m,n,12}^{\exp}(x,y)&:=\frac{-1}{\left(2\pi\I\right)^2}\oint_{\Gamma_{-1/2,-\bmb}}\dx w\oint_{\Gamma_{1/2,\bma}}\dx v\frac{E(x,w;y,v)}{v-w}\frac{1}{\prod_{i=1}^{k}\left(w+\beta_i\right)\prod_{j=1}^{\ell}\left(v-\alpha_j\right)}\\
&\quad+R_{m,n}^{\exp}(x,y)
			\end{aligned}
		\end{equation}
		with
		\begin{equation}
			R_{m,n}^{\exp}(x,y):=\begin{cases}
				\frac{-1}{2\pi\I}\oint_{\Gamma_{-\bmb}}\dx w \frac{e^{w(x-y)}}{\prod_{i=1}^{k}\left(w+\beta_i\right)\prod_{j=1}^{\ell}\left(w-\alpha_i\right)},\quad&{\rm if}\ x>y,\\
				\frac{1}{2\pi\I}\oint_{\Gamma_{\bma}}\dx w \frac{e^{w(x-y)}}{\prod_{i=1}^{k}\left(w+\beta_i\right)\prod_{j=1}^{\ell}\left(w-\alpha_j\right)},\quad&{\rm if}\ x\leq y.
			\end{cases}
		\end{equation}
By deforming the contours in \eqref{E:1} so that either $w$ includes $v$ (for $x\leq y$) or $v$ includes $w$ (for $x>y$) the term $R_{m,n}^{\exp}$ is cancelled. This new representation is useful when considering later non-positive $\bmb,\bma$.

For the convergence of the Fredholm determinant, one just needs to add some a-priori bound on the kernel entries implying that we can take the $\varepsilon\to 0$ limit into the Fredholm series expansion. We refrain to give the details as similar estimates can be found for instance in Appendix~C.3 of~\cite{BFO19}.
	\end{proof}	
 	\begin{rem}
 		The restriction $\bma, \bmb > 0$ is essential in the proof of Theorem~\ref{thm:hatExp} because the application of Proposition~\ref{P:geo} requires that the parameters $\bfa$ and $\bfb$ defined in~\eqref{E:geoRate} satisfy $0 < \bfa, \bfb < 1$. Also, the expression for $K^{\exp}_{m,n,12}$ without the $R^{\exp}_{m,n}$ term is needed for the analytic continuation in the next section to the negative values of $\bfa,\bfb$.
 	\end{rem}

\subsection{Analytic continuation}

 	Our next goal is to prove Theorem~\ref{thm:expKernel} by extending the result of Theorem~\ref{thm:hatExp} to the setting where $\min\{{\bma}, {\bmb}\} \in (-1/2,0]$.
		
	By Proposition 5.1 of~\cite{FS05a}, the distribution function on the left-hand side of \eqref{E:2} is analytic in the parameters $\bma>-1/2$ and $\bmb>-1/2$. This is not surprising since for this range of parameters, the LPP model is well-defined. Therefore, extending the admissible range of $\bma$ and $ \bmb$  reduces to establishing the corresponding analytic continuation for the right-hand side of \eqref{E:2}.

	\begin{proof}[Proof of Theorem~\ref{thm:expKernel}]
	We need to show that for $i, j \in \{1, 2\}$ and $ \xi, \zeta \in \R $, the function $ K_{m,n,ij}^{\exp}(\xi,\zeta) $ defined in Theorem~\ref{thm:expKernel} is analytic with respect to $ \bma, \bmb>-\tfrac12$.\medskip
	
	\textbf{For $\kexp_{m,n,11}$ and $\kexp_{m,n,22}$.}	Since the analytic continuations of $\kexp_{m,n,11}$ and $\kexp_{m,n,22}$ proceed via essentially the same argument, we restrict our attention to the case of $\kexp_{m,n,11}$. We deform the contour $\Gamma_{-1/2, -\bmb}$ for the variable $w$ to $B_\delta$, where
	\begin{equation}
		\delta:=\max\{1,2|\beta_i|\mid i\in\llbracket k\rrbracket\}.
	\end{equation}
	In doing so, we cross a simple pole at $w = v$, whose residue is given by
	\begin{equation} \label{E:zeroResidue}
		\frac{1}{2\pi \I} \oint_{\Gamma_{1/2}} \dx v\, e^{xv - yv} = 0.
	\end{equation}
	Thus, by Cauchy's residue theorem, we obtain
	\begin{equation}\label{E:k11_ball}
		K_{m,n,11}^{\exp}(x,y)= \frac{-1}{(2\pi \I)^2} \oint_{B_\delta} \dx w \oint_{\Gamma_{1/2}} \dx v \, \frac{E(x,w; y,v)}{v-w} \prod_{i=1}^{\ell} \frac{v + \beta_j}{w + \beta_j}.
	\end{equation}
	Note that the right-hand side is manifestly analytic for any $\bmb \in (-1/2,0]$, since all poles lie strictly inside the contour $B_\delta$ (see also Lemma~B.2 of~\cite{BCFV14} for a general analysis statement of analyticity of an integral over a contour in the complex plane).\medskip

	\textbf{For $K_{m,n,21}^{\exp}$.} This case is straightforward as no contour deformation is required.\medskip

	\textbf{For $\kexp_{m,n,12}$.} Since the analytic continuations for $x > y$ and $x \leq y$ follow essentially the same argument, we focus on the case $x>y$. We deform the contour $\Gamma_{1/2, \bma}$ for the variable $v$ to $B_{3\delta_a}$, where
	\begin{equation}
		\delta_a:=\max\{1,|\alpha_i|,|\beta_j|\mid i\in\llbracket \ell\rrbracket,j\in\llbracket k\rrbracket\}.
	\end{equation}
	Next, we deform the contour $\Gamma_{-1/2, -\bmb}$ for the variable $w$ to $B_{2\delta_a}$. Both deformation do not introduce any additional poles. As a result, we obtain
	\begin{equation}\label{E:k12_analytic}
		K_{m,n,12}^{\exp}(x, y) = \frac{-1}{(2\pi \I)^2} \oint_{B_{2\delta_a}} \dx w\oint_{B_{3\delta_a}} \dx v  \, \frac{E(x, w; y, v)}{ \prod_{j=1}^{k} (w + \beta_j) \prod_{i=1}^{\ell} (v - \alpha_i)}\frac{1}{v-w}.
	\end{equation}
	The right-hand side is analytic in the parameters $\bma$ and $\bmb$, even when $\bma, \bmb \in (-1/2, 0]$. 	
\end{proof}
\begin{proof}[Proof of Proposition~\ref{p:sta}]
	Since the proof closely mirrors that of Theorem~\ref{thm:expKernel}, we only highlight the key differences here. Let $\wh L^{{\rm geo}}_{m,n}$ denote the geometric last passage time from $(1,1)$ to $(m,n)$ corresponding to the model~\eqref{E:StThickBd}. In contrast to~\eqref{E:schur}, the Schur process corresponding to this geometric LPP has the following weight:
	\begin{equation}
		s_{\lambda^{(1)}}(\overbrace{\sqrt{q},\ldots,\sqrt{q}}^{m\textrm{ times}},a_2,\ldots,a_{\ell})\, s_{\lambda^{(1)}/\mu}(b_1)\, s_{\lambda^{(2)}/\mu}(a_1)\, s_{\lambda^{(2)}}(\overbrace{\sqrt{q},\ldots,\sqrt{q}}^{n\textrm{ times}},b_2,\ldots,b_k).
	\end{equation}
	Accordingly, we define, in place of~\eqref{E:Q},
	\begin{equation}
		\wh Q(x,w;\, y,v) := \left(\frac{1 - \sqrt{q}\, w}{1 - \sqrt{q}\, v} \right)^m\left(\frac{1 - \sqrt{q}/v}{1 - \sqrt{q}/w} \right)^n\prod_{i=2}^\ell \left(\frac{1 - a_i\, w}{1 - a_i\, v} \right)\prod_{j=2}^k \left(\frac{1 - b_j/v}{1 - b_j/w} \right)\frac{w^{x-1}}{v^y}.
	\end{equation}
	Then the kernel for $\wh L^{{\rm geo}}_{m,n}$ is obtained from $K^{{\rm geo}}_{m,n}$ by replacing $Q(x,w;\, y,v)$ with $\wh Q(x,w;\, y,v)$ and taking $k=\ell=1$.
	The remainder of the proof proceeds exactly as in Theorem~\ref{thm:expKernel}.
\end{proof}

\section{Large-$N$ asymptotics}\label{sectAsymptotics}
In this section, we prove Theorem~\ref{thm:AC_conj}. Recall that we consider the end-point to be
\begin{equation}
(m,n)=(N - \tau(2N)^{2/3},\, N + \tau(2N)^{2/3})
\end{equation}
 and we also scale
		\begin{equation}
				\bma = \tilde\bma + (16N)^{-1/3} \bfx, \quad
				\bmb = \tilde\bmb + (16N)^{-1/3} \bfy,
		\end{equation}
where $\tilde\bma,\tilde\bmb\geq 0$.

	To obtain the limiting distribution
	\begin{equation}
		\lim_{N \to \infty} \Pb\left(L^{\tilde\bma, \tilde\bmb, \bfx, \bfy}_{N,\tau} \leq 4N + (16N)^{1/3} s \right)
	\end{equation}
   we need to show that the rescaled (and conjugated) kernel converges to the claimed limit kernel uniformly on compact sets, and then have a-priori bounds to extend the convergence to the level of Fredholm determinants.

	The rescaled kernel is given by
	\begin{equation}
		\kexpresc_{N,ij}(\xi,\zeta) :=  (16N)^{1/3} \kexp_{N - \tau(2N)^{2/3},\, N + \tau(2N)^{2/3},\, ij}(4N + (16N)^{1/3} \xi,4N + (16N)^{1/3} \zeta),
	\end{equation}
	Since the correlation kernel in Theorem~\ref{thm:expKernel} does not depend on the order of the parameters $\alpha_i,\beta_j$, neither does the limiting distribution. Therefore, without loss of generality, we assume that $\tilde\alpha_i = 0$ for all $i \in \llbracket \ell_0 \rrbracket$ for some $1 \leq \ell_0 \leq \ell$, and $\tilde\beta_j = 0$ for all $j \in \llbracket k_0 \rrbracket$ for some $1 \leq k_0 \leq k$. This means that $\alpha_i>0$ for all $i>\ell_0$, and $\beta_j>0$ for all $j>k_0$.
	
	\begin{lem}\label{lem:pt_cvg}
		For $i, j \in \{1, 2\}$ and uniformly for $\xi, \zeta$ in any compact set, we have
		\begin{equation}
			\lim_{N \to \infty} G_{N,ij} \, \kexpresc_{N,ij}(\xi, \zeta)
			= K_{ij}^{\bfx_{\llbracket \ell_0\rrbracket}+\tau,\bfy_{\llbracket k_0\rrbracket}-\tau}(\xi+\tau^2, \zeta+\tau^2),
		\end{equation}
		where the right-hand side is given in Definition~\ref{def:gBR} and $G_{N, ij}$ is a conjugation factor defined below.
	\end{lem}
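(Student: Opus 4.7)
The proof is a steepest-descent analysis of the double contour integrals defining $\kexp_{m,n,ij}$ in Theorem~\ref{thm:expKernel}. Writing the exponential kernel factor as $E(x,w;y,v) = e^{f(w;x)-f(v;y)}$ with
\begin{equation*}
f(w;x) := m\log(1/2-w) - n\log(1/2+w) + wx,
\end{equation*}
one verifies directly that at the critical values $(m,n,x) = (N,N,4N)$ one has $f'(0) = f''(0) = 0$ and $f'''(0) = -32N \neq 0$, so $f$ has a cubic critical point at $w = 0$. This cubic degeneracy is the source of the $(16N)^{1/3}$ fluctuation scale, while the transverse displacement $\tau(2N)^{2/3}$ of the endpoint $(m,n)$ shifts the saddle by $\tau$ to leading order and introduces a nonzero quadratic coefficient in the local expansion.

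The plan is to rescale $w = (W+\tau)/(16N)^{1/3}$ and $v = (V+\tau)/(16N)^{1/3}$, with the shift $\tau$ chosen precisely to recentre on the displaced saddle and eliminate the quadratic term. A direct Taylor expansion then gives, uniformly on compact subsets of $W, V$,
\begin{equation*}
f\!\left(\tfrac{W+\tau}{(16N)^{1/3}};\, 4N+(16N)^{1/3}\xi\right) = -\tfrac{W^3}{3} + (\xi + \tau^2)\,W + c_N + O(N^{-1/3}),
\end{equation*}
which matches the phase of the limit kernel at the shifted argument $\xi+\tau^2$. Under the same rescaling, the poles coming from $-\beta_j$ (at the indices with $\tilde\beta_j = 0$) and from $\alpha_i$ (with $\tilde\alpha_i = 0$) migrate, in the $W,V$ coordinates, to the locations that are identified with the shifted parameters $\bfx_{\llbracket\ell_0\rrbracket}+\tau$ and $\bfy_{\llbracket k_0\rrbracket}-\tau$ of the limit kernel in Definition~\ref{def:gBR}. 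The rational factors $\prod(w-\alpha_i)$ and $\prod(v+\beta_j)$ at these indices each contribute a $(16N)^{-1/3}$ at the saddle, yielding a $(16N)^{\mp(\ell_0+k_0)/3}$ prefactor in $\kexpresc_{N,21}$ and $\kexpresc_{N,12}$ that is precisely absorbed by the conjugation factor $G_{N,ij}$ (with $G_{N,11} = G_{N,22} = 1$ up to inessential constants). Such conjugations cancel pairwise in the Fredholm determinant and therefore do not affect the final result.

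I would then deform the contours to the steepest-descent paths associated with the cubic saddle: a $\leftcontour{}{}$-type contour at angles $\pm 2\pi/3$ for $w$ and a $\rightcontour{}{}$-type contour at angles $\pm\pi/3$ for $v$, in accord with the contour prescriptions of Definition~\ref{def:gBR}. Standard cubic-decay estimates along these paths, in the spirit of the analysis in~\cite{BFP09}, show that the tails outside a large compact neighborhood of the saddle contribute $o(1)$ uniformly in $(\xi,\zeta)$ on compact sets. On the retained compact region, the uniform convergence of the rescaled integrand to the limit integrand, with the $(16N)^{1/3}$ prefactor of $\kexpresc_{N,ij}$ being exactly absorbed by the $(16N)^{-1/3}$ Jacobian of the $w$- and $v$-substitutions, yields the claimed uniform-on-compacts convergence.

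The main obstacle lies in the $K_{12}$ entry. In Theorem~\ref{thm:expKernel} this entry is split into two cases, $x > y$ and $x \le y$, with the $w$ and $v$ contours nested in opposite orders to ensure convergence of the factor $e^{wx-vy}$; after rescaling this becomes the $\Id_{\xi>\zeta}$ versus $\Id_{\xi\le\zeta}$ dichotomy in~\eqref{E:GBR}. The double-contour piece on $\leftcontour{}{}\times\rightcontour{}{}$ arises directly as the steepest-descent limit, while the additional closed-contour terms $\oint_{\Gamma_{-\bfy}}$ and $\oint_{\Gamma_{\bfx}}$ come from residues picked up as the inner contour is deformed across the shifted poles on the way to the steep-descent configuration. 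Organizing these deformations so that every intermediate contour remains admissible (i.e.\ crosses no additional poles) is the most delicate bookkeeping in the proof; the other three entries $K_{11}, K_{22}, K_{21}$ admit the cleaner, direct argument outlined above.
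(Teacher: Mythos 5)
Your strategy is essentially the one the paper uses: identify the cubic saddle at the origin, rescale $(w,v)\mapsto(16N)^{-1/3}(w,v)$ combined with a $\tau$-shift to absorb the transverse displacement of the endpoint, deform to the Airy contours, and track the residues at the near-zero $\alpha_i$, $-\beta_j$ poles as the inner contour is moved out --- these give the $\oint_{\Gamma_{\bfx}}$, $\oint_{\Gamma_{-\bfy}}$ pieces of $K_{12}$ in Definition~\ref{def:gBR}. Your Taylor expansion and the role of the conjugation $(16N)^{\mp(\ell_0+k_0)/3}$ (times the product of the nonzero $\tilde\alpha_i$, $\tilde\beta_j$) in the off-diagonal entries also match the paper's $\Phi_{N,\tilde\bma,\tilde\bmb}$. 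One small imprecision: $G_{N,11}=G_{N,22}=e^{-\tau(\xi-\zeta)}$ is a $(\xi,\zeta)$-dependent conjugation, not a constant, and it is needed for the pointwise identity asserted in the Lemma (even though it cancels later in the Fredholm determinant).

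The real gap is in $K_{12}$. Passing from the nested contour configuration of Theorem~\ref{thm:expKernel} (one of the $w$-, $v$-contours encircling the other, depending on the sign of $x-y$) to disjoint steepest-descent contours $\gamma$, $\Gamma$ \emph{necessarily} crosses the pole at $v=w$; this is a topological fact and cannot be avoided by ``admissible bookkeeping.'' Its residue is a one-dimensional contour integral of $\frac{e^{v(x-y)}}{\prod_i(v+\beta_i)\prod_j(v-\alpha_j)}$ which, before conjugation, is of order $(16N)^{1/3}$ uniformly (this is the term $K^2_{N,12}$ in \eqref{E:decomp}). The paper shows that after multiplying by the conjugation factor $\Phi_{N,\tilde\bma,\tilde\bmb}\sim(16N)^{-(\ell_0+k_0)/3}$ this contribution is $\Or\bigl(N^{(1-\ell_0-k_0)/3}\bigr)$, which vanishes precisely because $\min\{\ell_0,k_0\}\ge 1$. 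Your plan omits this residue entirely; note that it is the only point in the pointwise-convergence argument where the critical-regime hypothesis $\min\{|\mathcal{I}(\tilde\bma)|,|\mathcal{I}(\tilde\bmb)|\}>0$ is actually used, so it cannot be left implicit.
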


We introduce some notations and contours for the proof of Lemma~\ref{lem:pt_cvg}. Define
	\begin{equation}
		\begin{aligned}
			&\bfr := \max\left\{ |x_i|,\ |y_j| \mid i \in\llbracket\ell_0\rrbracket,\ j \in \llbracket k_0\rrbracket \right\},\\
			&\bfR:=\max\left\{1, |\alpha_i|,\ |\beta_i|,\mid i \in\llbracket\ell\rrbracket,\ j \in \llbracket k\rrbracket \right\}.
		\end{aligned}
	\end{equation}
 We define a conjugation function as
	\begin{equation}\label{E:conjugation_for_limit}
	\begin{aligned}
		G_{N,ij} &:=
		\begin{cases}
			e^{-\tau(\xi-\zeta)}, & \text{if } (i,j)=(1,1)\textrm{ or }(i,j)=(2,2), \\
			e^{-\tau(\xi-\zeta)}\Phi_{N,\tilde\bma,\tilde\bmb}^{-1}	, & \text{if } (i,j) = (2,1), \\
			e^{-\tau(\xi-\zeta)}\Phi_{N,\tilde\bma,\tilde\bmb}	, & \text{if } (i,j) = (1,2).
		\end{cases} \\
		&= e^{-\tau (\xi-\zeta)} \Phi_{N,\tilde\bma,\tilde\bmb}^{j-i}
	\end{aligned}	
	\end{equation}
	with
	\begin{equation}
		\Phi_{N,\tilde\bma,\tilde\bmb}:=(16 N)^{-\frac{\ell_0+k_0}{3}} \prod_{i=\ell_0+1}^\ell (- \tilde\alpha_i) \prod_{j=k_0+1}^k \tilde\beta_j.
	\end{equation}
 We also define the contour (see Figure~\ref{F:deform_c12}),
	\begin{equation}
		\gamma := \mathcal{D}_1 \cup \mathcal{D}_2 \cup \bar{\mathcal{D}}_2 \cup \mathcal{D}_3
	\end{equation}
	where
	\begin{equation}\label{E:contourD}
		\begin{aligned}
			\mathcal{D}_1 &:= \left\{ 2 \bfr (16N)^{-1/3} e^{i\theta} \mid \theta \in [-2\pi/3, 2\pi/3] \right\}, \\
			\mathcal{D}_2 &:= \left\{ 2 \bfr (16N)^{-1/3} e^{2\pi\I/3}(1 - t) +2\bfR e^{2\pi\I/3} t \mid t \in [0,1] \right\}, \\
			\mathcal{D}_3 &:= \left\{ -\bfR + \sqrt{3}\bfR e^{i\theta} \mid \theta \in [\pi/2, 3\pi/2] \right\}.
		\end{aligned}
	\end{equation}
Similarly, we define the contour (see Figure~\ref{F:deform_c12}),
	\begin{equation}
		\Gamma := \mathcal{C}_1 \cup \mathcal{C}_2 \cup \bar{\mathcal{C}}_2 \cup \mathcal{C}_3
	\end{equation}
	where
	\begin{equation}\label{E:contourC}
		\begin{aligned}
			\mathcal{C}_1 &:= \left\{ 3 \bfr (16N)^{-1/3} e^{i\theta} \mid \theta \in [-\pi/3, \pi/3] \right\}, \\
			\mathcal{C}_2 &:= \left\{ 3 \bfr (16N)^{-1/3} e^{\pi\I/3}(1 - t) + 2\bfR e^{\pi\I/3} t \mid t \in [0,1] \right\}, \\
			\mathcal{C}_3 &:= \left\{ \bfR + \sqrt{3}\bfR e^{i\theta} \mid \theta \in [-\pi/2, \pi/2] \right\}.
		\end{aligned}
	\end{equation}
	
	\begin{figure}
		\centering
		\includegraphics[height=6cm]
		{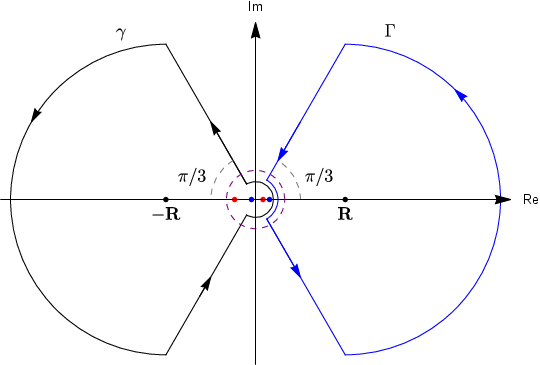}
		\caption{The blue (resp.\ red) points indicate the locations of $(16 N)^{-1/3} x_i$ for $i \in \llbracket \ell_0\rrbracket$ (resp.\ $(16 N)^{-1/3} y_j$ for $j \in \llbracket k_0\rrbracket$). Local modifications occur inside the dashed purple circle, whose radius is of order $\Or(N^{-1/3})$.}
		\label{F:deform_c12}
	\end{figure}

	\begin{proof}[Proof of Lemma~\ref{lem:pt_cvg}]
	We apply saddle-point analysis to derive the limiting kernel. The method is standard, and a similar scaling limit has been carried out in~\cite{BFP09}. See also Section 6.1~of~\cite{BF08} for a detailed explanation how to do the asymptotic analysis, which goes back to the strategy used in~\cite{GTW00}. Therefore, rather than presenting the full details of the saddle-point analysis, we highlight only the key ingredients. The main difference in our setting is the presence of extra finite products, some of them having poles. So when deforming the contours, one sometimes crosses these poles and needs to take care of the associated residues.
	
	Define
	\begin{equation}\label{E:xy}
			x:=4N+(16N)^{1/3}\xi,\quad
			y:=4N+(16N)^{1/3}\zeta.
	\end{equation} 	

		\textbf{For $\kexpresc_{N,11}$.} We deform the contour $\Gamma_{-1/2,-\bmb}$ for the variable $w$ in $\kexpresc_{N,11}$ to the contour $\gamma$. As we do not cross poles (for all $N$ large enough), we obtain
		\begin{equation}\label{E:sca11}
			\begin{aligned}
				\kexpresc_{N,11}(\xi,\zeta) =& -\frac{(16N)^{1/3} }{(2\pi\I)^2} \oint_{\gamma} \dx w \oint_{\Gamma_{1/2}} \dx v \,
				\frac{E(x, w; y, v)}{v - w}
				\prod_{i = 1}^{k} \frac{v + \beta_i}{w + \beta_i} .
			\end{aligned}
		\end{equation}
		 We deform the contour $\Gamma_{1/2}$ for the variable $v$ to $\Gamma$. Since this deformation does not cross any new poles, we obtain
		\begin{equation}\label{E:k11}
			\begin{aligned}
				\frac{-(16N)^{1/3} }{(2\pi\I)^2} \oint_{\gamma} \dx w \oint_{\Gamma} \dx v \,
			\frac{e^{4Nf(v) +\tau(2N)^{2/3}h(v)- (16N)^{1/3} v \zeta}}{e^{4Nf(w) +\tau(2N)^{2/3}h(w)- (16N)^{1/3} w \xi}}
			\frac{1}{v - w}
			\prod_{i = 1}^{k} \frac{v + \beta_i}{w + \beta_i}
			\end{aligned}
		\end{equation}
		with $f(v) := -v - \frac{1}{4} \log(1 - 2v) + \frac{1}{4} \log(1 + 2v)$ and $h(v):=\log(1-4v^2)$.
		
		We now proceed to identify steep descent paths for $f(v)$ and $-f(w)$. Observe that the function $f$ admits a degenerate critical point at $p_c = 0$, where both the first and second derivatives vanish, while the third derivative satisfies $f^{(3)}(p_c) = 8$. A Taylor expansion of $f$ and $h$ in a neighborhood of $p_c$ yields
		\begin{equation}
				f(v)=\frac{4v^3}{3}+\Or(v^4),\quad
				h(v)=-4v^2+\Or(v^4).
		\end{equation}
		
		It is easy to verify that $\gamma \setminus \mathcal{C}_1$ (resp.\ $\Gamma \setminus \mathcal{D}_1$) forms a descent path for the function $f(v)$ (resp.\ $-f(w)$)\footnote{In the sense that ${\rm Re}(f(v))$ is strictly decreasing along these curves, which can be rigorously verified by differentiating ${\rm Re}(f(v))$ with respect to the parameterizations of the respective curves. A similar computation appears in (4.44) and (4.45) of~\cite{BFP09}.}, see also the proof of Lemma~4.4 in~\cite{BFP09} for further details. As the path are steep descent, for any $\delta > 0$, the contribution of the double integral over $B_{\delta}^c \cap (\gamma \times \Gamma)$ is of order $\Or(e^{-\delta N})$.
Note that $\mathcal{C}_1$ and $\mathcal{D}_1$ are local modifications of order $\Or(N^{-1/3})$ to the steepest descent paths, so they do not affect the previous step.

		Also, we have
		\begin{equation}\label{Ec}
			\prod_{i=1}^{k}\frac{(16N)^{-1/3}v+\beta_i}{(16N)^{-1/3}w+\beta_i}=\prod_{i=1}^{k_0}\frac{v+y_i}{w+y_i}+\Or(N^{-1/3}).
		\end{equation}
Substituting $w\mapsto (16 N)^{-1/3}w$, $v\mapsto (16 N)^{-1/3}v$ in~\eqref{E:k11}, removing the higher order corrections in $f(v)$ and $h(v)$ and the error terms in \eqref{Ec} leads to an overall error terms of order $\Or(N^{-1/3})$. At this point we can extend the integrals to infinity with a correction only of order $\Or(e^{-\delta N})$.
Combining these steps we get
		\begin{equation}
			\begin{aligned}
				 &\eqref{E:k11}=\frac{1}{\left(2\pi\I\right)^2}\int_{\leftcontour{-\bfy_{\llbracket k_0\rrbracket}}{ }}\dx w \int_{\rightcontour{-\bfy_{\llbracket k_0\rrbracket},w}{}}\dx v\frac{e^{v^3/3-\tau v^2-v\zeta}}{e^{w^3/3-\tau w^2-w\xi}}\frac{ 1}{v-w}\prod_{i=1}^{k_0}\frac{v+y_i}{w+y_i}+\Or(N^{-1/3}),
			\end{aligned}
		\end{equation}
		where the two paths do not touch each other. Substituting $v\mapsto v+\tau$ and $w\mapsto  w+\tau$, taking $N$ to infinity, the right hand side is equal to
		\begin{equation}\label{E:fh}
			\frac{e^{\tau(\xi-\zeta)}}{\left(2\pi\I\right)^2}\int_{\leftcontour{-\tau-\bfy_{\llbracket k_0\rrbracket}}{}}\dx w \int_{\rightcontour{-\tau-\bfy_{\llbracket k_0\rrbracket},w}{}}\dx v\frac{e^{v^3/3 -v \left(\zeta +\tau ^2\right)}}{e^{w^3/3 -w \left(\xi +\tau ^2\right)}}\frac{ 1}{v-w}\prod_{i=1}^{k_0}\frac{v+y_i+\tau}{w+y_i+\tau},
		\end{equation}
 		Multiplying by the conjugation~\eqref{E:conjugation_for_limit} and taking $N\to\infty$ we obtain the limit kernel.

		\medskip

		\textbf{For $\kexpresc_{N,22}$.} The analysis for this kernel proceeds identically to that of $\kexpresc_{N,11}$ (if we exchange the roles of $(x,w)$ with $(y,v)$) and is therefore omitted. \medskip

		\textbf{For $\kexpresc_{N,21}$.} Note that for the factor inside the integral we have
		\begin{equation}
			\begin{aligned}
&(16N)^{\frac{k_0+\ell_0}{3}}\prod_{i=1}^{k}\left((16 N)^{-1/3}v+\beta_i\right)\prod_{j=1}^{\ell}\left((16 N)^{-1/3}w-\alpha_i\right) \\
				=& \prod_{i=k_0+1}^k\tilde\beta_i\prod_{j=\ell_0+1}^\ell(-\tilde\alpha_j)\prod_{i=1}^{k_0}\left(v+y_i\right)\prod_{j=1}^{\ell_0}\left(w-x_j\right)+\Or(N^{-1/3}).
			\end{aligned}
		\end{equation}
		After applying a similar saddle point analysis as above, we obtain
		\begin{equation}
			\begin{aligned}
				&   \lim_{N\to\infty}G_{N,21} \kexpresc_{N,21}(\xi,\zeta)\\
				=&\frac{1}{\left(2\pi\I\right)^2}\int_{\leftcontour{}{}} \dx w\int_{\rightcontour{w}{}}\dx v \frac{e^{v^3/3-v(\zeta+\tau^2)}}{e^{w^3/3-w(\xi+\tau^2)}}\frac{ 1}{v-w}\prod_{i=1}^{k_0}\left(v+y_i+\tau\right)\prod_{j=1}^{\ell_0}\left(w-x_j+\tau\right).
			\end{aligned}
		\end{equation}

\medskip
		\textbf{For $\kexpresc_{N,12}$.} We consider the case $\xi\leq \zeta$. The procedure for the case $\xi>\zeta$ is identical to this, and hence we omit the proof.
We denote the limit of the integrand by
\begin{equation}
	g_{12}(\xi,w;\zeta,v):=\frac{e^{v^3/3 - v\left(\zeta+\tau^2\right)}}{e^{w^3/3 - w\left(\xi+\tau^2\right)}}  \frac{1}{v-w} \frac{1}{\prod_{\substack{i=1}}^{k_0}\left(w+y_i+\tau\right)\prod_{\substack{  j=1}}^{\ell_0} \left(v-x_j+\tau\right)}.
\end{equation}

Using Cauchy's residue theorem, we have
		 \begin{equation}\label{E:decomp}
			\begin{aligned}
				\kexpresc_{N,12}(\xi,\zeta)=&-\frac{(16N)^{1/3}}{\left(2\pi\I\right)^2}\oint_{\gamma}\dx w\oint_{\Gamma}\dx v\frac{E(x, w; y, v)}{ \prod_{i=1}^{k} (w + \beta_i) \prod_{j=1}^{\ell} (v - {\alpha}_j) }\frac{1}{v-w}\\
				&+\frac{(16N)^{1/3}}{2\pi\I}\oint_{\Gamma}\dx v\frac{e^{v(x-y)}}{ \prod_{i=1}^{k} (v + \beta_i) \prod_{j=1}^{\ell} (v - {\alpha}_j) }\\
				&-\frac{(16N)^{1/3}}{\left(2\pi \I\right)^2}\oint_{\Gamma_{\bma_{\llbracket \ell_0\rrbracket}}}\dx v\oint_{\gamma}\dx w\frac{E(x, w; y, v)}{ \prod_{i=1}^{k} (w + \beta_i) \prod_{j=1}^{\ell} (v - {\alpha}_j)}\frac{1}{v-w},
			\end{aligned}
		\end{equation}
		where the contours for $v$ and $w$ are chosen so that they are disjoint. We now proceed to compute the limit of the terms appearing on the right-hand side of~\eqref{E:decomp}.
		
		\textit{First term in~\eqref{E:decomp}.} We denote by $K_{N,12}^1$ the first term in~\eqref{E:decomp}. Note that
		\begin{equation}
			\begin{aligned} &(16 N)^{-\frac{k_0+\ell_0}{3}}\frac{1}{\prod_{i=1}^{k}\left((16 N)^{-1/3}w+\beta_i\right)\prod_{j=1}^{\ell}\left((16 N)^{-1/3}v-\alpha_j\right)}\\
				=&\frac{1}{\prod_{i=1}^{k_0}\left(w+y_i\right)\prod_{j=1}^{\ell_0}\left(v-x_j\right)}\frac{1}{\prod_{i=k_0+1}^k\tilde\beta_i\prod_{j=\ell_0+1}^\ell\left(-\tilde\alpha_j\right)}+\Or(N^{-1/3})
			\end{aligned}
		\end{equation}
		Applying the same procedure as the one for $\kexpresc_{N,11}$, we have
		\begin{equation}
\begin{aligned}
&			\lim_{N\to\infty}G_{N,12}K_{N,12}^1(\xi,\zeta)\\
&=\frac{1}{\left(2\pi\I\right)^2}\int_{\leftcontour{-\tau+\bfx_{\llbracket\ell_0\rrbracket},-\tau-\bfy_{\llbracket k_0\rrbracket} }{}}\dx w\int_{\rightcontour{-\tau+\bfx_{\llbracket\ell_0\rrbracket},-\tau-\bfy_{\llbracket k_0\rrbracket},w}{ }}\dx v \,g_{12}(\xi,w;\zeta,v).
\end{aligned}
		\end{equation}
		
		\textit{Second term in~\eqref{E:decomp}.} We denote by $K_{N,12}^2$ the second term in~\eqref{E:decomp}. We claim that for any $\xi\leq \zeta$, it holds
		\begin{equation}\label{E:rmdZero}
			(16 N)^{-\frac{k_0+\ell_0}{3}}K_{N,12}^2(\xi,\zeta)=\Or(N^{-1/3}).
		\end{equation}
		Let $N_0 \in \Z_{>0}$. We define a modified contour $\Gamma(N_0)$, which coincides with $\Gamma$ except that in the definition of each sub-contour $\mathcal{C}_i$ in~\eqref{E:contourC}, the parameter $N$ is replaced by $N_0$. In particular, we can choose $N_0$ appropriately so that $\Gamma\left(N_0\right)$ encloses all points $\alpha_j$ for $j \geq \ell_0+1$, but do not include the points $\alpha_j:=(16 N)^{-1 / 3} x_j$ for all $j \in \llbracket \ell_0 \rrbracket$ and also not the poles $\beta_i$ for $i \in \llbracket k \rrbracket$ (since we assume $\tilde{\boldsymbol{\beta}} \geq 0$ ). This is possible for all $N$ large enough. This gives
		\begin{equation}
			K_{N,12}^2(\xi,\zeta)=\frac{(16N)^{1/3}}{2\pi\I}\oint_{\Gamma(N_0)}\dx v\frac{e^{v(\xi-\zeta)}}{ \prod_{i=1}^{k} (v + \beta_i) \prod_{j=1}^{\ell} (v - {\alpha}_j)}.
		\end{equation}
		For $v\in \Gamma(N_0)$, we have ${\rm Re}(v)>0$ and
		\begin{equation}
			\min\{|v + \beta_i|,|v - {\alpha}_j|\}\geq\varepsilon>0,
		\end{equation}
		for some $\varepsilon$ independent of $N$, $\xi$ and $\zeta$. Together with $\xi\leq \zeta$, we have
		\begin{equation}\label{E:uni}
			|K_{N,12}^2(\xi,\zeta)|\leq C(16N)^{1/3}\varepsilon^{-(k+\ell+2)}.
		\end{equation}
		Then~\eqref{E:rmdZero} follows from the assumption $\min\{k_0,\ell_0\}\geq 1$.
		
		\textit{Third term in~\eqref{E:decomp}.} We by denote $K_{N,12}^3$ the third term in~\eqref{E:decomp}. We claim that for any $\xi\leq \zeta$, it holds
		\begin{equation}\label{E:k12rm}
			\lim_{N\to\infty} G_{N,12}K_{N,12}^3(\xi,\zeta)=\frac{-1}{\left(2\pi\I\right)^2}\oint_{\Gamma_{-\tau+\bfx_{\llbracket\ell_0\rrbracket}}}\dx v\int_{\leftcontour{ -\tau-\bfy_{\llbracket k_0\rrbracket},-\tau+\bfx_{\llbracket\ell_0\rrbracket},v}{}}\dx w g_{12}(\xi,w;\zeta,v).
		\end{equation}
        For the integral over $v$, we just take $\Gamma_{\bma_{\llbracket\ell_0\rrbracket}}=(16 N)^{-1/3} \Gamma_{-\tau+\bfx_{\llbracket\ell_0\rrbracket}}$. After the change of variable $(w,v)\to (16 N)^{-1/3} (w,v)$, the integral over $v$ is over a bounded region, with an integrand uniformly bounded in $N$, so that we can take the limit inside the integral. For the integral over $w$, we proceed as in $\kexpresc_{N,11}$ to get the result.
\end{proof}
	Next, we need to upgrade the pointwise convergence to convergence in trace class. We introduce a further conjugation which is of order one for $\xi,\zeta$ bounded, so this does not changes the results derived so-far:
	\begin{equation}
		T_{ij}:=\begin{cases}
			e^{3\bfR\left(\zeta-\xi\right)},&{\rm if}\ (i,j)=(1,1),\\
			e^{3\bfR\left(\xi-\zeta\right)},&{\rm if}\ (i,j)=(2,2),\\
			e^{3\bfR\left(\xi+\zeta\right)},&{\rm if}\ (i,j)=(2,1),\\
			e^{-3\bfR\left(\xi+\zeta\right)},&{\rm if}\ (i,j)=(1,2).
		\end{cases}
	\end{equation}

	\begin{lem}\label{lem:trace}
		For any $s>0$ and $N$ large enough, there exists a constant $\kappa>0$ such that
		\begin{equation}
			\left|  T_{ij} G_{N,ij}\kexpresc_{N,ij}(\xi ,\zeta)\right|\leq C e^{-\kappa (\xi+\zeta)},\quad\forall\, \xi,\zeta\geq s.
		\end{equation}
Combining this with Lemma~\ref{lem:pt_cvg}, for which the estimates on the error terms holds true uniformly for $\xi,\zeta$ in a bounded set, then for any $s_0\in\R$ fixed, then the same estimate holds true for all $\xi,\zeta\geq s_0$ for some new constant $C=C(s_0)$.
	\end{lem}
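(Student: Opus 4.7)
The strategy is to refine the saddle-point analysis of Lemma~\ref{lem:pt_cvg} with uniform tracking of the $\xi, \zeta$ dependence. Since $T_{ij}G_{N,ij}$ contributes at most a factor of $e^{(3\bfR+|\tau|)(|\xi|+|\zeta|)}$ in absolute value (modulo a $\xi, \zeta$-independent normalization by $\Phi_{N,\tilde\bma,\tilde\bmb}^{j-i}$), it suffices to prove the stronger bound
\begin{equation*}
|\kexpresc_{N,ij}(\xi,\zeta)| \leq C\, e^{-(3\bfR+|\tau|+\kappa)(\xi+\zeta)}
\end{equation*}
for $\xi, \zeta \geq s$. The strengthened decay is obtained by deforming, in the rescaled variables, the inner portions of the contours $\gamma$ and $\Gamma$ from Lemma~\ref{lem:pt_cvg} to pass through the shifted points $w = -M$ and $v = +M$, where $M := 3\bfR + |\tau| + \kappa + \bfr$. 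This shift is admissible because all the bounded rescaled poles — namely $\{-y_j\}_{j \leq k_0}$ and $\{x_i\}_{i \leq \ell_0}$ — lie in $[-\bfr, \bfr] \subset (-M, M)$.

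\textbf{Bounding the integrand.} Along the deformed $w$-contour through $-M$, the steep-descent property for the cubic exponent gives $|e^{-w^3/3 + w(\xi + \tau^2)}| \leq Ce^{-M\xi}$ for $\xi \geq s$, with even stronger Airy-type decay $e^{-(2/3)(\xi+\tau^2)^{3/2}}$ taking over when $\xi+\tau^2 \geq M^2$ (the true saddle then lies beyond $-M$). An analogous bound $|e^{v^3/3 - v(\zeta + \tau^2)}| \leq Ce^{-M\zeta}$ holds on the $v$-contour through $+M$. The outer pieces $\mathcal{D}_2 \cup \mathcal{D}_3$ and $\mathcal{C}_2 \cup \mathcal{C}_3$ contribute at most $Ce^{-cN}$ by the steep-descent estimate already used in Lemma~\ref{lem:pt_cvg}, which easily dominates any remaining polynomial growth in $\xi, \zeta$. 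The polynomial prefactors $\prod(v+\beta_j), \prod(w-\alpha_i)$ or their reciprocals, as well as $1/(v-w)$ (with $|v-w| \geq 2M$ on the new contours), are uniformly bounded on these contours by quantities polynomial in $M$, and therefore do not affect the exponential decay.

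\textbf{The $K_{12}$ entry, extension, and main obstacle.} For $K_{12}$, the case split $\xi \gtrless \zeta$ and the residue contribution of~\eqref{E:1} require individual treatment: each case reduces to the analysis above, and the residue term is bounded directly as in~\eqref{E:rmdZero}--\eqref{E:uni}, with an additional $\Or(N^{1/3 - (k_0+\ell_0)/3})$ factor that is harmless under $\min\{k_0, \ell_0\} \geq 1$. Extending from $\xi, \zeta \geq s$ to arbitrary $s_0 \in \R$ follows by combining with Lemma~\ref{lem:pt_cvg}, whose uniform convergence on compact sets yields a uniform bound on $[s_0, s]^2$, making the exponential bound there trivial with a new constant $C(s_0)$. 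The principal technical obstacle will be verifying that the inner-arc deformation through $\pm M$ remains a descent path for the cubic exponent while the full contour continues to enclose all required poles, including those of non-critical indices at distance $\Or(N^{1/3})$ in rescaled variables; the geometric verification follows the pattern of Lemma~4.4 in~\cite{BFP09}, but requires extra bookkeeping due to the thick-boundary structure and, for $K_{12}$, the more intricate double-contour arrangement.
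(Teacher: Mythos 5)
Your plan begins with a step that does not hold: you claim that it suffices to prove the bound $|\kexpresc_{N,ij}(\xi,\zeta)| \leq C e^{-(3\bfR+|\tau|+\kappa)(\xi+\zeta)}$ for the \emph{unconjugated} kernel. But the raw kernel does not decay in both variables. Take $\kexpresc_{N,11}$: the integrand contains $e^{w x}$ with $x=4N+(16N)^{1/3}\xi$ and $w$ on the rescaled contour $\gamma$, whose inner arc $\mathcal{D}_1$ has $\mathrm{Re}(w)$ up to roughly $2\bfr > 0$; hence the kernel actually \emph{grows} in $\xi$. The decay in $\xi$ comes entirely from the $e^{-3\bfR\xi}$ piece of the conjugation $T_{11}=e^{3\bfR(\zeta-\xi)}$ together with the fact that $\mathrm{Re}(w)$ is bounded above by a constant smaller than $3\bfR$; if you replace $T_{11}$ by the crude upper bound $e^{3\bfR(|\xi|+|\zeta|)}$ you throw away precisely the cancellation you need, and the stronger bound you then require from $\kexpresc$ is false.

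The contour deformation you propose is also inadmissible, and your own stated justification shows why. You want to move the rescaled $w$-contour so that it passes through $w=-M$ with $M = 3\bfR+|\tau|+\kappa+\bfr > \bfr$. But the $w$-contour (for $K_{11}$, say) must enclose the rescaled poles $\{-y_j\}_{j\le k_0}$; equivalently, in the limit kernel, $\leftcontour{-\bfy}{}$ must keep all $-y_j$ to its left, so its vertex must stay to the right of every $-y_j \in [-\bfr,\bfr]$. Pushing the vertex to $-M < -\bfr$ puts every such pole on the wrong side. Your remark that $[-\bfr,\bfr]\subset(-M,M)$ does not make the shift admissible — it confirms that shifting to $-M$ crosses these poles. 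The same issue occurs for the $v$-contour in $K_{22}$ relative to the poles $\{x_i\}_{i\le\ell_0}$. Picking up residues could in principle repair this, but you do not do so, and the residue bookkeeping would not leave you with the clean exponential you claim; the paper deliberately avoids this.

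The paper's actual strategy is asymmetric and uses the conjugation in an essential way. For each entry it deforms only the contour that is \emph{not} pinned by nearby poles: for $K_{11}$ it pushes the $v$-contour out to $\hat\Gamma$ with $\mathrm{Re}(v)\ge 4\bfR$ (admissible because $K_{11}$ has no bounded $v$-poles), getting $|e^{-v\zeta}e^{3\bfR\zeta}|\le e^{-\bfR\zeta}$, while leaving the $w$-contour as $\gamma$ and relying on $\mathrm{Re}(w)\le\bfr<3\bfR$ together with the $e^{-3\bfR\xi}$ part of $T_{11}$ to get decay in $\xi$; for $K_{21}$ both contours can be pushed outward (to $\hat\gamma$, $\hat\Gamma$), for $K_{12}$ neither can, so the conjugation alone must do the work, using the decomposition \eqref{E:decomp} and the bounds $\mathrm{Re}(w)\le\bfr$, $\mathrm{Re}(v)>0$. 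Your proposal collapses this case-by-case structure into a symmetric shift that is not available. Until you account for which contours are actually free to move and use the sign structure of $T_{ij}$ rather than its modulus, the argument does not go through.
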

		\begin{proof}

 Since the arguments for all four entries are largely analogous, we focus on the most intricate case, namely $\kexpresc_{N,12}$. For the remaining kernels, we only outline the key components. As before, we assume $\xi \leq \zeta$, and it remains to analyze each term on the right-hand side of~\eqref{E:decomp}.

\textit{First term in~\eqref{E:decomp}.} Substituting $w\mapsto (16N)^{-1/3}w$ and $v\mapsto (16N)^{-1/3}v$, we have
\begin{equation}
	\begin{aligned}
		K_{N,12}^1(\xi,\zeta)&=	\frac{1}{\left(2\pi\I\right)^2}\oint_{(16N)^{1/3}\gamma}\dx w\oint_{(16N)^{1/3}\Gamma}\dx v  \frac{F_N((16N)^{-1/3}v,(16N)^{-1/3}w)e^{w\xi-v\zeta}}{v-w}\\
	\end{aligned}
\end{equation}
with
\begin{equation}
	F_N(v,w):=\frac{e^{4Nf(v) +\tau(2N)^{2/3}h(v)}}{e^{4Nf(w) +\tau(2N)^{2/3}h(w)}}\frac{1}{\prod_{i=1}^{k} (w + \beta_i) \prod_{j=1}^{\ell} (v - {\alpha}_j)}.
\end{equation}
Note that by the definition of $\gamma$ and $\Gamma$, there exists a constant $\varepsilon>0$ independent of $N,\xi,\zeta$ such that
\begin{equation}
	\min\left\{|v-w|,|w+y_i|,|v-x_j|\big| i\in\llbracket k_0\rrbracket,j\in\llbracket\ell_0\rrbracket,v\in(16N)^{1/3}\gamma,\ w\in(16N)^{1/3}\Gamma\right\}\geq \varepsilon.
\end{equation}
Also note that
\begin{multline}
\bigg|\frac{G_{N,12}}{\prod_{i=1}^{k} ((16N)^{-1/3}w + \beta_i) \prod_{j=1}^{\ell} ((16N)^{-1/3}v-{\alpha}_j)} \\ \qquad-\frac{1}{\prod_{i=1}^{k_0}(w+y_i)\prod_{j=1}^{\ell_0}\left(v-x_j\right)}\bigg|=\Or(N^{-1/3})
\end{multline}
uniformly for $v\in(16N)^{1/3}\gamma,\ w\in(16N)^{1/3}\Gamma$. Furthermore, for $w\in(16N)^{1/3}\gamma$, we have
\begin{equation}
	{\rm Re}(w)\leq \cos\left(\pi\I/3\right)2\bfr=\bfr.
\end{equation}
Hence, there exists a constant $\kappa>0$ such that
\begin{equation}\label{E:w}
	\left|e^{w\xi-3\bfR \xi}\right|\leq C e^{-\kappa\xi},\quad\forall w\in(16N)^{1/3}\gamma.
\end{equation}
On the other hand, for all $v\in (16N)^{1/3}\Gamma$, we have ${\rm Re}(v)>0$, hence, we have
\begin{equation}
	\left|e^{-v\zeta-3\bfR \zeta}\right|\leq C e^{-\kappa\zeta},\quad\forall v\in(16N)^{1/3}\Gamma.
\end{equation}
From the saddle point analysis made for $\xi=\zeta=0$, we know that the kernel converges to its limit kernel. With respect to this case, here we have the extra factor $e^{w\xi-v\zeta}$ and the conjugation. Thus combining the arguments above and applying the same saddle point analysis, we obtain
\begin{equation}
	\left|  T_{12} G_{N,12}K_1(\xi ,\zeta)\right|\leq C e^{-\kappa (\xi+\zeta)}.
\end{equation}

\textit{Second term in~\eqref{E:decomp}.} As we saw in~\eqref{E:uni}, this term is uniformly bounded for any $\xi,\zeta$ in bounded set. Moreover, for $v\in\Gamma$, we have
\begin{equation}
	{\rm Re}(v)\leq \bfR+\sqrt{3}\bfR\leq 3\bfR.
\end{equation}
Thus, after multiplying it with conjugation function $e^{-3\bfR(\xi+\zeta)}$, the result will also decay exponentially in $\xi$ and $\zeta$.

\textit{Third term in~\eqref{E:decomp}.} This case can be handled via essentially same method as the one for the first term in~\eqref{E:decomp}. Indeed, substituting $w\mapsto (16N)^{-1/3}w$ and $v\mapsto (16N)^{-1/3}v$, we have
\begin{equation}
		K_{N,12}^3(\xi,\zeta)=-\frac{ 1}{\left(2\pi \I\right)^2}\oint_{\Gamma_{\bfx_{\llbracket \ell_0\rrbracket}}}\dx v\oint_{(16N)^{1/3}\gamma}\dx w\frac{F_N((16N)^{-1/3}v,(16N)^{-1/3}w)e^{w\xi-v\zeta}}{v-w}.
\end{equation}
Note that we can choose contour $\Gamma_{\bfx_{\llbracket \ell_0\rrbracket}}$ such that for all $v\in \Gamma_{\bfx_{\llbracket \ell_0\rrbracket}}$, we have
\begin{equation}
	{\rm Re}(v)>-2\bfr.
\end{equation}
Then we can apply the same method as the one for first term in~\eqref{E:decomp} to obtain the exponential decay.

\textbf{For $\kexpresc_{N,11}$. } We modify the contour $\Gamma$ by replacing $3\bfr$ in the definitions of $\mathcal{C}_1$ and $\mathcal{C}_2$ with $8\bfR$, and denote the resulting contour by $\hat\Gamma$. This modification is still of local order $\Or(N^{-1/3})$ and thus does not affect the saddle point analysis. Moreover, for $v \in \hat\Gamma$, we have
\begin{equation}
	{\rm Re}(v) \geq 4\bfR,
\end{equation}
so there exists a constant $\kappa > 0$ such that
\begin{equation}
	\left| e^{-v\zeta + 3\bfR\zeta} \right| \leq C e^{-\kappa \zeta}.
\end{equation}
Note also that~\eqref{E:w} remains valid. Hence, we obtain the desired exponential decay.

\textbf{For $\kexpresc_{N,22}$.} The result follows by an argument analogous to that for $\kexpresc_{N,11}$.

\textbf{For $\kexpresc_{N,21}$.} In addition to modifying the contour $\Gamma$ to $\hat\Gamma$, we also replace the contour $\gamma$ with $\hat\gamma$, defined as the reflection of $\hat\Gamma$ across the imaginary axis. With this choice, we have
\begin{equation}
	{\rm Re}(w) \leq -4\bfR,\quad {\rm Re}(v) > 4\bfR, \quad \forall\, w \in \hat\gamma,\ v \in \hat\Gamma.
\end{equation}
Consequently, there exists a constant $\kappa > 0$ such that
\begin{equation}
	\max\left\{ \left| e^{-v\zeta + 3\bfR\zeta} \right|, \left| e^{w\xi + 3\bfR\xi} \right| \right\} \leq C e^{-\kappa \zeta}.
\end{equation}
This completes the proof.
\end{proof}
		
Now we have all the ingredients to prove Theorem~\ref{E:AC_Conj}.
\begin{proof}[Proof of Theorem~\ref{E:AC_Conj}]
Lemma~\ref{lem:pt_cvg} implies that for $\xi, \zeta$ in a bounded set, we have
\begin{equation}
	\lim_{N \to \infty} G_{N,ij} \, \kexpresc_{N,ij}(\xi, \zeta)
	= K_{ij}^{\bfx_{\llbracket \ell_0\rrbracket}-\tau,\bfy_{\llbracket k_0\rrbracket}+\tau}(\xi+\tau^2, \zeta+\tau^2),
\end{equation}
for $i,j \in \{1,2\}$. In combination with Lemma~\ref{lem:trace}, this pointwise convergence allows us to get the convergence of the Fredholm determinant as well. In short, the Fredholm determinant defined through the Fredholm series converges if we can show that we can take the $N\to\infty$ limit inside the series. This is done by dominated convergence.

Indeed, we have
\begin{equation}\label{eq4.60}
\begin{aligned}
&\Pb \left(L^{\tilde\bma, \tilde\bmb, \bfx, \bfy}_{N,\tau} \leq 4N + (16N)^{1/3} s \right) \\
&=
\sum_{n\geq 0} \frac{(-1)^n}{n!} \sum_{k_1,\ldots,k_n=1}^2 \int_s^\infty d\xi_1  s \int_s^\infty d\xi_n \det[\kexpresc_{N,k_i k_j}(\xi_i,\xi_j)]_{1\leq i,j\leq n}\\
&=\sum_{n\geq 0} \frac{(-1)^n}{n!} \sum_{k_1,\ldots,k_n=1}^2\int_s^\infty d\xi_1  s \int_s^\infty d\xi_n \det[ T_{k_i k_j} G_{N,k_i k_j}\kexpresc_{N,k_i k_j}(\xi_i,\xi_j)]_{1\leq i,j\leq n},
\end{aligned}
\end{equation}
where in the last step we introduced the conjugations, that do not change the value of the determinant.
By Lemma~\ref{lem:trace}, $|T_{k_i k_j} G_{N,k_i k_j}\kexpresc_{N,k_i k_j}(\xi_i,\xi_j)|\leq C e^{-\kappa(\xi_i+\xi_j)}$. Using this and Hadamard's bound, saying that for a $n\times n$ matrix $A$ with $|A_{i,j}|\leq 1$, $|\det(A)|\leq n^{n/2}$, we get that for all $N$ large enough,
\begin{equation}
|\eqref{eq4.60}|\leq \sum_{n\geq 0} \frac{C^n n^{n/2}}{n!} \sum_{k_1,\ldots,k_n=1}^2 \Big(\int_s^\infty d\xi_1 e^{-2\kappa \xi_1}\Big)^n<\infty.
\end{equation}
Thus by dominated convergence we can take the limit inside the sums / integrals, which leads to
\begin{equation}
\begin{aligned}
&\lim_{N\to\infty} \Pb \left(L^{\tilde\bma, \tilde\bmb, \bfx, \bfy}_{N,\tau} \leq 4N + (16N)^{1/3} s \right)\\
&=\sum_{n\geq 0} \frac{(-1)^n}{n!} \sum_{k_1,\ldots,k_n=1}^2 \int_s^\infty d\xi_1  s \int_s^\infty d\xi_n \det[ T_{k_i k_j} K_{k_i k_j}^{\bfx_{\llbracket \ell_0\rrbracket}-\tau,\bfy_{\llbracket k_0\rrbracket}+\tau}(\xi_i+\tau^2, \xi_j+\tau^2)]_{1\leq i,j\leq n}\\
&=\sum_{n\geq 0} \frac{(-1)^n}{n!} \sum_{k_1,\ldots,k_n=1}^2 \int_s^\infty d\xi_1  s \int_s^\infty d\xi_n \det[ K_{k_i k_j}^{\bfx_{\llbracket \ell_0\rrbracket}-\tau,\bfy_{\llbracket k_0\rrbracket}+\tau}(\xi_i+\tau^2, \xi_j+\tau^2)]_{1\leq i,j\leq n}\\
&=F_{|\mathcal{I}(\tilde\bma)|,\, |\mathcal{I}(\tilde\bmb)|,\, \bfx_{\mathcal{I}(\tilde\bma)} - \tau,\, \bfy_{\mathcal{I}(\tilde\bmb)} + \tau}(s + \tau^2). \qedhere
\end{aligned}
\end{equation}
\end{proof}
		
\section{New representation for $F_{{\rm BR},\tau}$}\label{sec:BR}

\subsection{Proof of Corollary~\ref{cor:toBR}}\label{subsectBR}
Since $F_{{\rm BR},\tau}(s)=F_{{\rm BR},-\tau}(s)$ for all $\tau,s\in\R$, we may, without loss of generality, assume that $\tau\geq 0$. By~\eqref{E:gT}, it suffices to express the kernel in~\eqref{E:GBR} with
\begin{equation}\label{E:s}
	\begin{aligned}
		&k = \ell = 1,\\
		&\bfx = -\bfy = \tau
	\end{aligned}
\end{equation}
in terms of the Airy kernel and the Airy function. Since the procedure is identical for all four kernel entries, we only treat the entry
$K_{12}^{\bfx,\bfy}$. Assuming $\xi \leq \zeta$, we have
\begin{equation} \label{e52}
	\begin{aligned}
K_{12}^{\bfx,\bfy}(\xi,\zeta)=\frac{1}{\left(2\pi \I\right)^2}
\int_{\leftcontour{\tau}{}}\dx w\bigg[\int_{\rightcontour {\tau,w}{}}\dx v f_{12}(\xi,w;\zeta,v)- \oint_{\Gamma_{\tau}} \dx v f_{12}(\xi,w;\zeta,v)\bigg],
\end{aligned}
\end{equation}
where
\begin{equation}
f_{12}(\xi,w;\zeta,v):=\frac{e^{v^3/3 - v\zeta}}{e^{w^3/3 - w\xi}}  \frac{1}{v-w}\frac{1}{ \left(w-\tau\right) \left(v-\tau\right)}.
\end{equation}
Note that we have
\begin{equation}\label{e54}
	\begin{aligned}
		&-\frac{1}{\left(2\pi \I\right)^2}
		\int_{\leftcontour{\tau}{}}\dx w  \oint_{\Gamma_{\tau}} \dx v f_{12}(\xi,w;\zeta,v) =e^{\tau^3/3 - \tau\zeta}\frac{1}{2\pi \I}
		\int_{\leftcontour{\tau}{}}\dx w    \frac{e^{-w^3/3 + w\xi}}{ \left(w-\tau\right)^2 }.
	\end{aligned}
\end{equation}
By changing the contour of the variable $w$ from $\leftcontour{\tau}{}$ to $\leftcontour{}{\tau}$ and applying Cauchy's residue theorem, we obtain
\begin{equation}\label{e55}
	\begin{aligned}
		&\frac{1}{\left(2\pi \I\right)^2}
		\int_{\leftcontour{\tau}{}}\dx w\int_{\rightcontour {\tau,w}{}}\dx v f_{12}(\xi,w;\zeta,v)\\
		=&\frac{1}{\left(2\pi \I\right)^2}
		\int_{\leftcontour{}{\tau}}\dx w \int_{\rightcontour {\tau,w}{}}\dx v \frac{e^{v^3/3 - v\zeta}}{e^{w^3/3 - w\xi}}  \frac{1}{v-w}\frac{1}{ \left(w-\tau\right) \left(v-\tau\right)} +e^{-\frac{\tau^3}{3}+\tau\xi}\frac{1}{ 2\pi \I }
		\int_{\rightcontour {\tau}{}}\dx v\frac{e^{v^3/3 - v\zeta}}{  \left(v-\tau\right)^2}.
	\end{aligned}
\end{equation}
Substituting~\eqref{e54} and~\eqref{e55} into~\eqref{e52}, we obtain
\begin{equation}\label{E:airy}
	\begin{aligned}
		K_{12}^{\bfx,\bfy}(\xi,\zeta)=&\frac{1}{\left(2\pi \I\right)^2}
		\int_{\leftcontour{}{\tau}}\dx w \int_{\rightcontour {\tau,w}{}}\dx v \frac{e^{v^3/3 - v\zeta}}{e^{w^3/3 - w\xi}}  \frac{1}{v-w}\frac{1}{ \left(w-\tau\right) \left(v-\tau\right)}\\
		&+e^{\tau^3/3 - \tau\zeta}\frac{1}{2\pi \I}
		\int_{\leftcontour{\tau}{}}\dx w    \frac{e^{-w^3/3 + w\xi}}{ \left(w-\tau\right)^2 }+e^{-\frac{\tau^3}{3}+\tau\xi}\frac{1}{ 2\pi \I }
		\int_{\rightcontour {\tau}{}}\dx v\frac{e^{v^3/3 - v\zeta}}{  \left(v-\tau\right)^2}.
	\end{aligned}
\end{equation}
The key ingredients are the integral representation of Airy function, i.e.,
\begin{equation}\label{E:ai}
	\Ai(\xi)=\frac{1}{2\pi\I}\int_{\rightcontour{0}{}}\dx ve^{\frac{v^3}{3}-v\xi}=\frac{1}{2\pi\I}\int_{\leftcontour{}{0}}\dx ve^{-\frac{v^3}{3}+v\xi}.
\end{equation}

\textit{First term in~\eqref{E:airy}.} For $w\in\leftcontour{}{\tau}$ and $v\in\rightcontour {\tau,w}{}$, we have
\begin{equation}\label{E:r}
	\begin{aligned}
		&\frac{1}{w-\tau}=-\int_0^\infty\dx\gamma e^{(w-\tau)\gamma},\quad\frac{1}{v-\tau}=\int_0^\infty\dx\lambda e^{-(v-\tau)\lambda}.
	\end{aligned}
\end{equation}
Together with~\eqref{E:ai} and definition of Airy kernel~\eqref{E:airk}, the first term in~\eqref{E:airy} is equal to
\begin{equation}
	-\int_0^\infty\dx\lambda\int_0^\infty\dx\gamma\kai(\xi+\gamma,\zeta+\lambda)e^{\tau(\lambda-\gamma)}.
\end{equation}

\textit{Second term in~\eqref{E:airy}.} Applying Cauchy's residue theorem, it is equal to
\begin{equation}\label{E:sc}
	e^{\tau^3/3 - \tau\zeta}\frac{1}{2\pi \I}
	\int_{\leftcontour{}{\tau}}\dx w    \frac{e^{-w^3/3 + w\xi}}{ \left(w-\tau\right)^2 }-e^{\tau(\xi-\zeta)}(\tau^2-\xi)
\end{equation}
Applying~\eqref{E:r} and~\eqref{E:ai}, we have
\begin{equation}
\begin{aligned}
		\eqref{E:sc}&=e^{\tau^3/3 - \tau\zeta}\int_0^{\infty}\dx\lambda\int_0^\infty\dx\gamma \Ai(\xi+\lambda+\gamma)e^{-\tau(\lambda+\gamma)}-e^{\tau(\xi-\zeta)}(\tau^2-\xi)\\
		&=e^{\tau^3/3 - \tau\zeta}\int_0^{\infty}\dx x  \Ai(\xi+x )e^{-\tau x}x-e^{\tau(\xi-\zeta)}(\tau^2-\xi),
\end{aligned}
\end{equation}
where in the last step we use the substitution $x=\lambda+\gamma$ and interchange the order of integration.

\textit{Third term in~\eqref{E:airy}.} Applying~\eqref{E:r} and~\eqref{E:ai}, we can rewrite it as
\begin{equation}
	e^{-\frac{\tau^3}{3}+\tau\xi}\int_0^\infty\dx\lambda\Ai(\zeta+\lambda)\lambda e^{\tau\lambda}.
\end{equation}

If $\xi > \zeta$, the procedure is identical to the case $\xi \leq \zeta$, except that the term \mbox{$e^{\tau(\xi - \zeta)}(\tau^2 - \xi)$} is replaced by $e^{\tau(\xi - \zeta)}(\tau^2 - \zeta)$. By combining both cases, we obtain the desired formula for $K_{12}^{{\rm BR},\tau}$ in~\eqref{E:brtau} .

\subsection{Analytic proof for $F_{{\rm BR},0}$}
		In this section, we give a direct analytic proof for Corollary~\ref{cor:toBR} with $\tau=0$, that is,
		\begin{equation}\label{E:toshow}
			F_{{\rm BR},0}(t) = \det(\Id-P_{t} K^{{\rm BR},0} P_{t})_{L^2(\R)}
		\end{equation}
where $F_{{\rm BR},0}(t)$ is defined by its classical form in \eqref{E:classical_BR} below.

		With $\tau=0$, we have
		\begin{equation}
			K_{12}^{{\rm BR},0}(\xi,\zeta)=L(\xi,\zeta)+\min\{\xi,\zeta\},
		\end{equation}
		where
		\begin{equation}\label{E:L}
			\begin{aligned}
				L(\xi, \zeta) := &-\int_0^\infty \mathrm{d}x \int_0^\infty \dx y\, \kai(x+\xi, y+\zeta)\\
				&+ \int_0^\infty \mathrm{d}x \int_0^\infty \dx y\, \left( \Ai(\xi+x+y) + \Ai(\zeta+x+y) \right).\\
			\end{aligned}
		\end{equation}
		An elementary computation gives
		\begin{equation}\label{E:kernel_K}
			\begin{pmatrix}
				K_{11}^{{\rm BR},0} &K_{12}^{{\rm BR},0}\\
				K_{21}^{{\rm BR},0} &K_{22}^{{\rm BR},0}
			\end{pmatrix}=\begin{pmatrix}
			\partial_2^2 L(\xi, \zeta) & L(\xi, \zeta) + \min\{\xi, \zeta\} \\
			 \partial_1^2 \partial_2^2 L(\xi, \zeta)& \partial_1^2 L(\xi, \zeta)
			\end{pmatrix}=:K^\top.
		\end{equation}
		 Applying $\det\left(\Id-K^\top\right)=\det\left(\Id-K\right)$ for a general kernel $K$, we can rewrite the right hand side of~\eqref{E:toshow} as
		\begin{equation}\label{E:newRep}
			\hat{F}(t) := \det\left(\Id - P_t K P_t\right)_{L^2(\R)}.
		\end{equation}
	
		\begin{prop}\label{cor:old_and_new}
			For all $t \in \R$, we have $\hat{F}(t) = F_{{\rm BR},0}(t)$.
		\end{prop}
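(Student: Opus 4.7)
The plan is to manipulate the $2\times 2$ Fredholm determinant $\hat{F}(t)$ analytically until it coincides with the classical Painlev\'e-transcendent form of $F_{{\rm BR},0}$ recalled in Appendix~\ref{AppBR}. The key observation is that all four entries of the matrix kernel $K$ in~\eqref{E:kernel_K} are built from a single scalar function $L$ and its partial derivatives, plus a rank-one piece $\min\{\xi,\zeta\}$ in $K_{12}$. First I would decompose
\begin{equation*}
L(\xi,\zeta) = -\int_0^\infty\!\!\int_0^\infty \kai(x+\xi,y+\zeta)\,\dx x\,\dx y + \phi(\xi) + \phi(\zeta), \qquad \phi(\xi) := \int_0^\infty u\,\Ai(\xi+u)\,\dx u,
\end{equation*}
and compute the entries of $K$ by integration by parts. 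Using the identity $\int_0^\infty \partial_y \kai(\,\cdot\,,y+\zeta)\,\dx y = -\kai(\,\cdot\,,\zeta)$, the derivatives $\partial_\zeta^2 L$, $\partial_\xi^2 L$, and $\partial_\xi^2\partial_\zeta^2 L$ reduce to $\kai$ plus explicit rank-one perturbations involving $\Ai$ and $\phi$. This exhibits each of $K_{11}$, $K_{22}$, and $K_{21}$ as the Airy kernel modulo finite-rank corrections, while $K_{12}$ equals $L$ plus the rank-one term $\min\{\xi,\zeta\}$.

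Next, I would apply a Schur-complement reduction to the block Fredholm determinant. Writing
\begin{equation*}
\det(\Id - P_t K P_t) = \det(\Id - P_t K_{11} P_t)\,\det\!\bigl(\Id - P_t[K_{22} + K_{21}(\Id - P_t K_{11} P_t)^{-1} K_{12}] P_t\bigr),
\end{equation*}
and repeatedly using the matrix-determinant lemma around the reference operator $P_t \kai P_t$, each factor collapses to $F_2(t) := \det(\Id - P_t \kai P_t)$ times a finite polynomial in scalar resolvent pairings such as $\langle (\Id - P_t \kai P_t)^{-1} \Ai, \Ai\rangle$, $\langle (\Id - P_t \kai P_t)^{-1} \Ai, \phi\rangle$, and $\langle (\Id - P_t \kai P_t)^{-1} \phi, \phi\rangle$. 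These are precisely the ingredients of the Ferrari--Spohn representation of $F_{{\rm BR},0}$ from~\cite{FS05a}, so after collecting the terms the identity $\hat F(t) = F_{{\rm BR},0}(t)$ becomes a direct algebraic match.

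The main obstacle will be the bookkeeping in the first step: the piece $\min\{\xi,\zeta\}$ is only continuous and not smooth, so its second derivatives produce distributional contributions that must be tracked and reconciled against the boundary values picked up when integrating by parts in the Airy double integrals. A related subtlety is that $\kai$ is not trace-class on $L^2(\R)$ by itself, so the rearrangements should be performed at the level of the integral kernels on $L^2(t,\infty)$ before repackaging the result as a Fredholm determinant. Once the rank-one corrections are isolated cleanly, the final identification with the classical form reduces to matching coefficients and is essentially algebraic.
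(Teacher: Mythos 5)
Your broad strategy — express the kernel entries in terms of $\kai$ plus corrections, factor out an Airy-determinant, and match against the classical representation — is in the same spirit as the paper's, but several concrete claims in the proposal are incorrect and the key technical ingredient is missing.

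First, $\min\{\xi,\zeta\}$ is not a rank-one kernel; it is the Brownian-motion covariance, an operator of infinite rank whose kernel grows linearly. This is precisely what makes the problem delicate, and it is the reason the paper does not use a Schur complement. Instead, the paper invokes the specialized decomposition from Lemma 3.9.37 of~\cite{AGZ10} (here Lemma~\ref{lem:decomposition_of_kernel}), which is tailored to a lower-left entry of the form $c+e$ with $e(\xi,\zeta)=\min\{\xi,\zeta\}$: the lemma cancels the $e$-contribution against the bracketed terms and factors out exactly $\det(\Id-\sigma)=F_{\rm GUE}(t)$, leaving a genuinely rank-two determinant $\det(\Id-X\hat R)$. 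Your Schur complement $\det(\Id-P_tK_{11}P_t)\det(\Id-P_t[K_{22}+K_{21}(\Id-P_tK_{11}P_t)^{-1}K_{12}]P_t)$ does not produce $F_{\rm GUE}(t)$ as a factor, since $K_{11}=\partial_2^2 L$ is $\kai$ plus a rank-one perturbation and $\det(\Id-P_t(\kai+\text{rank }1)P_t)\neq F_{\rm GUE}(t)$; the perturbation must be redistributed before anything recognizable emerges. Second, your claim that $\partial_\xi^2\partial_\zeta^2 L$ reduces to $\kai$ plus a rank-one correction is wrong: from Corollary~\ref{cor:toBR} with $\tau=0$, $K_{21}^{{\rm BR},0}=-\partial_\xi\partial_\zeta\kai$, a genuinely different (infinite-rank) kernel, so the hoped-for collapse of all entries to $\kai$-plus-finite-rank does not occur for $K_{21}$.

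Finally, the last step — "a direct algebraic match" with the Ferrari--Spohn representation — substantially underestimates the work. Even after the rank-two reduction, the pairings $\langle\psi_i\mid\phi_j\rangle_t$ that appear are not directly those of Appendix~\ref{AppBR}; relating them to Painlev\'e quantities requires the differential and integration-by-parts identities (3.9.93)--(3.9.95) of~\cite{AGZ10}, the solution of several coupled linear systems for pairings like $\langle f\mid\Phi\rangle_t$, $\langle g\mid\Phi\rangle_t$, $\langle g\mid H\rangle_t$, and the auxiliary observations $G^-=-H^-$ and $\tfrac12(F^-)^2=-G^-$. The paper itself remarks that the final expansion is long enough to require symbolic computation. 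Your proposal identifies the right ingredients but does not supply the decomposition lemma that makes the $\min$-term tractable, and it asserts finite-rank structure and an algebraic coincidence where neither holds as stated.
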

        The remainder of the section deals with the proof of this statement.

		\subsubsection{Decomposition of kernel}
		The starting point of the proof is the following decomposition of kernel:
		\begin{lem}[Lemma 3.9.37 of \cite{AGZ10}]\label{lem:decomposition_of_kernel}
			Let $a, b, c, d, e, \sigma$, and $w$ be functions on $\R^2$ such that $d = \sigma + w$, and assume that all Fredholm determinants appearing below are well defined. Then,
			\begin{equation}\label{E:dright}
				\det\left(\Id -
				\begin{pmatrix}
					a & b \\
					c + e & d
				\end{pmatrix}
				\right)_{L^2(\R)}
				= \det\left(\Id - X \hat{R} \right)_{L^2(\R)}   \det\left(\Id - \sigma\right)_{L^2(\R)},
			\end{equation}
			where
			\begin{align}\label{E:components}
				X &=
				\begin{pmatrix}
					a + b e & (a + b e) b \\
					c + d e & w + (c + d e) b
				\end{pmatrix},
				\qquad
				\hat{R} =
				\begin{pmatrix}
					1 & 0 \\
					0 & (\Id - \sigma)^{-1}
				\end{pmatrix},
			\end{align}
			and
			\begin{equation}
				be(x, y) := \int_0^\infty \mathrm{d}z\, b(x, z)\, e(z, y).
			\end{equation}
		\end{lem}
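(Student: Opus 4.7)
The plan is to prove the factorization by constructing an explicit sequence of right multiplications that transforms $\Id - M$, where $M := \begin{pmatrix} a & b \\ c + e & d \end{pmatrix}$ is the inner block, directly into $\Id - X \hat R$. Specifically, I will produce three block operators $T_1, \hat R, T_2$ such that
\begin{equation*}
(\Id - M) \cdot T_1 \cdot \hat R \cdot T_2 \;=\; \Id - X \hat R,
\end{equation*}
where $T_1$ and $T_2$ are block triangular with identity diagonal (hence of Fredholm determinant $1$) while $\hat R = \mathrm{diag}(\Id, (\Id - \sigma)^{-1})$ contributes the factor $\det(\Id - \sigma)^{-1}$. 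Taking determinants on both sides and using multiplicativity of the Fredholm determinant then yields the claim.

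\textbf{Key steps.} First, I take $T_1 := \begin{pmatrix} \Id & 0 \\ e & \Id \end{pmatrix}$. Right multiplication by $T_1$ modifies the first column of $\Id - M$: the $(1,1)$ entry becomes $\Id - a - be$, and, crucially, the $(2,1)$ entry becomes $-(c+e) + (\Id - \sigma - w)e = -(c + de)$, where I invoke the hypothesis $d = \sigma + w$ to absorb the bare $e$ into $de$. Second, multiplying by $\hat R$ rescales the second column, turning the $(2,2)$ entry $\Id - \sigma - w$ into $\Id - w(\Id - \sigma)^{-1}$ and the $(1,2)$ entry $-b$ into $-b(\Id - \sigma)^{-1}$. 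Third, I take $T_2 := \begin{pmatrix} \Id & b(\Id - \sigma)^{-1} \\ 0 & \Id \end{pmatrix}$; right multiplication by $T_2$ adjusts the second column so that the $(1,2)$ entry becomes $-(a+be)b(\Id - \sigma)^{-1}$ and the $(2,2)$ entry becomes $\Id - [w + (c+de)b](\Id - \sigma)^{-1}$. Comparing with the definition of $X$ and $\hat R$, the resulting matrix is exactly $\Id - X \hat R$.

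\textbf{Main obstacle and verification.} The conceptual content of the proof is concentrated in identifying the correct form of $T_1$ and $T_2$. The form of $T_1$ is driven by the need to convert the bare $e$ in the $(2,1)$ entry of $M$ into $de$, which is only possible because of the relation $d = \sigma + w$; the precise cancellation $-(c+e) + (\Id - \sigma - w)e = -(c+de)$ is the one place where this hypothesis is used. The form of $T_2$ is then essentially forced by matching the second column of $\Id - X\hat R$. Once the three operators are identified, the remainder of the argument is routine block matrix algebra, and the functional-analytic justification (so that the Fredholm determinants split multiplicatively and $\hat R$ is well defined) follows from the assumption in the statement that all Fredholm determinants appearing are well defined, together with the invertibility of $\Id - \sigma$ implicit in the formula for $\hat R$. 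As a sanity check, in the scalar case both sides of the claimed identity collapse to $(1-a)(1-\sigma-w) - b(c+e)$.
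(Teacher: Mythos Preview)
Your proof is correct. The block identity $(\Id - M)\,T_1\,\hat R\,T_2 = \Id - X\hat R$ checks out line by line, and the determinant calculation follows since $T_1,T_2$ are unipotent block-triangular (hence have Fredholm determinant $1$) while $\det\hat R = \det(\Id-\sigma)^{-1}$.

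Note, however, that the paper does \emph{not} supply its own proof of this lemma: it is simply quoted as Lemma~3.9.37 of \cite{AGZ10}. So there is no ``paper's proof'' to compare against here. Your argument is essentially the standard block-LU/Schur-complement factorization that underlies the AGZ proof as well; the only substantive observation is the one you isolate, namely that the relation $d=\sigma+w$ is exactly what makes the $(2,1)$ entry collapse to $-(c+de)$ after the first column operation.
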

		Note that
		\begin{equation}
			\partial_2^2L(x,y)=K^{{\rm BR},0}_{11}=\kai(x, y) + \left( 1 - \int_0^\infty \mathrm{d}\gamma\, \Ai(x + \gamma) \right) \Ai(y).
		\end{equation}
		Hence, we may apply Lemma~\ref{lem:decomposition_of_kernel} to the kernel $K$ in~\eqref{E:kernel_K}, with the following identification:
		\begin{equation}\label{E:new}
			\begin{aligned}
				\left[\begin{array}{rr}
					a & b \\
					c & d
				\end{array}\right]
				&:= \left[\begin{array}{rr}
					P_t \partial_1^2 L P_t & P_t \partial_1^2 \partial_2^2 L P_t \\
					P_t L P_t & P_t \partial_2^2 L P_t
				\end{array}\right], \qquad\qquad\quad
				e(x, y) := P_t \min\{x, y\} P_t, \\
				w(x, y) &:= P_t\left(1 - \int_0^\infty \mathrm{d}\gamma\, \Ai(x + \gamma)\right) \Ai(y)P_t \qquad \sigma(x, y) := P_t\kai(x, y)P_t.
			\end{aligned}
		\end{equation}
		Let $\rho_t := (\Id - \sigma)^{-1}$. To proceed, we compute the components of the right-hand side of~\eqref{E:dright} under the setting~\eqref{E:new}. Using integration by parts and the exponential decay of the kernel $L$ and its derivative, a straightforward computation yields the following lemma.	
		\begin{lem}\label{lem:decomposition}
		Let $\hat F(t)$ be defined as in~\eqref{E:newRep}. Then we have

			\begin{equation}\label{E:hatF}
				\hat F(t)=\det\left(\Id - X \hat{R} \right)_{L^2(\R)} \, F_{{\rm GUE}}(t),
			\end{equation}
			where
			\begin{equation}\label{E:hatr}
				\hat{R} =
				\begin{pmatrix}
					1 & 0 \\
					0 & \rho_t
				\end{pmatrix}
			\end{equation}
			and
			\begin{equation}\label{E:Xdcomp}
				X(\xi,\zeta) := X_\phi(\xi) \, X_\psi(\zeta)
			\end{equation}
			with
			\begin{equation}
				X_\phi(\xi) :=
				\begin{bmatrix}
					P_t\phi_1(\xi) & 0 \\
					P_t\phi_2(\xi) & P_t\phi_3(\xi)
				\end{bmatrix}, \quad
				X_\psi(\zeta) :=
				\begin{bmatrix}
					1 & P_t\psi_1(\zeta) \\
					0 & P_t\psi_2(\zeta)
				\end{bmatrix}.
			\end{equation}
			The functions $\phi_i$ and $\psi_i$ are defined by
			\begin{align}
				\phi_1(x) &= t \partial_1 \kai(x,t)
				+ \int_0^\infty \!\dx \gamma \partial_1\kai(x, t+\gamma) + \Ai(x), \label{E:phi_1} \\
				\phi_2(x) &= -t \int_0^\infty \!\dx \gamma \, K_{\mathrm{Ai}}( x+\gamma,t)
				+ t \int_0^\infty \!\dx \gamma \, \Ai(\gamma + t) + L(x,t), \label{E:phi_2} \\
				\phi_3(x) &= 1 - \int_0^\infty \!\dx \gamma \, \Ai(\gamma+x), \\
				\psi_1(x) &= \partial_1 \kai(x, t), \qquad \psi_2(x) = \Ai(x).
			\end{align}
			 	
		\end{lem}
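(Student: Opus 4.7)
The plan is to apply Lemma~\ref{lem:decomposition_of_kernel} directly with the identifications in~\eqref{E:new}. First I would verify the hypothesis $d=\sigma+w$, which reduces to checking that
\begin{equation*}
\partial_2^2 L(\xi,\zeta) = \kai(\xi,\zeta) + \Big(1 - \int_0^\infty \dx\gamma\, \Ai(\xi+\gamma)\Big)\Ai(\zeta).
\end{equation*}
This follows from~\eqref{E:L} by a direct computation using $\Ai''(z)=z\Ai(z)$ to handle the purely Airy summands, together with the defining identity~\eqref{E:airk} for the $\kai$ summand. The lemma then immediately gives
\begin{equation*}
\hat F(t) = \det(\Id - X\hat R)_{L^2(\R)} \cdot \det(\Id - \sigma)_{L^2(\R)},
\end{equation*}
and the second factor is $F_{\rm GUE}(t)$ since $\sigma=P_t\kai P_t$.

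Next, I would explicitly compute the four entries of $X$ from~\eqref{E:components}. Each of the compositions $be$, $de$, $(a+be)b$, $(c+de)b$ reduces to a one-variable integral of the form $\int_t^\infty\dx z\,F(\cdot,z)\min\{z,\cdot\}$, with $F$ a derivative of $L$ or of $\kai$. For $\zeta\geq t$ we have $\partial_z\min\{z,\zeta\}=\Id_{z\leq\zeta}$, so integration by parts in $z$ (twice when the kernel admits a second $z$-derivative) converts each composition into a boundary term at $z=t$, a boundary term at $z=\infty$ that vanishes by the super-exponential decay of $\Ai$ and $L$, and a telescoping single integral. For instance,
\begin{equation*}
be(\xi,\zeta) = -t\,\partial_1^2\partial_2 L(\xi,t) + \partial_1^2 L(\xi,t) - \partial_1^2 L(\xi,\zeta),
\end{equation*}
so the $\zeta$-dependence cancels in $X_{11}=a+be$ and the entry depends on $\xi$ alone.

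The final step is to rewrite the resulting functions of $\xi$ and $\zeta$ in closed form using~\eqref{E:L} and~\eqref{E:airk}. This produces three building blocks: derivatives of $\kai$ evaluated at $(\cdot,t)$ or $(\cdot,t+\gamma)$; boundary Airy values such as $\Ai(\cdot)$ coming from the $\Ai''$ identity; and the integrated Airy quantity $1-\int_0^\infty\dx\gamma\,\Ai(\cdot+\gamma)$ that already features in $w$. Elementary identities such as $\int_0^\infty\dx x\,\partial_1\kai(x+\xi,v)=-\kai(\xi,v)$ reduce the $L$-type expressions to these three shapes. Matching them entry by entry yields the rank-two factorization
\begin{equation*}
X(\xi,\zeta) = \begin{bmatrix}\phi_1(\xi) & 0 \\ \phi_2(\xi) & \phi_3(\xi)\end{bmatrix}\begin{bmatrix}1 & \psi_1(\zeta) \\ 0 & \psi_2(\zeta)\end{bmatrix},
\end{equation*}
which is~\eqref{E:Xdcomp} after multiplication on both sides by $P_t$.

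The main obstacle I anticipate is bookkeeping rather than anything conceptual: tracking the boundary terms produced by the repeated integrations by parts, in particular the contribution from the Lipschitz-only kernel $\min\{z,\zeta\}$ (whose distributional second derivative is $-\delta(z-\zeta)$), and isolating the residual $\int_0^\infty\dx\gamma\,\Ai(\gamma+t)$-type term appearing in $\phi_2$. Justifying the vanishing of the boundary terms at $z=\infty$ requires the uniform decay of $L$ and its partial derivatives, which follows from~\eqref{E:L} and the standard Airy asymptotics.
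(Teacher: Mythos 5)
Your proposal is correct and follows essentially the same route as the paper: apply Lemma~\ref{lem:decomposition_of_kernel} with the identifications~\eqref{E:new}, then compute the entries of $X$ by integrating by parts against the rank-one-in-$z$ kernel $\min\{z,\cdot\}$ and simplify via the derivative identities for $L$, $\kai$, and $\Ai$; your sample computation of $be$ matches what the paper's one-line proof ("integration by parts and the exponential decay of the kernel $L$ and its derivative") is suppressing. The only small imprecision is the remark about integrating by parts twice: a single integration by parts in $z$ (producing the boundary term at $z=t$, a vanishing term at infinity, and the integral $-\int_t^\zeta\partial_z v\,\dx z$ which is then evaluated by the fundamental theorem) already yields the $\zeta$-cancellation in the diagonal entries and hence the rank-two factorization, so the distributional second derivative of $\min$ never enters.
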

	
		\subsubsection{Link to Painlev\'e transcendent}
			We begin by recalling the definition of the Baik–Rains distribution $F_{{\rm BR},0}(t)$. Through the whole section, we choose a fixed $t\in\R$. We adapt the notation from~\cite{AGZ10}. For two functions $a, b : \R \to \R$, we define
		\begin{equation}\label{E:inner}
			\langle a \mid b \rangle_t
			:= \int_0^\infty \dx x \int_0^\infty \dx y \,
			a(x+t)\, \rho_t(x+t, y+t)\, b(y+t).
		\end{equation}
		Note that $\langle a \mid b \rangle_t = \langle b \mid a \rangle_t$, since $\kai$ is symmetric.	With a slight abuse of notation, for two functions $\varphi, \psi : \R^2 \to \R$, we set
		\begin{equation}
			\langle \varphi \mid \psi \rangle_t
			:= \langle \varphi(\, \,, t) \mid \psi(\, \,, t) \rangle_t.
		\end{equation}
		Let $f(x):=\Ai(x)$ and $g(x, t) := \kai(x, t)$, we define $q(t) := \Ai(t) + \langle f \mid g \rangle_t$ and $\bfx(t):=e^{-\int_0^\infty \dx x\, q(x + t)}$. The Baik–Rains distribution $F_0(t)$ admits the following representation (see (2.20) in~\cite{BR00}):
		\begin{equation}\label{E:classical_BR}
			F_{{\rm BR},0}(t) = \frac{\mathrm{d}}{\mathrm{d}t} \left( F_{{\rm GUE}}(t) \Upsilon(t)\right)=F_{{\rm GUE}}'(t)\Upsilon(t) +\bfx(t)^2F_{{\rm GUE}}(t).
		\end{equation}
		with $\Upsilon(t) := \int_{-\infty}^t \mathrm{d}s\, \bfx(s)^2$ It was shown in Proposition~A.1 of~\cite{FS05a} that
		\begin{equation}\label{E:upsilon}
			\Upsilon(t)= t + \Psi(t) - \langle (1 + F) \mid (F + H) \rangle_t,
		\end{equation}
		where
		\begin{equation}\label{E:notation_1}
			\begin{aligned}
				\Psi(x) &:= \int_0^\infty \mathrm{d}\gamma \int_0^\infty \mathrm{d}\lambda\, \Ai(x + \gamma + \lambda), \\
				F(x) &:= -\int_0^\infty \mathrm{d}\gamma\, \Ai(x + \gamma), \\
				H(x, t) &:= \int_0^\infty \mathrm{d}\gamma\, \kai(x, t + \gamma).
			\end{aligned}
		\end{equation}
		Let $R(x, y; t)$ denote the resolvent of the operator $P_t \kai P_t$, i.e., $1 + R(x, y; t) = \left( \Id - P_t \kai P_t \right)^{-1}(x, y)$, then we have (see Equation (3.8.16) in~\cite{AGZ10})
		 \begin{equation}
			F_{{\rm GUE}}'(t) = R(t, t; t)\, F_{{\rm GUE}}(t)
		\end{equation}
		Using the fundamental identity between the resolvent and the kernel (Equation (3.4.20) in~\cite{AGZ10}), we have $R(t, t; t) = g(t, t) + \langle g \mid g \rangle_t.$ Substituting this into~\eqref{E:classical_BR}, we conclude that
		\begin{equation}
			F_0(t) = \left( \Upsilon(t)\left( g(t, t) + \langle g \mid g \rangle_t \right) + \bfx(t)^2 \right) F_{{\rm GUE}}(t).
		\end{equation}
		Comparing with~\eqref{E:hatF}, to prove Proposition~\ref{cor:old_and_new}, it suffices to show that
		\begin{equation}\label{E:toShow}
			\det\left(\Id - X \hat{R} \right) = \Upsilon(t) \left( g(t,t) + \langle g \mid g \rangle_t \right) + \bfx(t)^2.
		\end{equation}

		\begin{proof}[Proof of Proposition~\ref{cor:old_and_new}]
			Using the cyclic property of the determinant and~\eqref{E:Xdcomp}, we obtain
			\begin{equation}\label{E:determinant_finite_rank}
				\begin{aligned}
					\det&\left(\Id - X \hat{R} \right)
					= \det\left(\Id - X_\psi \hat{R} X_\phi \right) \\
					&= \left(1 - \int_0^\infty \!\dx x \, \phi_1(x+t) - \langle \psi_1 \mid \phi_2 \rangle_t \right)
					\left(1 - \langle \psi_2 \mid \phi_3 \rangle_t \right)
					- \langle \psi_1 \mid \phi_3 \rangle_t \, \langle \psi_2 \mid \phi_2 \rangle_t,
				\end{aligned}
			\end{equation}
			where in the last step we used the explicit rank-two structure of the operator $X_\psi \hat{R} X_\phi$. By definition, we have
			\begin{equation}\label{E:rewrite}
				\begin{aligned}
					&\int_0^\infty\dx x\phi_1(x+t)=-tg(t)+G(t)-F(t)\\
					&\phi_2(x)=tG(x,t)-tF(t)+\Phi(x,t)+\Psi(x)+\Psi(t)\\
					&\phi_3(x)=1+F(x),\qquad \psi_1(x)=\psi(x,t),\qquad \psi_2(x)=f(x),
				\end{aligned}
			\end{equation}
			where
			\begin{equation}
				\begin{aligned}
					G(x,t)&:=-\int_0^\infty\dx\gamma\kai(x+\gamma,t),\qquad\qquad		\Psi(x):=\int_0^\infty\dx\lambda\int_0^\infty\dx \gamma\Ai(x+\lambda+\gamma),\\
					\Phi(x,t)&:=-\int_0^\infty\dx\lambda\int_0^\infty\dx \gamma\kai(x+\lambda,t+\gamma),\qquad
					\psi(x,t):=\partial_1\kai(x,t).
				\end{aligned}
			\end{equation}
			By~\eqref{E:rewrite}, we can express $\langle \psi_i \mid \phi_j \rangle_t$ with $i \in \{1,2\}$ and $j \in \{2,3\}$ in terms of the following quantities:
			\begin{align}
				&\inner{f\mid G }_t,\quad \inner{f\mid 1}_t ,\quad\inner{f\mid F}_t\label{E:toCompute1},\\
				&\inner{\psi \mid G }_t,\quad \inner{\psi \mid 1},\quad \inner{\psi \mid \Phi }_t,\quad\inner{\psi \mid\Psi},\quad\inner{\psi \mid F}_t\label{E:toCompute3}\\							&\inner{f\mid \Phi }_t,\quad\inner{f\mid\Psi}_t.\label{E:toCompute2}
			\end{align}
			The main task of the proof is to express the above quantities in the form $\inner{x \mid y}_t$ with $x, y \in \{F, H, 1\}$, as well as $\bfx(t)$ and the quantity $g(t,t) + \inner{g \mid g}_t$. The quantities in~\eqref{E:toCompute1} are already known in the literature (see Equation (3.9.95) in~\cite{AGZ10})
			\begin{equation}\label{E:f_painleve}
				\begin{aligned}
					&\inner{f\mid G}_t=\frac{\bfx+\bfx^{-1}}{2} F(t)-\frac{\bfx-\bfx^{-1}}{2},\\
					&\inner{f\mid 1}_t=-\frac{\bfx-\bfx^{-1}}{2},\qquad \inner{f\mid F}_t=1-\frac{\bfx+\bfx^{-1}}{2},
				\end{aligned}
			\end{equation}
			where we write $\bfx$ instead of $\bfx(t)$ to lighten the notation.	For~\eqref{E:toCompute3} and~\eqref{E:toCompute2}, we make use of the following identities from (3.9.93) and (3.9.94) of~\cite{AGZ10}:
			\begin{align}
				\frac{\dx }{\dx t}\langle \phi_1 \mid \phi_2 \rangle_t &= \langle \mathbf{D} \phi_1 \mid \phi_2 \rangle_t + \langle \phi_1 \mid \mathbf{D} \phi_2 \rangle_t - \langle \phi_1 \mid f \rangle_t \langle f \mid \phi_2 \rangle_t, \label{E:dt_bracket} \\
				\langle \phi_1' \mid \phi_2 \rangle_t + \langle \phi_1 \mid \phi_2' \rangle_t &= -\left( \phi_1^- + \langle g \mid \phi_1 \rangle_t \right) \left( \phi_2^- + \langle g \mid \phi_2 \rangle_t \right) + \langle f \mid \phi_1 \rangle_t \langle f \mid \phi_2 \rangle_t. \label{E:integration_by_part_bracket}
			\end{align}
			
			Here, for any sufficiently smooth function $\phi = \phi(x, t)$, we adopt the following notation (see page 178 of~\cite{AGZ10}) :
			\begin{equation}\label{E:derivative_notation}
				\begin{aligned}
					\phi'(x; t) &:= \frac{\partial \phi}{\partial x}(x; t), \qquad
					\phi^{-}:=\phi^{-}(t)=\phi(t ; t),\\
					\mathbf{D} \phi(x; t) &:= \left( \frac{\partial}{\partial x} + \frac{\partial}{\partial y} \right) \phi(x; t).
				\end{aligned}
			\end{equation}
			With a slight abuse of notation, if $\phi : \R \to \R$, we continue to write $\mathbf{D} \phi(x) := \phi'(x)$ and $\phi^-:=\phi(t)$.
			
			\textbf{For~\eqref{E:toCompute3}}. We apply~\eqref{E:integration_by_part_bracket} and the identities
			\begin{equation}
				\Psi' = F, \quad F' = f, \quad \Phi' = H, \quad G' = g, \quad g' = \psi,
			\end{equation}
			we have
			\begin{equation}\label{E:psi_mid}
				\begin{aligned}
					&\langle \psi\mid G\rangle_t=-\left(G^-+\langle g\mid G\rangle_t\right)\left(g^-+\langle g\mid g\rangle_t\right)+\langle f\mid g\rangle_t\langle f\mid G\rangle_t-\langle g\mid g\rangle_t,\\
					&\langle \psi\mid F\rangle_t=-\left(g^-+\langle g\mid g\rangle_t\right)\left(F^-+\langle g\mid F\rangle_t\right)+\langle f\mid g\rangle_t\langle f\mid F\rangle_t-\langle f\mid g\rangle_t,\\
					&\langle \psi\mid \Phi\rangle_t=-\left(g^-+\langle g\mid g\rangle_t\right)\left(\Phi^-+\langle g\mid\Phi\rangle_t\right)+\langle f\mid g\rangle_t\langle f\mid \Phi\rangle_t-\langle g\mid H\rangle_t\\
					&\langle \psi\mid\Psi\rangle_t=-(\Psi^-+\langle g\mid\Psi\rangle_t)\left(g^-+\langle g\mid g\rangle_t\right)+\langle f\mid\Psi\rangle_t\langle f\mid g\rangle_t-\langle g\mid F\rangle_t,\\
					&\langle \psi\mid 1\rangle_t=-(g^-+\langle g\mid g\rangle_t)\left(1+\langle g\mid 1\rangle_t\right)+\langle f\mid g\rangle_t\langle f\mid 1\rangle_t.\\
				\end{aligned}
			\end{equation}
			Hence, in addition to~\eqref{E:f_painleve}, it remains to compute
			\begin{align}
				&\inner{g\mid G}_t,\quad \inner{g\mid 1}_t,\quad\inner{g\mid F}_t,\label{E:toCompute4}\\
				&\inner{g\mid \Phi}_t,\quad \inner{f\mid \Phi}_t,\label{E:toCompute5}\\
				&\inner{g\mid\Psi}_t,\quad \inner{f\mid\Psi}_t,\label{E:toCompute6}\\
				&\inner{g\mid H}_t.\label{E:toCompute7}
			\end{align}
			The quantities in~\eqref{E:toCompute4} are already known~\cite[(3.9.95)]{AGZ10}
			\begin{equation}\label{E:g_painleve}
				\begin{aligned}
					\inner{g\mid G}_t&=\frac{\bfx+\bfx^{-1}}{2}-\frac{\bfx-\bfx^{-1}}{2} F^{-}+\left(F^-\right)^2 / 2-1\\
					\inner{g\mid 1}_t&=\frac{\bfx+\bfx^{-1}}{2}-1,\qquad \inner{g\mid F}_t=\frac{\bfx-\bfx^{-1}}{2}-F(t).
				\end{aligned}
			\end{equation}
			For~\eqref{E:toCompute5}, using the identities $F' = f$, $\Phi' = H$, together with~\eqref{E:integration_by_part_bracket}, we obtain
			\begin{equation}
				\begin{aligned}
					\langle f\mid \Phi\rangle_t+\langle F\mid H\rangle_t&=-\left(\Phi^-+\langle g\mid\Phi\rangle_t\right)\left(F^-+\langle g\mid F\rangle_t\right)+\langle f\mid F\rangle_t\langle f\mid\Phi\rangle_t,\\
					\langle H\mid 1\rangle_t&=-\left(\Phi^-+\langle g\mid\Phi\rangle_t\right)\left(1+\langle g\mid 1\rangle_t\right)+\langle f\mid \Phi\rangle_t\langle f\mid1\rangle_t,
				\end{aligned}
			\end{equation}
			solving this linear system and applying the identities in~\eqref{E:g_painleve}, we obtain
			\begin{equation}\label{E:midPhi}
				\begin{aligned}
					&\langle f\mid\Phi\rangle_t=-\frac{\langle F\mid H\rangle_t(\mathbf x+\mathbf x^{-1})-\langle H\mid 1\rangle_t\left(\mathbf x-\mathbf x^{-1}\right)}{2},\\
					&\langle g\mid\Phi\rangle_t=-\frac{\langle F\mid H\rangle_t\left(\mathbf x^{-1}-\mathbf x\right)+\langle H\mid 1\rangle_t\left(\mathbf x+\mathbf x^{-1}\right)+2\Phi^-}{2}.
				\end{aligned}
			\end{equation}	
			For~\eqref{E:toCompute6}, applying the same procedure to both $\langle f \mid \Psi \rangle_t$ and $\langle g \mid \Psi \rangle_t$, using the identity $\Psi' = F$, we obtain
			\begin{equation}\label{E:midPsi}
				\begin{aligned}
					&\langle f\mid\Psi\rangle_t=-\frac{\langle F\mid F\rangle_t(\mathbf x+\mathbf x^{-1})-\langle F\mid 1\rangle_t\left(\mathbf x-\mathbf x^{-1}\right)}{2},\\
					&\langle g\mid\Psi\rangle_t=-\frac{\langle F\mid F\rangle_t\left(\mathbf x^{-1}-\mathbf x\right)+\langle F\mid 1\rangle_t\left(\mathbf x+\mathbf x^{-1}\right)+2\Psi^-}{2}.\\
				\end{aligned}
			\end{equation}
			For~\eqref{E:toCompute7}, we define
			\begin{equation}
				\hat g(x,t):=\frac{\dx}{\dx x}H(x,t)
			\end{equation}
			Hence by~\eqref{E:integration_by_part_bracket},
			\begin{equation}\label{E:Hf_Hg}
				\begin{aligned}
					\langle \hat g\mid F\rangle_t+\langle H\mid f\rangle_t&=-\left(H^-+\langle g\mid H\rangle_t\right)\left(F^-+\langle g\mid F\rangle_t\right)+\langle f\mid F\rangle_t\langle f\mid H\rangle_t,\\
					\langle \hat g\mid 1\rangle_t&=-\left(H^-+\langle g\mid H\rangle_t\right)\left(1+\langle g\mid 1\rangle_t\right)+\langle f\mid H\rangle_t\langle f\mid 1\rangle_t.
				\end{aligned}
			\end{equation}	
			On the other hand, we have $\hat g(x,t)=F^-f(x)+g(x,t)$, together with~\eqref{E:f_painleve} and~\eqref{E:g_painleve}, we have
			\begin{equation}
				\begin{aligned}
					\langle \hat g\mid 1\rangle_t&=\langle g\mid 1\rangle_t+F^-\langle f\mid 1\rangle_t=\frac{\bfx+\bfx^{-1}}{2}-1+F^-\left(-\frac{\bfx-\bfx^{-1}}{2}\right),\\
					\langle \hat g\mid F\rangle_t&=\langle g\mid F\rangle_t+F^-\langle f\mid F\rangle_t=\frac{\bfx-\bfx^{-1}}{2}-F^{-}+F^-\left(	1-\frac{\bfx+\bfx^{-1}}{2}\right)
				\end{aligned}
			\end{equation}
			Plugging this back to~\eqref{E:Hf_Hg} and solving the linear equation, we obtain
			\begin{equation}\label{E:midH}
					\langle f\mid H\rangle_t=-\frac{\mathbf x-\mathbf x^{-1}-2F^-}{2},\qquad
					\langle g\mid H\rangle_t=-\frac{-\mathbf x-\mathbf x^{-1}+2+2H^-}{2}.
			\end{equation}
			Substituting~\eqref{E:g_painleve},~\eqref{E:midPhi},~\eqref{E:midPsi}, and~\eqref{E:midH} into~\eqref{E:psi_mid}, we obtain the desired representation of~\eqref{E:toCompute2}, whose explicit expression is omitted here due to its complexity. On the other hand, the expressions required for~\eqref{E:toCompute3} are already contained in~\eqref{E:midPhi} and~\eqref{E:midPsi}. Therefore, in conjunction with~\eqref{E:f_painleve}, we obtain the desired representations for all quantities in~\eqref{E:toCompute1},~\eqref{E:toCompute3}, and~\eqref{E:toCompute2}. Plugging this and
			\begin{equation}
				\int_0^\infty\dx x\phi_1(x+t)=-tg(t)+G(t)-F(t)
			\end{equation}
			back to~\eqref{E:determinant_finite_rank}, we then obtain
			\begin{equation}\label{E:final}
				\begin{aligned}
					\det\left(\Id - X \hat{R} \right) =&\left(g^-+\langle g\mid g\rangle_t\right)\Upsilon(t)\\
					&+\mathbf x\left(g^-+\langle g\mid g\rangle_t\right)\left(-G^--H^-+\frac12 \left(F^-\right)^2+G^-\right)+\mathbf x^2.\\
				\end{aligned}
			\end{equation}
			By definition and symmetry of Airy kernel, we have
			\begin{equation}
				G^-=-H^-.
			\end{equation}
			On the other hand, a straightforward calculation shows that
			\begin{equation}
				\int_0^\infty \dx x\kai(x+t,t)=\frac12\left(\int_0^\infty \dx x\Ai(x+t)\right)^2,
			\end{equation}
			which implies $\frac12 \left(F^-\right)^2=-G^-$.  This completes the proof of~\eqref{E:toShow}.
		\end{proof}
		\begin{rem}
			As already noted in Remark~3.9.42 of~\cite{AGZ10}, the evaluations of the
			determinants in~\eqref{E:final} are lengthy, but can be carried out using symbolic
			computations in \textit{Mathematica}. This is the approach we adopt, and the
			corresponding code is provided in the BonnData repository~\cite{FK2/5PUB1P_2025}.
		\end{rem}

\section{Proof of the variational formula}\label{sec:var}
In this section, we prove the variational formula in Theorem~\ref{thm:var}. By Theorem~\ref{thm:AC_conj}, for fixed $k, \ell \in \Z_{>0}$, we have
\begin{equation}\label{E:lhs}
	\lim_{N \to \infty} \Pb \left( L^{0,0,0,0}_{N,0} \leq 4N + (16N)^{1/3} s \right)
	= F_{k,\ell,0,0}(s).
\end{equation}
Note that $L^{0,0,0,0}_{N,0}$ corresponds to the case $\bma = \bmb = 0$ in~\eqref{E:ThickBd}. Therefore, to prove Theorem~\ref{thm:var}, it suffices to show that
\begin{equation}\label{E:var}
	{\rm l.h.s.\ of }~\eqref{E:lhs} = \Pb \left( \max_{t \in \R} \left\{
	\sqrt{2}\mathcal{B}_k(t) \Id_{t \geq 0}
	+ \sqrt{2} \wt{\mathcal{B}}_\ell(-t) \Id_{t < 0}
	+ \mathcal{A}_2(t) - t^2
	\right\} \leq s \right).
\end{equation}
For notational convenience, we write $L^{0,0,0,0}_{N,0}$ as $L_N$, and denote $L^{0,0}_{(i,j)\to(m,n)}$ by $L_{(i,j)\to(m,n)}$. Moreover, when $(i,j) = (-\ell+1, -k+1)$, we further abbreviate it as $L_{(m,n)}$.

In the setting of~\eqref{E:ThickBd}, we have
\begin{equation}\label{E:LN}
	L_{N} = \max\left\{ L_{(-\ell+1,\,1)\to(N,N)},\; L_{(1,\,-k+1)\to(N,N)} \right\}.
\end{equation}
For $\bfa, \bfb \in \Z^2$ with $(-\ell+1, -k+1) \le \bfa \le \bfb$, we denote by $\Gamma_{\bfa \to \bfb}$ the geodesic from $\bfa$ to $\bfb$, i.e., the path $\pi$ connecting $\bfa$ and $\bfb$ such that
\begin{equation}
	L_{\bfa \to \bfb} = \sum_{(r,s)\in\pi} \omega_{r,s}.
\end{equation}
We introduce the \textit{exit point} of the geodesic $\Gamma_{(-\ell+1, -k+1) \to (N, N)}$ from the thick boundary:
\begin{equation}
	Z^{\mathbf{N}} :=
	\begin{cases}
		\max\left\{ i \mid (i,0) \in \Gamma_{(1,-k+1) \to (N,N)} \right\}, & \text{if } L_N = L_{(1,-k+1) \to (N,N)}, \\[1ex]
		-\max\left\{j \mid (0,j) \in \Gamma_{(-\ell+1,1) \to (N,N)} \right\}, & \text{if } L_N = L_{(-\ell+1,1) \to (N,N)}.
	\end{cases}
\end{equation}
In view of \eqref{E:LN}, the geodesic $\Gamma_{(-\ell+1, -k+1) \to (N, N)}$ either moves horizontally until reaching the point $(1, -k+1)$ and then follows $\Gamma_{(1, -k+1) \to (N, N)}$, or moves vertically until reaching the point $(-\ell+1, 1)$ and then follows $\Gamma_{(-\ell+1, 1) \to (N, N)}$. Thus, $Z^{\mathbf{N}}$ marks the point where the geodesic $\Gamma_{(-\ell+1, -k+1) \to (N, N)}$ exits the thick boundary, which consists of the first $k$ rows and the first $\ell$ columns.

The proof of~\eqref{E:var} relies on two key ingredients: the convergence of exponential LPP on thin rectangles, and control over the exit point.
The former is already established in the literature (see, e.g.,~\cite{ABC12,Bar01,GW91}). The control of the exit time has also been studied in the literature under various regimes (see, e.g.,~\cite{BHA20,EJS23,SS20}). In our setting, we obtain the following result.
\begin{prop}\label{p:exit}
	For any $u\in\R_{\ge 0}$ and $N$ large enough, there exists $C,\kappa>0$ independent of $N$ and $u$ such that
	\begin{equation}
		\Pb\left(|Z^{\mathbf N}|\geq uN^{2/3}\right)\leq C  e^{-\kappa u}.
	\end{equation}
\end{prop}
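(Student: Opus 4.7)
By the symmetry of the model under exchanging rows and columns together with $\bma \leftrightarrow \bmb$, it suffices to prove the tail bound for $\Pb(Z^{\mathbf{N}} \geq uN^{2/3})$, i.e.\ the event that the geodesic realizing $L_N$ starts at $(1,-k+1)$ and exits the $k$-row horizontal boundary at some column $z \geq uN^{2/3}$. The plan is to adapt the strategy of \cite{BHA20} (developed there for $k=\ell=1$) by splitting the geodesic at its exit column and comparing against an elementary reference path.

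On the event $\{Z^{\mathbf{N}} = z\}$ with $z \geq 1$, the geodesic enters the bulk through $(z,1)$ and the LPP decomposes as $L_N = L^{\mathrm{bdy}}_{(1,-k+1)\to(z,0)} + L^{\mathrm{bulk}}_{(z,1)\to(N,N)}$, where the first term is the maximum last passage time over paths confined to the $k$-row boundary strip (so using only rate-$1/2$ weights), the second uses only rate-$1$ bulk weights, and the two pieces are independent. Since $L_N$ also dominates the reference path that rises vertically along column $1$ through the boundary strip and then takes a bulk path from $(1,1)$, we obtain the inclusion
\begin{equation*}
\{Z^{\mathbf{N}} \geq uN^{2/3}\} \subseteq \bigcup_{z \geq uN^{2/3}} \Bigl\{L^{\mathrm{bdy}}_{(1,-k+1)\to(z,0)} + L^{\mathrm{bulk}}_{(z,1)\to(N,N)} \geq L^{\mathrm{bdy}}_{(1,-k+1)\to(1,0)} + L^{\mathrm{bulk}}_{(1,1)\to(N,N)}\Bigr\}.
\end{equation*}
The Wishart edge (for fixed $k$) and the quadrant limit shape give $\E[L^{\mathrm{bdy}}_{(1,-k+1)\to(z,0)}] = 2(\sqrt{z}+\sqrt{k})^2 + O(1)$, $\E[L^{\mathrm{bulk}}_{(z,1)\to(N,N)}] = (\sqrt{N-z+1}+\sqrt{N})^2$, $\E[L^{\mathrm{bdy}}_{(1,-k+1)\to(1,0)}] = 2k$ and $\E[L^{\mathrm{bulk}}_{(1,1)\to(N,N)}] = 4N$. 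Substituting $z = uN^{2/3}$, the mean of the left-hand side minus the right-hand side above equals $(4\sqrt{ku}-u^2/4)\,N^{1/3} + O(1)$, which for $u$ beyond some $k$-dependent threshold is $\leq -c\,u^2 N^{1/3}$.

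The required event is thus an upper-tail deviation of order $u^2 N^{1/3}$ against a negative mean drift of the same order. Combining the standard $\exp(-c\,t^{3/2})$ upper-tail for bulk LPP (Tracy--Widom soft edge) with the analogous upper-tail bound for the finite-$k$ boundary piece (via the LUE soft edge for fixed $k$, or via Gaussian concentration in the Brownian LPP scaling limit), the probability of each single-$z$ event is $\leq C\exp(-c\,u^3)$, and a union bound over $z$ preserves this rate, giving much more than the claimed $e^{-\kappa u}$. The main technical obstacle is making the deviation estimates uniform in $z$ across the full range $uN^{2/3} \leq z \leq N$: for macroscopic $z \geq \delta N$ the bulk piece degenerates and must be handled by a cruder exponential-in-$N$ large deviation bound (trivially absorbed), while in the moderate window one needs to track the $z$-dependence of the Wishart-type upper tail at fixed $k$. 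This uniform bookkeeping of tail estimates is the principal additional ingredient relative to the $k=\ell=1$ argument of \cite{BHA20}, and for $u$ below the threshold the desired bound is trivial by adjusting the constant $C$.
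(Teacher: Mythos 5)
Your high-level strategy matches the paper's: both proofs decompose the event into a boundary LPP piece plus a bulk LPP piece, compare against a reference lower bound for $L_N$, and estimate a negative drift of order $u^2 N^{1/3}$. Your mean/drift computation, including the threshold in $u$ beyond which the drift becomes negative, is correct.

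However, there is a genuine gap in the concluding union-bound step. You assert ``a union bound over $z$ preserves this rate,'' but this fails: the range of $z$ is $\{\lceil uN^{2/3}\rceil,\dots, N\}$, which has $\Or(N)$ elements, and your per-$z$ bound $\exp(-c u^3)$ (or even $\exp(-c r^3)$ when $z\in[rN^{2/3},(r+1)N^{2/3}]$) does not decay in $N$. A naive union bound therefore produces a factor diverging in $N$, destroying the claimed $N$-uniform estimate. The paper avoids this by taking a union bound over only $\Or(N^{1/3})$ blocks of width $N^{2/3}$ and bounding, for each block $r$, the probability that the \emph{maximum} over $x$ in that block of $L^{\mathrm{bdy}}_x + L^{\mathrm{bulk}}_x$ exceeds a threshold $\alpha = 4N - r^2N^{1/3}/20$. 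Controlling the maximum of the boundary piece over a macroscopic window is the key technical ingredient your sketch does not supply: the paper observes that $M_n := L_{(-\ell+1,1)\to(0,n)} - 2(n+\ell-1)$ is a submartingale, composes with a convex increasing exponential, and applies Doob's maximal inequality, with the moment generating function controlled by the thin-strip moderate-deviation bound of Theorem~\ref{thm:thindv}. The bulk piece's block maximum is handled by the imported Lemma~\ref{lem:b}. Without a maximal inequality of this type (or some other control on the supremum over a block), the union-bound step cannot be closed. As a smaller point, invoking ``the LUE soft edge for fixed $k$'' is not the right tool for the boundary piece at fixed width $k$; the relevant fluctuation scale is $\sqrt{z}$ (Brownian LPP regime, cf.\ Theorem~\ref{thm:strip}), and one needs a moderate-deviation bound like Theorem~\ref{thm:thindv}, not a soft-edge Tracy--Widom tail.
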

The proof of Proposition~\ref{p:exit} is deferred to the end of this section. We are now in a position to prove Theorem~\ref{thm:var}.

\begin{proof}[Proof of Theorem~\ref{thm:var}]
	As mentioned earlier, it suffices to prove~\eqref{E:var}. For $M,N>0$, we define
	\begin{equation}\label{E:dmn}
		D_{M,N} := \left\{ |Z^{\mathbf N}| \leq 2^{5/3} M N^{2/3} \right\}.
	\end{equation}
	By Proposition~\ref{p:exit}, there exist constants $C,\kappa>0$ such that
	\begin{equation}\label{E:eps}
		\Pb\left(D_{M,N}\right)>1-Ce^{-\kappa M}
	\end{equation}
	for all $N$ large enough. For $u\in\R$, we define
	\begin{equation}
		\begin{aligned}
			L_N^-(u)&:=\Id_{u\leq 0}\left(L_{(-\ell+1,1)\to(0,-2^{5/3}N^{2/3}u)}+L_{(1,-2^{5/3}N^{2/3}u)\to(N,N)}\right)\\
			L_N^+(u)&:=\Id_{u>0}\left(L_{(1,-k+1)\to(2^{5/3}N^{2/3}u,0)}+L_{(2^{5/3}N^{2/3}u,1)\to(N,N)}\right).
		\end{aligned}
	\end{equation}	
	Then by~\eqref{E:LN}, under the event $D_{M,N}$, we have
	\begin{equation}\label{E:lndecomp}
		L_N=\max_{|u|\leq M}\left(L_N^-(u), L_N^+(u)\right).
	\end{equation}
	Note that we have
	\begin{equation}\label{E:LNM}
		\begin{aligned}
			&\frac{L_N^-(u)-4N}{\left(16N\right)^{1/3}}=S_{N,1}(u)+S_{N,2}(u)
		\end{aligned}
	\end{equation}
	with
	\begin{equation}
		\begin{aligned}
			S_{N,1}(u)&:=\Id_{u\leq 0}\frac{L_{(-\ell+1,1)\to(0,-2^{5/3}N^{2/3}u)}-2^{8/3}N^{2/3}u}{\left(16N\right)^{1/3}}\\
			S_{N,2}(u)&:=\Id_{u\leq 0}\frac{L_{(1,-2^{5/3}N^{2/3}u)\to(N,N)}-(4N-2^{8/3}N^{2/3}u)}{\left(16N\right)^{1/3}}.
		\end{aligned}
	\end{equation}
	By Theorem~\ref{thm:strip}, as $N \to \infty$, the process $S_{N,1}(u)$ converges in distribution to $\Id_{u \leq 0}\sqrt{2}\wt{\mathcal{B}}_\ell(u)$. On the other hand, under the setting~\eqref{E:ThickBd}, the term
	\begin{equation}
		L_{(1, -2^{5/3}N^{2/3}u) \to (N,N)}-(4N - 2^{8/3}N^{2/3}u)
	\end{equation}
	can be interpreted as a last passage time without boundary conditions. Hence, as $N \to \infty$, the process $S_{N,2}(u)$ converges in distribution to $\Id_{u \leq 0} (\mathcal{A}_2(u) - u^2)$ (see, e.g.,~\cite{Jo03,PS02}). Plugging this into~\eqref{E:LNM}, we obtain
	\begin{equation}
		\lim_{N \to \infty} \frac{L_N^-(u) - 4N}{(16N)^{1/3}} = \Id_{\{u \leq 0\}} \left( \sqrt{2} \wt{\mathcal{B}}_\ell(-u) + \mathcal{A}_2(u) - u^2 \right),
	\end{equation}
	where the limit is understood as convergence in distribution of processes. Similarly, one also has
	\begin{equation}
		\lim_{N \to \infty} \frac{L_N^+(u) - 4N}{(16N)^{1/3}} = \Id_{\{u > 0\}} \left( \sqrt{2} \mathcal{B}_k(u) + \mathcal{A}_2(u) - u^2 \right).
	\end{equation}
	Since $\kappa$ in~\eqref{E:eps} is independent of $N$, we obtain
	\begin{equation}
		\left| \text{l.h.s.}~\eqref{E:lhs} - \Pb\left( \max_{|u| \leq M} \left( \mathbf{1}_{\{u \leq 0\}} \sqrt{2} \wt{\mathcal{B}}_\ell(-u) + \mathbf{1}_{\{u > 0\}} \sqrt{2} \mathcal{B}_k(u) + \mathcal{A}_2(u) - u^2 \right) \leq s \right) \right| \leq Ce^{-\kappa M}
	\end{equation}
	so that \eqref{E:var} then follows by letting $M \to \infty$.
\end{proof}

It remains to prove Proposition~\ref{p:exit}. We follow the method and notation developed in~\cite{BHA20}.
\begin{proof}[Proof of Proposition~\ref{p:exit}] For fixed $u,N>0$ with $N$ large enough, we have $\{|Z^{\mathbf N}|\geq uN^{2/3}\}=A\cup B$, where
	\begin{equation}
		\begin{aligned}
			A&:=\left\{\max_{x\geq uN^{2/3}}\left(L_{ (0,x)}+L_{(1,x)\to(N,N)}\right)= L_N\right\}\\
			B&:=\left\{\max_{x\geq uN^{2/3}}\left(L_{ (x,0)}+L_{(x,1)\to(N,N)}\right)= L_N\right\}
		\end{aligned}
	\end{equation}
	It suffices to bound the probabilities of the events $A$ and $B$. Since the procedures for the two events are similar, we focus on the event $A$. Note that
	\begin{equation}\label{E:sum}
		\Pb\left(A\right)\leq\sum_{r\geq u}^\infty\Pb\left(\max_{x\in [rN^{2/3},(r+1)N^{2/3}]}\left(L_{ (0,x)}+L_{(1,x)\to(N,N)}\right)\geq L_N\right).
	\end{equation}
	For each $r\geq u$, the summand above is bounded by
	\begin{equation}\label{E:split}
		\begin{aligned}
			& \Pb\left(\max_{x\in [rN^{2/3},(r+1)N^{2/3}]}\left(L_{ (0,x)}+L_{(1,x)\to(N,N)}\right)\geq \alpha\right)+\Pb\left(L_N\leq\alpha\right),
		\end{aligned}
	\end{equation}
	with $\alpha:=4N-r^2N^{1/3}/20$. By definition of LPP, we have $\mbox{$L_N\geq L_{(1,1)\to (N,N)}$}$. Under the setting of~\eqref{E:ThickBd}, the quantity $L_{(1,1) \to (N,N)}$ corresponds to last passage percolation without boundary conditions. Therefore, by Theorem~\ref{thm:deviation_ptp}, there exists a constant $\kappa > 0$ such that
	\begin{equation}\label{E:lm}
		\Pb\left(L_N\leq\alpha\right)\leq \Pb\left(L_{(1,1)\to (N,N)}\leq\alpha\right)\leq C  e^{-\kappa r^6}.
	\end{equation}
	Next, we bound the first term in \eqref{E:split}. We denote it by $P_1$. Note that
	\begin{equation}\label{E:rep}
		\begin{aligned}
			P_1&=\Pb\left(\max_{\gamma\in[r,r+1]}\left\{L_{ (0,\gamma N^{2/3})}+L_{(1,\gamma N^{2/3})\to(N,N)}-4N\right\}+ \tfrac{r^2N^{1/3}}{4}\geq \tfrac{r^2N^{1/3}}{5}\right)\\
			&\leq\Pb\left(\max_{\gamma\in[r,r+1]}\left\{L_{ (0,\gamma N^{2/3})}+L_{(1,\gamma N^{2/3})\to(N,N)}-4N+\tfrac{\gamma^2N^{1/3}}{4}\right\}\geq \tfrac{r^2N^{1/3}}{5}\right).
		\end{aligned}
	\end{equation} 	
	Note that for any $\gamma\in[r,r+1]$, we have
	\begin{equation}
		f(N-\gamma N^{2/3},N-1)=4N-2\gamma N^{2/3}-\tfrac{\gamma^2 N^{1/3}}{4}+\Or(1),
	\end{equation}
	where $f(m,n):=(\sqrt{m}+\sqrt{n})^2$. Thus, for $N$ large enough, we can bound the probability in the second line of~\eqref{E:rep} by
	\begin{equation}\label{E:la}
		\begin{aligned}
			&\Pb\left(\max_{\gamma\in[r,r+1]}\left\{L_{(1,\gamma N^{2/3})\to(N,N)}-f(N-\gamma N^{2/3},N-1)\right\}\geq\tfrac{r^2N^{1/3}}{10}\right)\\
			+ \,&\Pb\left(\max_{\gamma\in[r,r+1]}\left\{L_{ (0,\gamma N^{2/3})}-2\gamma N^{2/3}\right\}\geq\tfrac{r^2N^{1/3}}{10}\right)\\
		\end{aligned}
	\end{equation}
	
	\textit{First term in~\eqref{E:la}.} We denote the first term by $H_1$. By Lemma~\ref{lem:b}, there exists constant $\kappa$ independent of $r$ such that
	\begin{equation}\label{E:h1}
		H_1\leq e^{-\kappa |r|^3}.
	\end{equation}
	
	\textit{Second term in~\eqref{E:la}.} We denote the second term by $H_2$. Let $\gamma \in [r, r+1]$. First note that under the setting of~\eqref{E:ThickBd}, we have
	\begin{equation}
		L_{(0, \gamma N^{2/3})} = L_{(-\ell+1,1) \to (0, \gamma N^{2/3})}.
	\end{equation}
	Since $\ell$ is a fixed finite constant, for sufficiently large $N$, the second term in~\eqref{E:la} can be rewritten as follows
	\begin{equation}\label{E:ma}
		\mathbb{P}\left(
		\max_{\gamma \in [r, r+1]}
		\left\{
		L_{(-\ell+1,1) \to (0, \gamma N^{2/3})}
		- 2(\gamma N^{2/3} + \ell)
		\right\}
		\geq \tfrac{r^2 N^{1/3}}{10}
		\right).
	\end{equation}
	Note that the sequence $(M_n)_n$ is a submartingale, where
	\begin{equation}
		M_n := L_{(-\ell+1,1) \to (0,n)} - 2(n + \ell - 1).
	\end{equation}
	Indeed, by the definition of last passage percolation (LPP), we have
	\begin{equation}
		L_{(-\ell+1,1) \to (0,n+1)} \geq L_{(-\ell+1,1) \to (0,n)} + \omega_{0,n+1}.
	\end{equation}
	Together with the fact that $\E(\omega_{0,n+1}) = 2$, we obtain $\E(M_{n+1} \mid M_n) \geq M_n.$ Define now the function $g(x) := e^{\frac{x}{\sqrt{(r+1)\ell}}}$, which is convex and increasing. By Jensen's inequality, it follows that the sequence $(g(M_n))_n$ is also a submartingale. Applying Doob's maximal inequality, we obtain
	\begin{equation}
		\begin{aligned}
			&H_2=\Pb\left(\max_{\gamma \in [r, r+1]}M_{\gamma N^{2/3}}\geq \tfrac{r^2 N^{1/3}}{10}\right)
			\leq e^{-\tfrac{r^2}{10\sqrt{\left(r+1\right)\ell}}} \E\left(\exp\left(\tfrac{M_{(r+1)N^{2/3}}}{\sqrt{\left(r+1\right)\ell}N^{1/3}}\right)\right).
		\end{aligned}
	\end{equation}
	In Section~\ref{sec:emb}, we prove the boundedness of the above expectation by showing that the upper tail of
		$\tfrac{M_{(r+1)N^{2/3}}}{\sqrt{(r+1)\ell}\,N^{1/3}}$
		has (super-)exponential decay.
Hence, we have $H_2 \leq C e^{-Cr}$ for some constant $C > 0$. Plugging this and~\eqref{E:h1} into~\eqref{E:rep}, we obtain $P_1 \leq C e^{-Cr}$. Substituting this and~\eqref{E:lm} into~\eqref{E:split}, we conclude that $\Pb(A) \leq C e^{-Cr}$. This completes the proof.			
\end{proof}

		\appendix
        \section{Classical expression for the Baik--Rains distribution}\label{AppBR}
        The Baik--Rains distribution can be expressed in terms of solutions of Painlev\'e II equation~\cite{BR00} (reported also in Appendix~A of~\cite{FS05a}) or in terms of kernels and Fredholm determinants, see~\cite{FS05a}. For completeness we report this last expression here. Let
		\begin{equation}
			K_{\Ai, s}(x, y)=\int_{\R_{+}} \dx\lambda \Ai(\lambda+x+s) \Ai(\lambda+y+s) .
		\end{equation}
		and
	\begin{equation}
		\begin{aligned}
			& \widehat{\Phi}_{\tau, s}(x)=\int_{\R_{-}} \dx z e^{\tau z} K_{\Ai, s}(z, x) e^{\tau s}, \\
			& \widehat{\Psi}_{\tau, s}(y)=\int_{\R_{-}} \dx z e^{\tau z} \Ai(y+z+s), \\
			& \rho_s(x, y)=\left(\Id-P_0 K_{\Ai, s} P_0\right)^{-1}(x, y),
		\end{aligned}
	\end{equation}
	and the scaling function $g$ by	
	\begin{equation}
		g(s, \tau)=e^{-\frac{1}{3} \tau^3} \left[\int_{\R_{\le 0}^2} \dx x \dx y e^{\tau(x+y)} \Ai(x+y+s)+\int_{\R_{\ge 0}^2} \dx x \dx y \widehat{\Phi}_{\tau, s}(x) \rho_s(x, y) \widehat{\Psi}_{\tau, s}(y)\right].
	\end{equation}
	Then we have
	\begin{equation}
		F_\tau(s)=\frac{\partial}{\partial s}\left(F_{{\rm GUE}}\left(s+\tau^2\right) g\left(s+\tau^2, \tau\right)\right) .
	\end{equation}
As the Baik--Rains distribution for $\tau$ and $-\tau$ is the same, the above expressions are the ones for $\tau>0$, so that all the integrals converges. For $\tau=0$ one has to modify the integrals on $\R_{\le 0}$ to $\R_{\ge 0}$ using for instance the identity $\int_\R \Ai(x)=1$. The expression which is obtained can be found for instance in~\cite{BFP09} (for the $m=1$ case).

		\section{Some known bounds on LPP}
		In this section, we collect some known results regarding LPP. Since we only consider exponential random variables, we state the results directly in the exponential setting, without presenting the more general version for arbitrary weights.
		
		Our setup is as follows: for $i,j \in \Z$, let $\hat\omega_{i,j}~\Exp(1)$ be i.i.d.~exponential random variables, and denote by $\hat L_{m,n}$ the last passage time from $(1,1)$ to $(m,n)$ under this setting. For any fixed $p \in \Z_{>0}$, define the process
		\begin{equation}
			\mathcal{L}_N(u) = \frac{\hat L_{p,uN} - uN}{\sqrt{N}}.
		\end{equation}

		\begin{thm}[Theorem 3.2 of\cite{GW91}]\label{thm:strip}
			As $N \to \infty$, the process $\mathcal{L}_N(u)$ converges in distribution to the Brownian LPP process $\mathcal{B}_p(u)$ defined in~\eqref{E:blpp}.
		\end{thm}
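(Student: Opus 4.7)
The plan is to derive this convergence directly from Donsker's invariance principle combined with a continuous-mapping argument, exploiting that $p$ is fixed so the rectangle $\{1,\ldots,p\}\times\{1,\ldots,\lfloor uN\rfloor\}$ is thin in the first coordinate. First I would fix $T>0$, restrict to $u\in[0,T]$, and write the variational form of LPP: any up-right path from $(1,1)$ to $(p,\lfloor uN\rfloor)$ is encoded by its $p-1$ row-jump columns $1\le k_1\le\cdots\le k_{p-1}\le \lfloor uN\rfloor$. Setting $k_0:=1$, $k_p:=\lfloor uN\rfloor$ and $S_r(n):=\sum_{j=1}^n \hat\omega_{r,j}$, one has
\begin{equation}
\hat L_{p,\lfloor uN\rfloor} = \max_{1\le k_1\le\cdots\le k_{p-1}\le \lfloor uN\rfloor}\sum_{r=1}^{p}\bigl[S_r(k_r)-S_r(k_{r-1}-1)\bigr].
\end{equation}
Subtracting $\lfloor uN\rfloor$ and dividing by $\sqrt{N}$ converts each row-sum increment into $W_N^{(r)}(s_r)-W_N^{(r)}(s_{r-1})$ up to $o(1)$ errors, where $s_r:=k_r/N$ and
\begin{equation}
W_N^{(r)}(t):=\frac{S_r(\lfloor tN\rfloor)-tN}{\sqrt N},\qquad t\in[0,T].
\end{equation}

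Next, Donsker's invariance principle applied row by row gives $W_N^{(r)}\Rightarrow B_r$ in $D([0,T])$ with the Skorokhod topology (the $\hat\omega_{r,j}$ being i.i.d.\ with mean and variance equal to $1$), and by independence of the rows this upgrades to joint convergence $(W_N^{(1)},\ldots,W_N^{(p)})\Rightarrow(B_1,\ldots,B_p)$ to independent standard Brownian motions. The key observation is that the functional
\begin{equation}
\Phi_u(f_1,\ldots,f_p):=\sup_{0\le s_1\le\cdots\le s_{p-1}\le u}\sum_{r=1}^{p}[f_r(s_r)-f_r(s_{r-1})],\qquad s_0:=0,\ s_p:=u,
\end{equation}
is Lipschitz on $C([0,T])^p$ with respect to the uniform norm (with constant $2p$), and $(B_r)_{r\le p}$ is a.s.\ continuous. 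The continuous mapping theorem then yields $\mathcal L_N(u)\Rightarrow \mathcal B_p(u)$ at each fixed $u$. Process convergence in $u$ follows from the same argument applied to the continuous map $(f_1,\ldots,f_p)\mapsto[u\mapsto\Phi_u(f_1,\ldots,f_p)]$ sending $C([0,T])^p$ into $C([0,T])$.

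The main obstacle is bridging the discrete and continuous optimizations: at level $N$ the max runs over lattice jump points $k_r\in\Z$, whereas $\Phi_u$ optimizes over the continuum $s_r\in[0,u]$. This is handled by noting that on intervals of length $1/N$ the oscillation of $W_N^{(r)}$ is bounded by a single weight $N^{-1/2}\hat\omega_{r,\cdot}$, and more generally the modulus of continuity of $W_N^{(r)}$ is uniformly controlled via tightness of the Donsker laws in $D([0,T])$. Consequently the difference between the lattice max and the continuum supremum is $o(1)$ in probability. All other steps are routine consequences of Donsker's theorem and continuous mapping; no integrable structure of the exponential distribution is used beyond having finite mean and variance, so the same proof works for any i.i.d.\ weights with unit mean and variance.
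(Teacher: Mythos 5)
Your proof is correct. The paper does not reprove this statement — it simply cites Theorem~3.2 of Glynn--Whitt~\cite{GW91} — and your argument (the last-passage max-plus representation over row jump points, the row-wise Donsker FCLT with joint convergence from independence, the continuous mapping theorem applied to the Lipschitz sup functional $\Phi_u$, and a modulus-of-continuity bound to close the discrete/continuum gap) is essentially the standard proof given in that reference.
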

		
		Define now $f(m,n):=(\sqrt{m}+\sqrt{n})^2$ which is the law of large number approximation of the LPP to $(m,n)$.
		\begin{thm}[Theorem 4.1 of~\cite{BGZ19}; Theorem 2 of~\cite{LR09}]\label{thm:deviation_ptp}
			For each $\psi>1$ There exists $C, c>0$ depending on $\psi$ such that for all $m, n, r \geq 1$ with $\psi^{-1}<\frac{m}{n}<\psi$ and all $x>0$ we have the following:
			\begin{align}
				&\Pb\left(\hat L_{m, n}- f(m,n)\geq x n^{1 / 3}\right) \leq C e^{-c \min \left\{x^{3 / 2}, x n^{1 / 3}\right\}}\\
				&\Pb\left(\hat L_{m, n}-f(m,n) \leq-x n^{1 / 3}\right) \leq C e^{-c x^3}.
			\end{align}
		\end{thm}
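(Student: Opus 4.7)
The plan is to reduce to the largest eigenvalue of the Laguerre Unitary Ensemble (LUE) via Johansson's identity, and then perform a steepest-descent analysis of the resulting Fredholm determinant, in the spirit of the asymptotics of Section~\ref{sectAsymptotics}. Specifically, $\hat L_{m,n}$ has the distribution of the largest eigenvalue of an $m\times m$ LUE matrix with parameter $n-m$, so
\begin{equation*}
\Pb(\hat L_{m,n}\leq s) = \det(\Id - P_s K_{m,n} P_s)_{L^2(\R)},
\end{equation*}
where $K_{m,n}$ is the boundary-free ($k=\ell=0$) specialization of the kernel in Theorem~\ref{thm:expKernel}, admitting a double contour integral representation with phase $f(v) = -v - \tfrac{1}{4}\log(1-2v) + \tfrac{1}{4}\log(1+2v)$. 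For $m/n$ in the compact range $(\psi^{-1},\psi)$, the relevant saddle of $f$ stays in a compact subset of $(-1/2,1/2)$ and the steep-descent contours can be chosen uniformly in this range.

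For the upper tail I would combine the first-moment bound $\Pb(\hat L_{m,n}\geq s) \leq \mathrm{tr}(P_s K_{m,n} P_s) = \int_s^\infty K_{m,n}(x,x)\,dx$ with a saddle-point analysis of $K_{m,n}(x,x)$. At the Tracy--Widom scale $s = f(m,n) + x n^{1/3}$ with $x = O(1)$, the contour deformation to the standard Airy paths yields the Airy kernel as the leading asymptotic and thus the $\exp(-cx^{3/2})$ decay characteristic of the GUE right tail. For $x$ of larger order (the large-deviation regime), the relevant saddle of $f$ moves to a non-degenerate location and a Laplace-type estimate on the deformed contour gives a linear decay $\exp(-c x n^{1/3})$. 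The two regimes combine into the claimed minimum in the exponent.

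For the lower tail, the cubic rate $\exp(-c x^3)$ is the signature of the Coulomb-gas large deviation for the LUE: forcing all $m$ eigenvalues below the right edge of the limiting Marchenko--Pastur law is costly, with rate cubic in $x$ near the edge. I would extract this either from the expansion of the Marchenko--Pastur variational rate function near its right edge, or via the Dumitriu--Edelman tridiagonal model combined with concentration of the top eigenvalue, or directly through the Fredholm expansion
\begin{equation*}
-\log\det(\Id - P_s K_{m,n} P_s) = \sum_{j\geq 1} \tfrac{1}{j}\,\mathrm{tr}\bigl((P_s K_{m,n} P_s)^j\bigr),
\end{equation*}
combined with contour integral estimates on the traces.

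The main obstacle is uniformity of the constants $C,c$ in $m,n$ subject to $\psi^{-1} < m/n < \psi$, and, for the upper tail, the careful matching of the $x^{3/2}$ and $x n^{1/3}$ regimes across the crossover $x \asymp n^{2/3}$. The contour deformations must avoid the pole at $v = 1/2$ and preserve steepness uniformly across the admissible range of $m/n$; for the lower tail, obtaining the correct cubic constant requires a delicate expansion of the rate function near its right edge. These uniformity issues, rather than the main steepest-descent computations (which follow established templates), are the real technical content of the estimates.
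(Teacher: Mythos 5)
This statement is not proved in the paper; it is cited directly from Theorem~4.1 of~\cite{BGZ19} and Theorem~2 of~\cite{LR09}. There is therefore no in-paper argument to compare against, and your sketch has to be measured against those cited works.

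Your upper-tail plan is sound in outline: the counting-function bound
$\Pb(\hat L_{m,n}>s)\leq\int_s^\infty K_{m,n}(x,x)\,\dx x$ is valid, and a saddle-point estimate of the kernel diagonal does produce the $e^{-cx^{3/2}}$ rate at the Tracy--Widom scale and the linear rate $e^{-cxn^{1/3}}$ once the saddle detaches from the degenerate critical point; you correctly flag the crossover at $x\asymp n^{2/3}$ and the need for uniformity in $m/n$. This is broadly the route taken for the right tail in the Ledoux--Rider framework.

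The lower-tail sketch, by contrast, is a menu rather than a plan, and the option you tie most closely to the paper's machinery is the one that does not readily work. Writing $-\log\det(\Id-P_sK_{m,n}P_s)=\sum_{j\geq 1}\tfrac1j\,\mathrm{tr}\bigl((P_sK_{m,n}P_s)^j\bigr)$ and deducing an upper bound on $\Pb(\hat L_{m,n}\leq s)$ requires a \emph{lower} bound on this sum of traces, uniformly in $j$ and in the aspect ratio, which is a substantially harder problem than the single-trace estimate used for the upper tail; you do not indicate how to do this, and it is not how either cited reference proceeds. The route that actually yields the $e^{-cx^3}$ lower tail in~\cite{LR09} is the Dumitriu--Edelman tridiagonal $\beta$-Laguerre model combined with a Rayleigh-quotient lower bound on $\lambda_{\max}$ and sub-Gaussian/chi concentration of the matrix entries; you list this as one of three possibilities but give it no development. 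The Coulomb-gas rate-function expansion you also mention predicts the correct $x^3$ scaling near the edge, but converting it into a non-asymptotic bound with constants uniform in $\psi^{-1}<m/n<\psi$ would itself be a project of comparable size to the result. In short: the upper-tail outline is essentially correct and in the spirit of the cited works, but the lower-tail outline is missing the actual mechanism (the tridiagonal model) and overcommits to a Fredholm-expansion route that has a genuine unaddressed obstruction.
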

		
		 \begin{lem}[Lemma 4.6 of~\cite{BHA20}]\label{lem:b}
		 	For any fixed $r \in \R$, there exist constants $N_0, C, c > 0$ such that for all $N > N_0$, we have
		 	\begin{equation}
		 		\Pb\left(
		 		\max_{x \in  [ rN^{2/3}, (r+1)N^{2/3}  ]}
		 		\left\{ L_{(x,1) \to (N,N)} - f(N - x, N - 1) \right\}
		 		\geq \tfrac{r^2 N^{1/3}}{16}
		 		\right)
		 		\leq C e^{-c |r|^3}.
		 	\end{equation}
		 \end{lem}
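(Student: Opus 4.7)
}
Write $\phi(x) := L_{(x, 1) \to (N, N)} - f(N - x, N - 1)$ and $I := [rN^{2/3}, (r+1)N^{2/3}]$. The strategy is to combine a pointwise upper-tail estimate for $\phi$ with a modulus-of-continuity bound on $I$ at scale $N^{2/3}$. Throughout, $r$ is fixed, so $N - x \asymp N$ for all $x \in I$, and we focus on the regime of large positive $r$ where the bound is non-trivial.

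\paragraph{Step 1 (pointwise tail).}
At any reference point $x_0 \in I$, I would apply Theorem~\ref{thm:deviation_ptp} with $m = N - x_0$, $n = N - 1$, and the threshold $r^2 N^{1/3}/32 = (r^2/32 + o(1)) (N - x_0)^{1/3}$. Because $r$ is bounded and $N$ is large, the minimum appearing in Theorem~\ref{thm:deviation_ptp} is attained by the $x^{3/2}$ branch, and one obtains
\begin{equation*}
\Pb\bigl(\phi(x_0) \geq r^2 N^{1/3}/32\bigr) \leq C \exp(-c r^3)
\end{equation*}
with constants $c, C > 0$ that are uniform in $r$ and $N$.

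\paragraph{Step 2 (discretization).}
Partition $I$ into $J$ consecutive sub-intervals $[x_j, x_{j+1}]$ of common length $N^{2/3}/J$, where $J$ is a constant to be chosen later. A union bound over Step~1 applied at each of the $J+1$ grid points yields
\begin{equation*}
\Pb\bigl(\max_{0 \le j \le J}\phi(x_j) \geq r^2 N^{1/3}/32\bigr) \leq (J + 1)\, C \exp(-c r^3).
\end{equation*}

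\paragraph{Step 3 (modulus of continuity).}
For $x$ in a single sub-interval $[x_j, x_{j+1}]$, I need to bound the oscillation $\phi(x) - \phi(x_j)$ by $r^2 N^{1/3}/32$, except on an event of probability $\leq C e^{-c r^3}$. This is the crux of the argument. A naive comparison via row-$1$ LPP, i.e., $L_{(x, 1) \to (N, N)} \le L_{(x_j, 1) \to (N, N)} - L_{(x_j, 1) \to (x, 1)}$, loses a deterministic contribution of size $\sim (x - x_j) \sim N^{2/3}/J$, which dominates the target threshold $r^2 N^{1/3}$ unless one extracts cancellation with the geometric drift $f(N-x_j, N-1) - f(N-x, N-1) \approx 2(x - x_j)$. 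The correct strategy is therefore a KPZ-scale concentration estimate: the centered profile $\phi$ has transversal correlation length $N^{2/3}$ and its local oscillation across a sub-interval of length $\delta N^{2/3}$ is of order $\delta^{1/2} N^{1/3}$, with stretched-exponential tails. Such bounds can be produced via (i) a Brownian-Gibbs/line-ensemble comparison in the spirit of Hammond, or (ii) an exit-time/Busemann analysis as used in \cite{BGZ19, BHA20, EJS23}, or (iii) an iterated chaining argument that repeatedly applies Theorem~\ref{thm:deviation_ptp} at geometrically refined scales and recombines the pieces by planar monotonicity.

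\paragraph{Conclusion.}
Choosing $J$ large enough and combining Steps~2 and~3 by a final union bound yields the desired estimate with constants $c, C > 0$ independent of $N$ (and uniform for $r$ in a fixed compact set; for very large $r$ the decay factor $e^{-c r^3}$ absorbs any polynomial loss in $J$). The main obstacle is Step~3: local increments of $L_{(\cdot, 1) \to (N, N)}$ and of $f(N - \cdot, N - 1)$ are each of the same order as the deterministic drift $2(x - x_j)$, so the sharp $N^{1/3}$-scale residual can only be isolated by tracking KPZ-level correlations rather than by row-by-row monotonicity.
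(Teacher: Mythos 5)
The paper does not prove Lemma~\ref{lem:b} at all: it is imported verbatim from Lemma~4.6 of~\cite{BHA20} and used as a black box in the proof of Proposition~\ref{p:exit}. So there is no in-paper proof to compare against; what you have written is a proposal for reproving a cited input.

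As a proof, your proposal has a genuine gap, and you have correctly located it yourself: Step~3 is not an argument, it is a pointer to three families of techniques (Brownian--Gibbs, exit-time/Busemann, iterated chaining), none of which is carried out. Your observation that the naive row-one superadditivity $L_{(x,1)\to(N,N)} \leq L_{(x_j,1)\to(N,N)} - L_{(x_j,1)\to(x,1)}$ fails is correct and worth spelling out: the centering difference satisfies $f(N-x_j,N-1) - f(N-x,N-1) = 2(x - x_j) + O\bigl((x-x_j)^2 N^{-1}\bigr)$, while $L_{(x_j,1)\to(x,1)}$ is a sum of $\sim (x - x_j)$ i.i.d.\ $\Exp(1)$ weights with mean $\sim (x-x_j)$, so the comparison only yields $\phi(x) - \phi(x_j) \lesssim (x - x_j) + O(\sqrt{x-x_j})$, which is of order $N^{2/3}/J$, far above the $N^{1/3}$ target. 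So the one step that carries all the content of the lemma is left unproved, and the remaining two steps (pointwise tail via Theorem~\ref{thm:deviation_ptp} and a union bound over a bounded grid) are routine. Until you supply the transversal modulus-of-continuity estimate at scale $N^{2/3}$ with stretched-exponential tails---which is precisely what~\cite{BHA20} establishes via stationary coupling and exit-point control, and what~\cite{BGZ19} does via a multi-scale/chaining argument---your proposal cannot be considered a proof. One further caveat: if the number of grid points $J$ must grow as $r \to 0$ to make the chaining close, the resulting constant $C = (J+1)C'$ would depend on $r$; the lemma as stated allows this, but the way the lemma is invoked in the proof of Proposition~\ref{p:exit} (constants independent of $r$ so the sum over $r \geq u$ converges) requires uniformity, so you would also need to check that your chaining yields constants uniform in $r$.
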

		
		 \section{Exponential moment bound}\label{sec:emb}
         In Lemma~\ref{lemUpperTailThinRectangle} we provide a bound which implies that
		 \begin{equation}\label{11}
		 	\mathbb{E}\left(\exp \left(\frac{M_{(r+1) N^{2 / 3}}}{\sqrt{(r+1) \ell} N^{1 / 3}}\right)\right)<C,
		 \end{equation}
		 for some finite constant $C > 0$ (independent of $N$ and $r$) where
		 \begin{equation}
		 	M_n:=L_{(-\ell+1,1) \rightarrow(0, n)}-2(n+\ell-1) .
		 \end{equation}
		 By definition of our model, all the weights on the points along the right-up paths connecting $(-\ell+1,1)$ and $(0, n)$ are distributed as $\operatorname{Exp}\left(\frac{1}{2}\right)$. This motivates the following lemma.
		 \begin{lem}\label{lemUpperTailThinRectangle}
		 	Consider a last passage percolation model on $\mathbb{Z}^2$ in which the weights $\omega_p \sim \mathrm{Exp}(\tfrac12)$ for all $p \in \mathbb{Z}^2$. For $p_1,p_2 \in \mathbb{Z}^2$, we denote by $L_{p_1,p_2}$ the corresponding last passage time.	
		 	For fixed $\ell\in\Z_{>0}$, there exists a constant $C$ such that, for all $r,S\in \mathbb{R}_+$ and $N$ sufficiently large, it holds
		 	\begin{equation}\label{c3}
		 		\mathbb{P}\!\left(
		 		L_{(1,1),\,((r+1)N^{2/3},\,\ell)}
		 		> 2(r+1)N^{2/3} + S\sqrt{r+1}\,N^{1/3}
		 		\right)
		 		\leq
		 		C \exp\!\left(-\tfrac{1}{48}\min\{S^2,\, S N^{1/6}\}\right).
		 	\end{equation}
		 	
		 \end{lem}
		 We then have
		 \begin{equation}
		 	M_n \stackrel{d}{=} L_{(1,1),(n,\ell)} - 2(n+\ell-1).
		 \end{equation}
		 Hence, to prove~\eqref{11}, it suffices to establish an exponential decay for
		 \begin{equation}
		 	\mathbb{P}\!\left(
		 	L_{(1,1),\,((r+1)N^{2/3},\,\ell)}
		 	> 2(r+1)N^{2/3} + 2(\ell-1)
		 	+ S\sqrt{(r+1)\ell}\,N^{1/3}
		 	\right)
		 \end{equation}
		 for $S \geq S_0 \geq 0$.
		 Note that the term $2(\ell-1)$ is irrelevant and can always be absorbed by shifting $S$ by an amount of order $\mathcal{O}(N^{-1/3})$.
		 Moreover, once such a bound is obtained for some fixed $S_0$, the same estimate holds for all $S_0 \geq 0$ after adjusting the prefactor.
		 Consequently,~\eqref{c3} indeed implies~\eqref{11}.

		 \begin{proof}[Proof of Lemma~\ref{lemUpperTailThinRectangle}]
		 	For $n,\ell\in\Z$, we define
		 	\begin{equation}
		 		L:=L_{(1,1),(n,\ell)},
		 	\end{equation}
		 	whose distribution is given by
		 	\begin{equation}
		 		\Pb(L\leq a)=\det(\Id-K)_{L^2((a,\infty))},
		 	\end{equation}
		 	where the kernel\footnote{The kernel can be obtained by a few manipulations from the work~\cite{Jo00b}, or more easily from Theorem 3 of~\cite{BP07}.} is given by
		 	\begin{equation}\label{1.5}
		 		K(x,y)=\frac{1}{(2\pi\I)^2}\oint_{\Gamma_0}\dx w \oint_{\Gamma_{1/2}}\dx z \frac{(1-2 w)^n}{(1-2 z)^n}\frac{z^\ell}{w^\ell} \frac{e^{w y-z x}}{w-z}.
		 	\end{equation}
		 	We want to have a bound for
		 	\begin{equation}
		 		n=(r+1)N^{2/3},\quad a=2n+S \sqrt{r+1}N^{1/3}
		 	\end{equation}
		 	for $S\geq S_0\geq 0$.
		 	Since $\ell$ is fixed and we consider large $N$, we can assume that $n=(r+1)N^{2/3}>\ell$. Also, for $n>\ell$ and $x\geq 0$ there is no pole at $\infty$, so that the contour $\Gamma_{1/2}$ for variable $z$ in \eqref{1.5} can be deformed to be $\{R-\I y,y\in\R\}$ with $R>|w|$.
		 	
		 	Denote $\e=(r+1)^{-1/2}N^{-1/3}$, so that $n=\e^{-2}$, and consider the following change of variables in the kernel
		 	\begin{equation}
		 		w=\e W,\quad z=\e Z,\quad x=2\e^{-2}+\xi \e^{-1},\quad y=2\e^{-2}+\zeta \e^{-1}.
		 	\end{equation}
            We consider the rescaled and conjugated kernel $K^{\rm resc,\e}(\xi,\zeta):=\e^{-1}e^{(\xi-\zeta)/16} K(x,y)$ with the above change of variables. After the change of variables $(w,z)\mapsto (\e W,\e Z)$ we obtain
		 	\begin{equation}\label{re}
		 		K^{\rm resc,\e}(\xi,\zeta)=\frac{e^{(\xi-\zeta)/16}}{(2\pi\I)^2}\oint_{\Gamma_0} \dx W \int_{\rho-\I \R} \dx Z \frac{Z^\ell}{W^\ell} \frac{e^{\zeta W-\xi Z}}{W-Z} \frac{e^{\Phi_{\e}(W)}}{e^{\Phi_{\e}(Z)}},
		 	\end{equation}
		 	where
		 	\begin{equation}
		 		\Phi_\e(Z)=\e^{-2}\ln(1-2\e Z)+2\e^{-1}Z.
		 	\end{equation}
		 	Since we only need to get the bound for large $\xi,\zeta$, we assume without loss of generality that $\xi,\zeta\geq 1$. We choose the following paths for the integrals:
		 	\begin{equation}\label{wcontour}
		 		W=\{\tfrac{1}{16} e^{\I\phi},\phi\in [-\pi,\pi)\}\textrm{ and }Z=\{\rho(\xi)-\I y,y\in\R\},
		 	\end{equation}
		 	with $\rho(\xi)=\tfrac16\xi\Id_{1\leq \xi\leq \e^{-1/2}}+\tfrac16 \e^{-1/2}\Id_{\xi\geq \e^{-1/2}}$. Note that by our choice, $\rho(\xi)$ is at most $\tfrac16\e^{-1/2}$.
		 	
		 	We first show that the integral in $W$ is bounded by a constant uniformly for all $Z\in\rho-\I\R.$ Applying Taylor expansion, we obtain
		 	\begin{equation}\label{1.12}
		 		\Phi_\e(W)=-2 W^2+\Or(\e W^3).
		 	\end{equation}
		 	Consider now an arbitrary $Z\in\rho-\I \R$, then we have
		 	\begin{equation}
		 		\begin{aligned}
		 			&\left|\oint_{\Gamma_0} \dx W \frac{e^{\zeta W+\Phi_\varepsilon(W)}}{W^\ell(W-Z)}\right|\leq \oint_{\Gamma_0} \dx W \frac{e^{\zeta \Re(W)+\Re(\Phi_\varepsilon(W))}}{\left|W^\ell(W-Z)\right|}
		 		\end{aligned}
		 	\end{equation}
		 	By \eqref{wcontour}, we have $e^{\zeta \Re(W)}=e^{\zeta/16}$, $|W^{-\ell}|\leq 16^\ell$  and $1/|W-Z|$ is bounded by a constant independent of $\e$. Hence, we have
		 	\begin{equation}\label{1.14}
		 		\left|\oint_{\Gamma_0} \dx W \frac{e^{\zeta W+\Phi_\varepsilon(W)}}{W^\ell(W-Z)}\right|\leq C' e^{\zeta/16}\oint_{\Gamma_0} \dx We^{\Re(\Phi_\varepsilon(W))}\stackrel{\eqref{1.12}}{\leq } Ce^{\zeta/16},\\
		 	\end{equation}
		 	for some constants $C,C'$ independent of $\e$.
		 	
		 	Finally we need to bound the integrand in the $Z=\rho-\I y$ variable. We consider the cases of small and large values of $|y|$ separately.
		 	
		 	(a) For $|y|\leq \e^{-1/2}$, we have $|\e Z|=\Or(\sqrt{\e})$ since both $y$ and $\rho$ are $\Or(\e^{-1/2})$. Applying Taylor expansion, we obtain
		 	\begin{equation}
		 		-\Phi_\e(Z)=2 Z^2(1+\Or(\e Z)),\textrm{ so that }\Re(-\Phi_\e(Z))\leq \tfrac52 (\rho^2-y^2)
		 	\end{equation}
		 	uniformly for all $\e>0$ small enough.
		 	
		 	(b) For $|y|>\e^{-1/2}$, we have
		 	\begin{equation}\label{1.16}
		 		\Re(-\Phi_\e(Z))=-\e^{-2}\ln((1-2\e\rho))-2\e^{-1}\rho-\tfrac12\e^{-2}\ln\left(1+\frac{4\e^2 y^2}{(1-2\e\rho)^2}\right)
		 	\end{equation}
		 	As $\rho\e=\Or(\sqrt{\e})$, by Taylor expansion in the first term we get
		 	\begin{equation}\label{1.17}
		 		\Re(-\Phi_\e(Z))\leq 2\rho^2(1+\Or(\e\rho))-\tfrac12\e^{-2}\ln(1+4\e^2 y^2)\leq \frac52\rho^2-\tfrac12\e^{-2}\ln(1+4\e^2 y^2)
		 	\end{equation}
		 	uniformly for all $\e>0$ small enough. Plugging $|Z|^\ell\leq 2^{\ell/2} (\rho^\ell+y^\ell)$, \eqref{1.14} and \eqref{1.17} into \eqref{re}, then for some constants $C_1,C_2>0$ we have
		 	\begin{equation}
		 		\begin{aligned}
		 			|K^{\rm resc,\e}(\xi,\zeta)| &\leq C_1 e^{\xi(1/16-\rho)} e^{\frac52 \rho^2} \int_\R \dx y (\rho^\ell+y^\ell) \left(e^{-y^2}\Id_{|y|\leq \e^{-1/2}}+\frac{\Id_{|y|>\e^{-1/2}}}{(1+4\e^2 y^2)^{\e^{-2}/2}}\right)\\
		 			&\leq  C_2 e^{3\rho^2+\xi/16-\rho\xi}
		 		\end{aligned}
		 	\end{equation}
		 	for all $\e>0$ small enough.
		 	
		 	For $1\leq\xi\leq\e^{-1/2}$, we have $\rho=\tfrac16\xi$ and therefore
		 	\begin{equation}
		 		3 \rho^2+\frac{1}{16} \xi-\rho \xi=-\frac{1}{12} \xi^2+\frac{1}{16} \xi\leq -\frac{1}{48} \xi^2.
		 	\end{equation}
		 	For $\xi\geq\e^{-1/2}$, we have $\rho=\tfrac16\e^{-1/2}$, which then implies
		 	\begin{equation}
		 		3 \rho^2+\frac{1}{16} \xi-\rho \xi=\frac{\e}{12}+\xi\left(\frac{1}{16}-\frac{\e^{-1/2}}{6}\right)\leq -\frac{1}{12} \xi\e^{-1/2}
		 	\end{equation}
		 	for all $\xi\geq\e^{-1/2}\geq 3$. To summarize, for all $\xi\geq 1$ (and $\e>0$ small enough) we get
		 	\begin{equation}
		 		3\rho^2+\tfrac1{16}\xi-\rho\xi\leq -\tfrac1{48}\xi^2\Id_{1\leq \xi\leq \e^{-1/2}}-\tfrac1{12}\xi\e^{-1/2}\Id_{\xi>\e^{-1/2}}=:\Psi_\e(\xi).
		 	\end{equation}
		 	
		 	Thus we have for some constant $C_3>0$
		 	\begin{equation}
		 		\begin{aligned}
		 			&\Pb(L_{(1,1),((r+1)N^{2/3},\ell)}>2(r+1)N^{2/3}+S \sqrt{r+1}N^{1/3})\\
		 			&=\sum_{m\geq 1} \frac{(-1)^{m-1}}{m!} \int_{S}^\infty \dx \xi_1\cdots\int_S^\infty \dx \xi_m \det(K^{\rm resc,\e}(\xi_i,\xi_j))_{1\leq i,j\leq m}\\
		 			&\leq \sum_{m\geq 1} \frac{C_2}{m!} \int_S^\infty \dx \xi_1\cdots\int_S^\infty \dx \xi_m\, m^{m/2}\prod_{k=1}^m e^{\Psi_\e(\xi_k)} \\
		 			&\leq C_3 e^{\Psi_\e(S)}=C_3 e^{-\tfrac1{48}S^2}\Id_{1\leq S\leq \e^{-1/2}} + C_3 e^{-\tfrac1{12}S\e^{-1/2}} \Id_{S>\e^{-1/2}}
		 		\end{aligned}
		 	\end{equation}
		 	where we used Hadamard bound that says that for a $m\times m$ matrix $A$ with $|A_{i,j}|\leq 1$, $|\det(A)|\leq m^{m/2}$. Replacing $\e^{-1}=\sqrt{r+1}N^{1/3}$ and considering that $r+1\geq 1$, we get
		 	\begin{equation}
		 		\Pb(L_{(1,1),((r+1)N^{2/3},\ell)}>2(r+1)N^{2/3}+S \sqrt{r+1}N^{1/3})\leq C_3 e^{-\tfrac1{48}\min\{S^2,S N^{1/6}\}}
		 	\end{equation}
		 \end{proof}
		
\section{Well-definedness of the Fredholm determinant}\label{Sec:Well}
Let us show that the Fredholm determinant in Theorem~\ref{thm:expKernel} is well defined by considering a conjugated kernel and proving that each entry of the kernel exhibits exponential decay, uniformly for all $\bma,\bmb>-\tfrac12+\e$ for an arbitrarily small $\e>0$. We define
\begin{equation}
	\tilde{\mathbf{R}} := \frac{\varepsilon}{2} - \min\Bigl\{0, \min_{j\in\llbracket k\rrbracket} \beta_j, \min_{i\in\llbracket \ell\rrbracket} \alpha_i \Bigr\}.
\end{equation}
Since $\bma,\bmb > -\frac{1}{2} + \varepsilon$, we have
\begin{equation}
	\tilde{\mathbf{R}} \le \frac{\varepsilon}{2} + \frac{1}{2} - \varepsilon = \frac{1}{2} - \frac{\varepsilon}{2}.
\end{equation}
Moreover, by definition,
\begin{equation}
	\begin{aligned}
		\tilde{\mathbf{R}} - \max_{j\in\llbracket k\rrbracket} \{-\beta_j\}
		&= \tilde{\mathbf{R}} + \min_{j\in\llbracket k\rrbracket} \beta_j \ge \frac{\varepsilon}{2},\\
		\tilde{\mathbf{R}} + \min_{i\in\llbracket \ell\rrbracket} \alpha_i
		&\ge \frac{\varepsilon}{2}.
	\end{aligned}
\end{equation}

Hence, we can choose the contours $\Gamma_{-1/2},\Gamma_{-1/2,-\bmb},\Gamma_{1/2},\Gamma_{\bma}$, and $\Gamma_{1/2,\bma}$ such that
\begin{align}
	&\tilde\bfR-\Re(w)>\tfrac{\e}{3},\quad\forall w\in\Gamma_{-1/2,-\bmb},\quad \Re(v)-\tilde \bfR>\tfrac\e3,\quad\forall v\in\Gamma_{1/2},\label{2.5}\\
	&\Re(w)+\tilde\bfR<-\tfrac\e3,\quad\forall w\in\Gamma_{-1/2},\quad \Re(v)+\tilde\bfR>\tfrac\e3,\quad\forall v\in\Gamma_{1/2,\bma},\label{2.6}\\
	&\Re(w)+\tilde\bfR<-\tfrac\e3,\quad\forall w\in\Gamma_{-1/2},\quad \Re(v)-\tilde\bfR>\tfrac\e3,\quad\forall v\in\Gamma_{1/2},\label{2.7}\\
	&\tilde\bfR-\Re(w)>\tfrac\e3,\quad\forall w\in\Gamma_{-1/2,-\bmb},\quad \Re(v)+\tilde\bfR>\tfrac\e3,\quad\forall v\in\Gamma_{\bma}\label{2.8}.
\end{align}

Next, we define the conjugation function.		
\begin{equation}
	T_{ij}(x,y):=\begin{cases}
		e^{\tilde{\bfR}(y-x)}, & \text{if } (i,j)=(1,1),\\
		e^{\tilde{\bfR}(x-y)}, & \text{if } (i,j)=(2,2),\\
		e^{\tilde{\bfR}(x+y)}, & \text{if } (i,j)=(2,1),\\
		e^{-\tilde{\bfR}(x+y)}, & \text{if } (i,j)=(1,2).
	\end{cases}
\end{equation}
\begin{lem}
	Let $K_{m,n,ij}^{\rm exp}$ be defined as in Theorem~\ref{thm:expKernel} with $\bma,\bmb>-\tfrac12+\e$. Then we have
	\begin{equation}
		|K_{m,n,ij}^{\rm exp}(x,y)T_{ij}(x,y)|\leq C\,e^{-\varepsilon(x+y)/3}
	\end{equation}
	for some constant $C\in\mathbb{R}_+$.
	
\end{lem}
\begin{proof}			
	\textbf{For $K_{m,n,11}^{\exp}(x,y)$.}Note that we can choose the contours $\Gamma_{-1/2,-\bmb}$ and $\Gamma_{1/2}$ such that
	\begin{equation}\label{up}
		\left|\left(\frac{1/2-w}{1/2-v}\right)^m \left(\frac{1/2+v}{1/2+w}\right)^n \frac{1}{v-w} \prod_{j=1}^{k} \frac{v+\beta_j}{w+\beta_j}\right| \leq C
	\end{equation}
	for some constant $C$, uniformly for all $w \in \Gamma_{-1/2,-\bmb}$ and $v \in \Gamma_{1/2}$. Hence, we obtain
	\begin{equation}
		\begin{aligned}
			|K_{m,n,11}^{\rm exp}(x,y) T_{11}(x,y)| &\leq C \oint_{\Gamma_{-1/2,-\bmb}} \mathrm{d} w \oint_{\Gamma_{1/2}} \mathrm{d} v \,
			e^{-(\tilde{\bfR}-\Re(w))x} e^{-(\Re(v)-\tilde{\bfR})y} \\
			&\stackrel{\eqref{2.5}}{\leq} C e^{-\varepsilon(x+y)/3}.
		\end{aligned}
	\end{equation}	
	\textbf{For $K_{m,n,22}^{\exp}(x,y)$.} Note that we can choose the contours $\Gamma_{-1/2}$ and $\Gamma_{1/2,\bma}$ such that
	\begin{equation}
		\left|	\left(\frac{1 / 2-w}{1 / 2-v}\right)^m\left(\frac{1/2 + v}{1/2 + w} \right)^{n} \frac{1}{v-w} \prod_{i=1}^{\ell} \frac{w - \alpha_i}{v - \alpha_i}\right|\leq C
	\end{equation}
	for some constant $C$, uniformly for all $w\in \Gamma_{-1/2}$ and $v\in\Gamma_{1/2,\bma}$. Therefore
	\begin{equation}
		\begin{aligned}
			|K_{m,n,22}^{\exp}(x,y)T_{22}(x,y)| &\leq C\oint_{\Gamma_{-1/2}} \dx w\oint_{\Gamma_{1/2,\bma}} \dx v  \,
			e^{(\tilde\bfR+\Re(w))x}e^{-(\Re(v)+\tilde\bfR)y}\\
			&\stackrel{\eqref{2.6}}{\leq} Ce^{-\e(x+y)/3}.
		\end{aligned}
	\end{equation}  	
	\textbf{For $K_{m,n,21}^{\exp}(x,y)$.} Note that we can choose the contours $\Gamma_{-1/2}$ and $\Gamma_{1/2}$ such that
	\begin{equation}
		\left|	\left(\frac{1 / 2-w}{1 / 2-v}\right)^m\left(\frac{1/2 + v}{1/2 + w} \right)^{n} \frac{1}{v-w} \prod_{j=1}^{k} (v + \beta_j) \prod_{i=1}^{\ell} (w - \alpha_i)\right|\leq C
	\end{equation}
	for some constant $C$, uniformly for all $w\in \Gamma_{-1/2}$ and $v\in\Gamma_{1/2}$. Thus, we obtain
	\begin{equation}
		\begin{aligned}
			|K_{m,n,21}^{\exp}(x,y)T_{21}(x,y)|&\leq C\oint_{\Gamma_{-1/2}} \dx w\oint_{\Gamma_{1/2}} \dx v  \,
			e^{(\Re(w)+\tilde\bfR)x}e^{-(\Re(v)-\tilde\bfR)y}\\
			&\stackrel{\eqref{2.7}}{\leq}Ce^{-\e(x+y)/3}.
		\end{aligned}
	\end{equation}
	\textbf{For $K_{m,n,12}^{\exp}(x, y)$.} Without loss of generality, we consider $x>y$, which implies
	\begin{equation}
		\begin{aligned}
			K_{m,n,12}^{\rm exp}(x, y) &:= \frac{-1}{(2\pi \mathrm{i})^2}
			\oint_{\Gamma_{-1/2,-\bmb}} \hspace{-1em} \mathrm{d} w \oint_{\Gamma_{1/2,\bma,w}} \hspace{-1em} \mathrm{d} v \,
			\frac{E(x, w; y, v)}{v-w} \frac{1}{\prod_{j=1}^{k} (w + \beta_j) \prod_{i=1}^{\ell} (v - \alpha_i)}.
		\end{aligned}
	\end{equation}
	Applying Cauchy's residue theorem, we obtain
	\begin{equation}
		\begin{aligned}
			K_{m,n,12}^{\exp}(x, y) &:= \frac{-1}{(2\pi\I)^2}
			\oint_{\Gamma_{-1/2,-\bmb}} \hspace{-1em}\dx w\oint_{\Gamma_{1/2,\bma,w}} \hspace{-1em}\dx v
			\,\frac{E(x, w; y, v)}{v-w}\frac{1}{ \prod_{j=1}^{k} (w + \beta_j) \prod_{i=1}^{\ell} (v - \alpha_i) }\\
			&=\frac{-1}{(2\pi\I)^2}
			\oint_{\Gamma_{-1/2,-\bmb}} \hspace{-1em}\dx w\oint_{\Gamma_{1/2}} \dx v
			\,\frac{E(x, w; y, v)}{v-w}\frac{1}{ \prod_{j=1}^{k} (w + \beta_j) \prod_{i=1}^{\ell} (v - \alpha_i) }\\
			&+\frac{-1}{(2\pi\I)^2}
			\oint_{\Gamma_{-1/2,-\bmb}} \hspace{-1em}\dx w\oint_{\Gamma_{\bma}} \dx v
			\,\frac{E(x, w; y, v)}{v-w}\frac{1}{ \prod_{j=1}^{k} (w + \beta_j) \prod_{i=1}^{\ell} (v - \alpha_i) }\\
			&+\underbrace{\frac{-1}{(2\pi\I)^2}\oint_{\Gamma_{-\bmb}} \dx w\frac{e^{w(x-y)}}{ \prod_{j=1}^{k} (w + \beta_j) \prod_{i=1}^{\ell} (w - \alpha_i) }}_{=:K(x,y)}.
		\end{aligned}
	\end{equation}
	For the first two terms on the right hand side of the second equation, we can do the same as we did before and then apply \eqref{2.8} to obtain that the absolute value of the sum of those two is bounded by $Ce^{-\e(x+y)/2}$. As for the last term, we can choose $\Gamma_{-\bmb}$ such that
	\begin{equation}
		\left|\frac{1}{ \prod_{j=1}^{k} (w + \beta_j) \prod_{i=1}^{\ell} (w - \alpha_i) }\right|\leq C.
	\end{equation}
	Thus, we have
	\begin{equation}
		\begin{aligned}
			\left|K(x,y)T_{12}(x,y)\right|\leq &\oint_{\Gamma_{-\bmb}} \dx w e^{w(x-y)} e^{-\tilde\bfR(x+y)}\\
			=&C\sum_{\gamma\in\{\beta_i\mid i\in\llbracket k\rrbracket\}}\oint_{\Gamma_{-\gamma}} \dx w e^{w(x-y)} e^{-\tilde\bfR(x+y)}.\\
		\end{aligned}
	\end{equation}
	For $\gamma\in\{\beta_i\mid i\in\llbracket k\rrbracket\}$, if $\gamma\geq0$, then we can choose $\Gamma_{-\gamma}$ such that $\Re(w)<0$ for all $w\in\Gamma_{-\gamma}$, together with the assumption $x>y$ and $\tilde\bfR>\tfrac\e3$  (for $\e$ small), we have
	\begin{equation}
		\left|\oint_{\Gamma_{-\gamma}} \dx w e^{w(x-y)} e^{-\tilde\bfR(x+y)}\right|\leq Ce^{-\e(x+y)/3}.
	\end{equation}
	If $0>\gamma>-\tfrac12$, then we have
	\begin{equation}
		\begin{aligned}
			&\oint_{\Gamma_{-\gamma}} \dx w e^{(\Re(w)-\tilde\bfR)x} e^{-(\Re(w)+\tilde\bfR)y}\\
		\end{aligned}
	\end{equation}
	By definition of $\tilde\bfR$, we can choose $\Gamma_{-\gamma}$ such that $\Re(w)-\tilde\bfR<-\tfrac\e3$ and $\Re(w)+\tilde\bfR>\tfrac\e3$ for all $w\in\Gamma_{-\gamma}$, which then implies
	\begin{equation}
		\left|\oint_{\Gamma_{-\gamma}} \dx w e^{w(x-y)} e^{-\tilde\bfR(x+y)}\right|\leq Ce^{-\e(x+y)/3}.
	\end{equation}
	To conclude, also for this entry of the kernel, we have
	\begin{equation}
		|K^{\exp}_{m,n,12}(x,y)T_{12}(x,y)|\leq Ce^{-\e(x+y)/3}.
	\end{equation}

\end{proof}


\begin{thebibliography}{100}
	
	\bibitem{FAvM08}
	M.~Adler, P.L. Ferrari, and P.~van Moerbeke, \emph{Airy processes with
		wanderers and new universality classes}, Ann. Probab. \textbf{38} (2008),
	714--769.
	
	\bibitem{AvMW13}
	M.~Adler, P.~van Moerbeke, and D.~Wang, \emph{Random matrix minor processes
		related to percolation theory}, Random Matrices Theory Appl. \textbf{2}
	(2013), no.~4, 1350008, 72.
	
	\bibitem{Agg16}
	A.~Aggarwal, \emph{{Current Fluctuations of the Stationary ASEP and Six-Vertex
			Model}}, Duke Math. J. \textbf{167} (2018), 269--384.
	
	\bibitem{ACH24}
	A.~Aggarwal, I.~Corwin, and M.~Hegde, \emph{{Scaling limit of the colored ASEP
			and stochastic six-vertex models}}, arXiv:2403.01341 (2024).
	
	\bibitem{AGZ10}
	G.~Anderson, A.~Guionnet, and O.~Zeitouni, \emph{{An Introduction to Random
			Matrices}}, Cambridge University Press, Cambridge, 2010.
	
	\bibitem{Arai20}
	Y.~Arai, \emph{The {KPZ} fixed point for discrete time {TASEP}s}, J. Phys. A
	\textbf{53} (2020), no.~41, 415202, 33.
	
	\bibitem{Assi20}
	T.~Assiotis, \emph{Determinantal structures in space-inhomogeneous dynamics on
		interlacing arrays}, Ann. Henri Poincar\'e \textbf{21} (2020), no.~3,
	909--940.
	
	\bibitem{ABC12}
	A.~Auffinger, J.~Baik, and I.~Corwin, \emph{Universality for directed polymers
		in thin rectangles}, arXiv:1204.4445 (2012).
	
	\bibitem{BBP06}
	J.~Baik, G.~{Ben Arous}, and S.~P\'ech\'e, \emph{Phase transition of the
		largest eigenvalue for non-null complex sample covariance matrices}, Ann.
	Probab. \textbf{33} (2006), 1643--1697.
	
	\bibitem{BFP09}
	J.~Baik, P.L. Ferrari, and S.~P{\'e}ch{\'e}, \emph{{Limit process of stationary
			TASEP near the characteristic line}}, Comm. Pure Appl. Math. \textbf{63}
	(2010), 1017--1070.
	
	\bibitem{BL13}
	J.~Baik and Z.~Liu, \emph{{On the average of the Airy process and its time
			reversal}}, Electron. Commun. Probab. \textbf{18} (2013), 1--10.
	
	\bibitem{BR00}
	J.~Baik and E.M. Rains, \emph{Limiting distributions for a polynuclear growth
		model with external sources}, J. Stat. Phys. \textbf{100} (2000), 523--542.
	
	\bibitem{BCS06}
	M.~Bal{\'a}zs, E.~Cator, and T.~Sepp{\"a}l{\"a}inen, \emph{Cube root
		fluctuations for the corner growth model associated to the exclusion
		process}, Electron. J. Probab. \textbf{11} (2006), 1094--1132.
	
	\bibitem{Bar01}
	Y.~Baryshnikov, \emph{{{\rm GUE}s and queues}}, Probab. Theory Relat. Fields
	\textbf{119} (2001), 256--274.
	
	\bibitem{BGZ19}
	R.~Basu, S.~Ganguly, and L.~Zhang, \emph{Temporal correlation in last passage
		percolation with flat initial condition via {Brownian} comparison}, Commun.
	Math. Phys. \textbf{383} (2021), no.~3, 1805--1888.
	
	\bibitem{BC09}
	G.~{Ben Arous} and I.~Corwin, \emph{{Current fluctuations for TASEP: a proof of
			the Pr\"ahofer-Spohn conjecture}}, Ann. Probab. \textbf{39} (2011), 104--138.
	
	\bibitem{BFC96}
	I.~Benjamini, P.~A. Ferrari, and C.~Landim, \emph{Asymmetric conservative
		processes with random rates}, Stochastic Process. Appl. \textbf{61} (1996),
	no.~2, 181--204.
	
	\bibitem{BFO19}
	D.~Betea, P.L. Ferrari, and A.~Occelli, \emph{Stationary half-space last
		passage percolation}, Commun. Math. Phys. \textbf{377} (2020), 421--467.
	
	\bibitem{BFO20}
	D.~Betea, P.L. Ferrari, and A.~Occelli, \emph{{The half-space Airy stat
			process}}, Stoch. Process. Appl. \textbf{146} (2022), 207--263.
	
	\bibitem{BHA20}
	M.~Bhatia, \emph{{Moderate deviation and exit time estimates for stationary
			last passage percolation}}, J. Stat. Phys. \textbf{181} (2020), 1410--1432.
	
	\bibitem{BLSZ23}
	E.~Bisi, Y.~Liao, A.~Saenz, and N.~Zygouras, \emph{Non-intersecting path
		constructions for {TASEP} with inhomogeneous rates and the {KPZ} fixed
		point}, Comm. Math. Phys. \textbf{402} (2023), no.~1, 285--333.
	
	\bibitem{BV13}
	A.~Bloemendal and B.~Vir\'ag, \emph{Limits of spiked random matrices {I}},
	Probab. Theory Related Fields \textbf{156} (2013), no.~3-4, 795--825.
	
	\bibitem{BV16}
	A.~Bloemendal and B.~Vir\'ag, \emph{Limits of spiked random matrices {II}},
	Ann. Probab. \textbf{44} (2016), no.~4, 2726--2769.
	
	\bibitem{Born08}
	F.~Bornemann, \emph{{On the numerical evaluation of Fredholm determinants}},
	Math. Comput. \textbf{79} (2009), 871--915.
	
	\bibitem{BCFV14}
	A.~Borodin, I.~Corwin, P.L. Ferrari, and B.~Vet{\H o}, \emph{{Height
			fluctuations for the stationary KPZ equation}}, Math. Phys. Anal. Geom. \textbf{18} (2015), 20.
	
	\bibitem{BF08}
	A.~Borodin and P.L. Ferrari, \emph{{Anisotropic Growth of Random Surfaces in
			$2+1$ Dimensions}}, Commun. Math. Phys. \textbf{325} (2014), 603--684.
	
	\bibitem{BFP06}
	A.~Borodin, P.L. Ferrari, and M.~Pr{\"a}hofer, \emph{{Fluctuations in the
			discrete TASEP with periodic initial configurations and the Airy$_1$
			process}}, Int. Math. Res. Papers \textbf{2007} (2007), rpm002.
	
	\bibitem{BFPS06}
	A.~Borodin, P.L. Ferrari, M.~Pr{\"a}hofer, and T.~Sasamoto, \emph{{Fluctuation
			properties of the TASEP with periodic initial configuration}}, J. Stat. Phys.
	\textbf{129} (2007), 1055--1080.
	
	\bibitem{BFS07}
	A.~Borodin, P.L. Ferrari, and T.~Sasamoto, \emph{{Transition between Airy$_1$
			and Airy$_2$ processes and TASEP fluctuations}}, Comm. Pure Appl. Math.
	\textbf{61} (2008), 1603--1629.
	

    \bibitem{BG16}
	A.~Borodin and V.~Gorin, \emph{{Lectures on integrable probability}},
	in \emph{{Probability and Statistical Physics in St. Petersburg}},
	Proc. Sympos. Pure Math. \textbf{91} (2016), 155--214.
	
	\bibitem{BP07}
	A.~Borodin and S.~P\'ech\'e, \emph{{Airy Kernel with Two Sets of Parameters in
			Directed Percolation and Random Matrix Theory}}, J. Stat. Phys. \textbf{132}
	(2008), 275--290.
	
	\bibitem{RB04}
	A.~Borodin and E.M. Rains, \emph{{Eynard-Mehta theorem, Schur process, and
			their Pfaffian analogs}}, J. Stat. Phys. \textbf{121} (2006), 291--317.
	
	\bibitem{CFS16}
	S.~Chhita, P.L. Ferrari, and H.~Spohn, \emph{{Limit distributions for KPZ
			growth models with spatially homogeneous random initial conditions}}, Ann.
	Appl. Probab. \textbf{28} (2018), 1573--1603.
	
	\bibitem{Cor11}
	I.~Corwin, \emph{{The Kardar-Parisi-Zhang equation and universality class}},
	Random Matrices: Theory Appl. \textbf{01} (2012), 1130001.
	
	\bibitem{CLW16}
	I.~Corwin, Z.~Liu, and D.~Wang, \emph{{Fluctuations of TASEP and LPP with
			general initial data}}, Ann. Appl. Probab. \textbf{26} (2016), 2030--2082.
	
	\bibitem{DOV22}
	D.~Dauvergne, J.~Ortmann, and B.~Vir\'ag, \emph{The directed landscape}, Acta
	Mathematica \textbf{229} (2022), 201--285.
	
	\bibitem{DV21}
	D.~Dauvergne and B.~Virág, \emph{{The scaling limit of the longest increasing
			subsequence}}, arXiv:2104.08210 (2021).
	
	\bibitem{Def08}
	M.~Defosseux, \emph{{Orbit measures, random matrix theory and interlaced
			determinantal processes}}, Ann. Inst. H. Poincar\'e Probab. Statist.
	\textbf{46} (2010), 209--249.
	
	\bibitem{DW07}
	A.B. Dieker and J.~Warren, \emph{Determinantal transition kernels for some
		interacting particles on the line}, Ann. Inst. H. Poincar\'e Probab. Statist.
	\textbf{44} (2008), 1162--1172.
	
	\bibitem{DW08}
	A.B. Dieker and J.~Warren, \emph{On the largest-eigenvalue process for
		generalized wishart random matrices}, Alea \textbf{6} (2009), 369--376.
	
    \bibitem{Dim26}
	E.~Dimitrov, \emph{{Airy wanderer line ensembles}},
	J. Lond. Math. Soc. (2) \textbf{113} (2026), no.~3, Paper No.~e70482, 92 pp.
	
	\bibitem{E16PhD}
	E.~Emrah, \emph{Limit shape and fluctuations for exactly solvable inhomogeneous
		corner growth models}, ProQuest LLC, Ann Arbor, MI, 2016, Thesis (Ph.D.)--The
	University of Wisconsin - Madison.
	
	\bibitem{E16}
	E.~Emrah, \emph{Limit shapes for inhomogeneous corner growth models with
		exponential and geometric weights}, Electron. Commun. Probab. \textbf{21}
	(2016), Paper No. 42, 16.
	
	\bibitem{FK2/5PUB1P_2025}
	E.~Emrah, P.L. Ferrari, and M.~Liu, \emph{{Critical fluctuations of the
			exponential last passage percolation with thick boundaries}},
	\url{https://doi.org/10.60507/FK2/5PUB1P}, 2025.
	
	\bibitem{EJ17}
	E.~Emrah and C.~Janjigian, \emph{Large deviations for some corner growth models
		with inhomogeneity}, Markov Process. Related Fields \textbf{23} (2017),
	no.~2, 267--312.
	
	\bibitem{EJS21}
	E.~Emrah, C.~Janjigian, and T.~Sepp\"al\"ainen, \emph{Flats, spikes and
		crevices: the evolving shape of the inhomogeneous corner growth model},
	Electron. J. Probab. \textbf{26} (2021), Paper No. 33, 45.
	
	\bibitem{EJS23}
	E.~Emrah, C.~Janjigian, and T.~Sepp\"al\"ainen, \emph{Optimal-order exit point
		bounds in exponential last-passage percolation via the coupling technique},
	Probab. Math. Phys. \textbf{4} (2023), no.~3, 609--666.
	
	\bibitem{EJS25}
	E.~Emrah, C.~Janjigian, and T.~Sepp\"al\"ainen, \emph{{Anomalous geodesics in
			the inhomogeneous corner growth model}}, Commun. Math. Phys. \textbf{406} (2025), 316.
	
	\bibitem{FF94}
	P.A. Ferrari and L.~Fontes, \emph{{Current fluctuations for the asymmetric
			simple exclusion process}}, Ann. Probab. \textbf{22} (1994), 820--832.
	
	\bibitem{Fer10b}
	P.L. Ferrari, \emph{{From interacting particle systems to random matrices}}, J.
	Stat. Mech. (2010), P10016.
	
	\bibitem{FO18}
	P.L. Ferrari and A.~Occelli, \emph{Time-time covariance for last passage
		percolation with generic initial profile}, Math. Phys. Anal. Geom.
	\textbf{22} (2019), 1.
	
	\bibitem{FS05a}
	P.L. Ferrari and H.~Spohn, \emph{Scaling limit for the space-time covariance of
		the stationary totally asymmetric simple exclusion process}, Commun. Math.
	Phys. \textbf{265} (2006), 1--44.
	
	\bibitem{FS10}
	P.L. Ferrari and H.~Spohn, \emph{{Random Growth Models}}, The Oxford handbook
	of random matrix theory (G.~Akemann, J.~Baik, and P.~{Di Francesco}, eds.),
	Oxford Univ. Press, Oxford, 2011, pp.~782--801.
	
	\bibitem{FSW15}
	P.L. Ferrari, H.~Spohn, and T.~Weiss, \emph{{Brownian motions with one-sided
			collisions: the stationary case}}, Electron. J. Probab. \textbf{20} (2015),
	1--41.
	
	\bibitem{GW91}
	P.~Glynn and W.~Whitt, \emph{Departures from many queues in series}, Ann. Appl.
	Probab. \textbf{1} (1991), 546--572.
	
	\bibitem{GTW02b}
	J.~Gravner, C.~Tracy, and H.~Widom, \emph{Fluctuations in the composite regime
		of a disordered growth model}, Comm. Math. Phys. \textbf{229} (2002), no.~3,
	433--458.
	
	\bibitem{GTW02}
	J.~Gravner, C.~Tracy, and H.~Widom, \emph{A growth model in a random
		environment}, Ann. Probab. \textbf{30} (2002), no.~3, 1340--1368.
	
	\bibitem{GTW00}
	J.~Gravner, C.A. Tracy, and H.~Widom, \emph{Limit theorems for height
		fluctuations in a class of discrete space and time growth models}, J. Stat.
	Phys. \textbf{102} (2001), 1085--1132.
	
	\bibitem{I10}
	J.~Ibrahim, \emph{{Large deviations for directed percolationon a thin
			rectangle}}, ESAIM: Probability and Statistics \textbf{15} (2011), 217--232.
	
	\bibitem{IMS19}
	T.~Imamura, M.~Muccioconi, and T.~Sasamoto, \emph{{Stationary Stochastic Higher
			Spin Six Vertex Model and $q$-Whittaker measure}}, Probab. Theory Relat. Fields \textbf{177} (2020), 923--1042.
	
	\bibitem{SI04}
	T.~Imamura and T.~Sasamoto, \emph{Fluctuations of the one-dimensional
		polynuclear growth model with external sources}, Nucl. Phys. B \textbf{699}
	(2004), 503--544.
	
	\bibitem{IS07}
	T.~Imamura and T.~Sasamoto, \emph{Dynamics of a tagged particle in the
		asymmetric exclusion process with the step initial condition}, J. Stat. Phys.
	\textbf{128} (2007), no.~4, 799--846.
	
	\bibitem{SI17b}
	T.~Imamura and T.~Sasamoto, \emph{{Free energy distribution of the stationary
			O'Connell–Yor directed random polymer model}}, J. Phys. A: Math. Theor.
	\textbf{50} (2017), 285203.
	
	\bibitem{SI17}
	T.~Imamura and T.~Sasamoto, \emph{{Fluctuations for stationary $q$-TASEP}},
	Probab. Theory Relat. Fields \textbf{174} (2019), 647--730.
	
	\bibitem{Jo00b}
	K.~Johansson, \emph{Shape fluctuations and random matrices}, Commun. Math.
	Phys. \textbf{209} (2000), 437--476.
	
	\bibitem{Jo03b}
	K.~Johansson, \emph{Discrete polynuclear growth and determinantal processes},
	Commun. Math. Phys. \textbf{242} (2003), 277--329.
	
	\bibitem{Jo03}
	K.~Johansson, \emph{The arctic circle boundary and the {Airy} process}, Ann.
	Probab. \textbf{33} (2005), 1--30.
	
	\bibitem{Joh08}
	K.~Johansson, \emph{On some special directed last-passage percolation models},
	Integrable systems and random matrices, Contemp. Math., vol. 458, Amer. Math.
	Soc., Providence, RI, 2008, pp.~333--346.
	
	\bibitem{JR22}
	K.~Johansson and M.~Rahman, \emph{On inhomogeneous polynuclear growth}, Ann.
	Probab. \textbf{50} (2022), no.~2, 559--590.
	
	\bibitem{KPZ86}
	M.~Kardar, G.~Parisi, and Y.Z. Zhang, \emph{Dynamic scaling of growing
		interfaces}, Phys. Rev. Lett. \textbf{56} (1986), 889--892.
	
	\bibitem{Kar07}
	N.~El Karoui, \emph{Tracy-{W}idom limit for the largest eigenvalue of a large
		class of complex sample covariance matrices}, Ann. Probab. \textbf{35}
	(2007), no.~2, 663--714.
	
	\bibitem{KPS19}
	A.~Knizel, L.~Petrov, and A.~Saenz, \emph{Generalizations of {TASEP} in
		discrete and continuous inhomogeneous space}, Comm. Math. Phys. \textbf{372}
	(2019), no.~3, 797--864.
	
	\bibitem{KLO21}
	A.~Krajenbrink, P.~Le Doussal, and N.~O'Connell, \emph{Tilted elastic lines
		with columnar and point disorder, non-{H}ermitian quantum mechanics, and
		spiked random matrices: pinning and localization}, Phys. Rev. E \textbf{103}
	(2021), no.~4, Paper No. 042120, 37.
	
	\bibitem{LR09}
	M.~Ledoux and B.~Rider, \emph{Small deviations for beta ensembles}, Electron.
	J. Probab. \textbf{15} (2010), 1319--1343.
	
	\bibitem{L18}
	Z.~Liu, \emph{Height fluctuations of stationary {TASEP} on a ring in relaxation
		time scale}, Ann. Inst. Henri Poincar\'e{} Probab. Stat. \textbf{54} (2018),
	no.~2, 1031--1057.
	
	\bibitem{MQR17}
	K.~Matetski, J.~Quastel, and D.~Remenik, \emph{{The KPZ fixed point}}, Acta
	Math. \textbf{227} (2021), 115--203.
	
	\bibitem{NS04}
	T.~Nagao and T.~Sasamoto, \emph{Asymmetric simple exclusion process and
		modified random matrix ensembles}, Nucl. Phys. B \textbf{699} (2004),
	487--502.
	
	\bibitem{NQR20}
	M.~Nica, J.~Quastel, and D.~Remenik, \emph{One-sided reflected {B}rownian
		motions and the {KPZ} fixed point}, Forum Math. Sigma \textbf{8} (2020),
	Paper No. e63, 16.
	
	\bibitem{OCY01}
	N.~O'Connell and M.~Yor, \emph{{Brownian analogues of Burke's theorem}}, Stoch.
	Proc. Appl. \textbf{96} (2001), 285--304.
	
	\bibitem{Ok01}
	A.~Okounkov, \emph{Infinite wedge and random partitions}, Selecta Math.
	\textbf{7} (2001), 57--81.
	
	\bibitem{OR01}
	A.~Okounkov and N.~Reshetikhin, \emph{Correlation function of {S}chur process
		with application to local geometry of a random 3-dimensional {Y}oung
		diagram}, J. Amer. Math. Soc. \textbf{16} (2003), 581--603.
	
	\bibitem{Petr20}
	L.~Petrov, \emph{Push{TASEP} in inhomogeneous space}, Electron. J. Probab.
	\textbf{25} (2020), Paper No. 114, 25.
	
	\bibitem{PSKPZ}
	M.~Pr{\"a}hofer, \emph{Exact scaling function for one-dimensional stationary
		{KPZ} growth}, \texttt{http://www-m5.ma.tum.de/KPZ/} (2002).
	
	\bibitem{PS01}
	M.~Pr{\"a}hofer and H.~Spohn, \emph{Current fluctuations for the totally
		asymmetric simple exclusion process}, In and out of equilibrium
	(V.~Sidoravicius, ed.), Progress in Probability, Birkh{\"a}user, 2002.
	
	\bibitem{PS02}
	M.~Pr{\"a}hofer and H.~Spohn, \emph{Scale invariance of the {PNG} droplet and
		the {A}iry process}, J. Stat. Phys. \textbf{108} (2002), 1071--1106.
	
	\bibitem{PS02b}
	M.~Pr{\"a}hofer and H.~Spohn, \emph{Exact scaling function for one-dimensional
		stationary {KPZ} growth}, J. Stat. Phys. \textbf{115} (2004), 255--279.
	
	\bibitem{Prol16}
	S.~Prolhac, \emph{Finite-time fluctuations for the totally asymmetric exclusion
		process}, Phys. Rev. Lett. \textbf{116} (2016), 090601.
	
	\bibitem{Qua11}
	J.~Quastel, \emph{{Introduction to KPZ}}, Current Developments in Mathematics
	(2011), 125--194.
	
	\bibitem{QR13}
	J.~Quastel and D.~Remenik, \emph{{Airy processes and variational problems}},
Topics in Percolative and Disordered Systems (New York, NY) (A.F. Ram\'irez, G. Ben
Arous, P.A. Ferrari, C.M. Newman, V. Sidoravicius, and M.E. Vares, eds.),
Springer New York, 2014, pp. 121–171.
	
	\bibitem{QR16}
	J.~Quastel and D.~Remenik, \emph{{How flat is flat in a random interface
			growth?}}, Trans. Amer. Math. Soc. \textbf{371} (2019), 6047--6085.
	
	\bibitem{QS20}
	J.~Quastel and S.~Sarkar, \emph{{Convergence of exclusion processes and KPZ
			equation to the KPZ fixed point}}, J. Amer. Math. Soc. \textbf{36} (2022),
	251--289.
	
	\bibitem{QS15}
	J.~Quastel and H.~Spohn, \emph{{The one-dimensional KPZ equation and its
			universality class}}, J. Stat. Phys. \textbf{160} (2015), 965--984.
	
	\bibitem{Rah25}
	M.~Rahman, \emph{{Revisiting the temporal law in KPZ random growth}},
	arXiv:2504.19975 (2025).
	
	\bibitem{RS06}
	A.~R{\'a}kos and G.M. Sch{\"u}tz, \emph{{Bethe Ansatz and current distribution
			for the TASEP with particle-dependent hopping rates}}, Markov Process.
	Related Fields \textbf{12} (2006), 323--334.
	
	\bibitem{SI03}
	T.~Sasamoto and T.~Imamura, \emph{Fluctuations of a one-dimensional polynuclear
		growth model in a half space}, J. Stat. Phys. \textbf{115} (2004), 749--803.
	
	\bibitem{SK99}
	T.~Sepp\"al\"ainen and J.~Krug, \emph{Hydrodynamics and platoon formation for a
		totally asymmetric exclusion model with particlewise disorder}, J. Statist.
	Phys. \textbf{95} (1999), no.~3-4, 525--567.
	
	\bibitem{SS20}
	T.~Sepp\"al\"ainen and X.~Shen, \emph{Coalescence estimates for the corner
		growth model with exponential weights}, Electron. J. Probab. \textbf{25}
	(2020), Paper No. 85, 31, Corrected version arXiv:1911.03792.
	
	\bibitem{Tak16}
	K.A. Takeuchi, \emph{{An appetizer to modern developments on the
			Kardar–Parisi–Zhang universality class}}, Physica A \textbf{504} (2016),
	77--105.
	
	\bibitem{TW94}
	C.A. Tracy and H.~Widom, \emph{{Level-spacing distributions and the Airy
			kernel}}, Commun. Math. Phys. \textbf{159} (1994), 151--174.
	
	\bibitem{Vir20}
	B.~Vir{\`a}g, \emph{{The heat and the landscape I}}, arXiv:2008.07241 (2020).
	
	\bibitem{Wu23}
	X.~Wu, \emph{{The KPZ equation and the directed landscape}}, Ann. Probab. \textbf{54} (2026), 121-1257.
	
	\bibitem{Ze25}
	J.~Zeng, \emph{Stationary half-space geometric last passage percolation},
	arXiv:2507.11887 (2025).
	
	\bibitem{Zyg18}
	N.~Zygouras, \emph{{Some algebraic structures in the KPZ universality}}, Probab. Surv. \textbf{19} (2022), 590--700.
	
\end{thebibliography}
\end{document}